\def\@Rref#1{\hbox{\rm \ref{#1}}}
\def\Rref#1{\@Rref{#1}}
\theoremstyle{plain}
\newtheorem{theorem}{Theorem}[section]
\newtheorem{proposition}[theorem]{Proposition}
\newtheorem{assumption}[theorem]{Assumption}
\newtheorem{lemma}[theorem]{Lemma}
\newtheorem{corollary}[theorem]{Corollary}
\theoremstyle{definition}
\newtheorem{definition}{Definition}[section]
\newtheorem{example}[definition]{Example}
\newtheorem{remark}[definition]{Remark}
\newcommand{\sr}{\mathop{\rm r}}
\newcommand{\Ran}{\mathop{\rm Ran}\nolimits}
\newcommand{\Ker}{\mathop{\rm Ker}\nolimits}
\newcommand{\is}{\ensuremath{\mathrm{is}}}
\newcommand{\vertiii}[1]{{\vert\kern-0.25ex\vert\kern-0.25ex\vert #1 
		\vert\kern-0.25ex\vert\kern-0.25ex\vert}}
\newcommand{\biggvertiii}[1]{{\bigg\vert\kern-0.35ex\bigg\vert\kern-0.35ex\bigg\vert #1 
		\bigg\vert\kern-0.35ex\bigg\vert\kern-0.35ex\bigg\vert}}
\begin{document}

\title[Informativity for stabilization]{Data informativity for stabilization of discrete-time
	infinite-dimensional systems}

\thispagestyle{plain}

\author{Masashi Wakaiki}
\address{Graduate School of System Informatics, Kobe University, Nada, Kobe, Hyogo 657-8501, Japan}
 \email{wakaiki@ruby.kobe-u.ac.jp}
 \thanks{This work was supported in part by 
 	JSPS KAKENHI Grant Number 24K06866.}

\begin{abstract}
This paper develops a data-driven framework for
stabilization of 
discrete-time infinite-dimensional systems.
We investigate informativity for stabilization, 
defined as
the existence of a feedback gain that stabilizes
all systems
compatible with the available input-state data.
Assuming that infinite-length data are Bessel sequences, we first establish a sufficient condition 
for data informativity in the noise-free case.
We next show that
this sufficient condition is also 
necessary under 
a mild data assumption when
the input space is one-dimensional.
Furthermore, if the state sequence forms a frame,
then the sufficient condition can be extended to
the case of noisy data.
Finally, when the unstable part of the true system
is known to be finite-dimensional,
we derive a necessary and sufficient condition
for data informativity of finite-length data.
\end{abstract}

\keywords{Data-driven control,
	infinite-dimensional systems, stabilization.} 

\maketitle

\section{Introduction}
Model-based control is the standard paradigm 
for infinite-dimensional systems. It typically proceeds by first-principles modeling combined with
parameter identification; 
see, e.g., \cite{Demetriou1994,Orlov2000,Bin2016,Boiger2016,Mauroy2021,Chattopadhyay2025} and references therein for parameter-identification techniques for infinite-dimensional systems.
As an alternative to model-based control, 
data-driven control has rapidly matured in the finite-dimensional setting.
In this approach, system analysis and controller design are carried out directly from measured data, without constructing
mathematical models.
A powerful technique for data-driven control of infinite-dimensional systems is the Koopman approach \cite{Deutscher2025,Deutscher2025Automatica,
	Deutscher2025semilinear}, 
which relies on spectral analysis
of Koopman operators for 
partial differential equations overviewed in \cite{Nakao2020}.
Here we pursue a different route and work within the
informativity framework established for finite-dimensional systems in \cite{Waarde2020}.
Under this framework, we consider the set of
systems compatible with the available data. Then we 
examine whether every system
in the set satisfies the desired property, and 
design controllers achieving the control objective for 
all systems in the set. 
Our focus is on informativity for stabilization, i.e.,
the existence of a state feedback gain that stabilizes every 
system compatible with the input-state data.
This concept was introduced for finite-dimensional systems in \cite{Waarde2020}, where
necessary and 
sufficient conditions were also derived.
Subsequent work~\cite{Waarde2023SIAM,Bisoffi2024,Kaminaga2025} addressed informativity for stabilization in the presence of noise.
The informativity approach was also applied to
analysis of controllability \cite{Waarde2020} and
dissipativity~\cite{Waarde2022}, output regulation~\cite{Trentelman2021,Zhu2024},
model reduction~\cite{Burohman2023,
	Burohman2024, Ackermann2025}, and so on.
For a broader overview of data informativity, 
we refer the reader to the survey article \cite{Waarde2023}.

Let $X$ and $U$ be Hilbert spaces, and 
consider the following discrete-time infinite-dimensional system
with state space $X$ and input space $U$:
\[
x(k+1) = Ax(k) + Bu(k),\quad k \geq 0;\qquad 
x(0) = x_{\mathrm{ini}} \in X,
\]
where $A\colon X \to X$ and $B\colon U \to X$ are 
bounded linear operators.
Although $A$ and $B$ are unknown,
data on the state $x$ and the input $u$
are available.
A key distinction in the infinite-dimensional case is
that data of infinite length are required in general.
Such data sequences may be unbounded and therefore
cannot be handled
in the framework of bounded operators.
To circumvent this issue, we impose a
boundedness condition on the given data, i.e.,
we assume that
the data form Bessel sequences; 
see Definition \ref{def:Bessel} for the definition of
Bessel sequences.
This condition guarantees that
the synthesis operators associated with the data 
are bounded.
The synthesis operators generalize the matrices whose columns are the data vectors, and therefore play the same central role as the data matrices in the finite-dimensional setting.

We begin by deriving a sufficient condition for the given data
to be informative for stabilization.
When the input space $U$
is one-dimensional,
we show that 
this condition is also necessary under some mild
data assumption. Moreover, the 
structure of stabilizing gains is presented.
As in the finite-dimensional case, the 
condition is 
stated by using the range of the synthesis operator.
Unlike the range of a matrix, the range of 
an operator is not closed in general, which introduces technical difficulties.
To overcome the difficulties, we exploit the
prior information that the system operators $A$ and $B$ are bounded, together with standard results on orthogonal complements.
As a consequence,
the range of the synthesis operator associated with the state data need not coincide with $X$.
It suffices for the range to be dense in $X$.

A simple example shows that
the obtained condition 
is not robust to measurement noise.
Indeed, arbitrarily small noise causes the condition to fail;
see the first paragraph of Section~\ref{sec:noise}
for details.
Motivated by this observation, 
we investigate a sufficient condition for
informativity for stabilization in the presence of noise.
The key assumption in the infinite-dimensional setting 
is that the state sequence satisfies
a certain lower estimate and consequently
forms a frame for $X$;
see Definition~\ref{def:frame} for the definition of frames.
Frame theory has undergone extensive development, 
largely motivated by its applications in wavelet analysis; see, e.g., \cite[Chapter~5]{Mallat2008}.
Frames and Bessel sequences have also been employed
in the dynamical sampling problem, where
one studies whether families of operator iterates
$\{A^{k}x:x \in \Omega \text{~and~} 0\leq k \leq L(x)\}$ with $\Omega \subseteq X$ and $L(x) \in \mathbb{N}$
form frames; see, e.g., \cite{Aldroubi2017,Aldroubi2017JFA,
	Christensen2024}.
The frame-based condition on the data resembles
the necessary and sufficient condition 
for informativity for stabilization 
in the noise-free setting
for
finite-dimensional system
established in \cite{Waarde2020}.
However, the frame-based condition
is only sufficient in
the infinite-dimensional case.
To see how far it is from being necessary,
we show that 
data informativity together with an additional operator inequality
on
$\ell^2(\mathbb{N})$
implies the frame-based condition.

The results above are obtained under the assumption that data of infinite length
are available. In practice, however, 
we typically have only
finite-length data.
Therefore, we study informativity of finite-length data, assuming partial prior knowledge of the true system.
In particular, we consider the situation where
the subsystem to be stabilized is finite-dimensional and 
is not affected by the remaining infinite-dimensional subsystem.
One illustrative case involves a cascade system 
of the heat equation
with uncertain parameters
and an unknown finite-dimensional system.
We obtain necessary and sufficient conditions 
guaranteeing that 
the finite-length data are informative for stabilization.
These conditions can be checked using only
matrix computations
as in the finite-dimensional setting, since
we can focus on stabilization of the finite-dimensional
unstable subsystem.

This paper is organized as follows.
In Section~\ref{sec:system}, we 
introduce the system and the data we consider.
In Section~\ref{sec:identification},
we briefly discuss a data property for
system identification.
Section~\ref{sec:stabilization}
addresses data informativity
for stabilization in the noise-free case, while
Section~\ref{sec:noise} presents
a sufficient condition for data informativity
in the presence of noise.
In Section~\ref{sec:frame_based},
we derive two conditions equivalent to
this sufficient condition.
Section~\ref{sec:finite_data}
examines informativity of finite-length data
under the assumption that partial prior knowledge of the system structure is obtained. Finally,
Section~\ref{sec:conclusion}
concludes the paper.

\subsection*{Notation}
The set of positive integers 
is denoted by $\mathbb{N}$.
We define 
$\mathbb{N}_0 \coloneqq \mathbb{N} \cup \{0 \}$ and
$\mathbb{E}_r \coloneqq 
\{\lambda \in \mathbb{C}:|\lambda| >r  \}$
for $r >0$. The dimension of a vector space $V$ 
is denoted by $\dim V$.
The standard orthonormal basis in $\ell^2(\mathbb{N})$
is denoted by $(e_k)_{k \in \mathbb{N}}$,
where
for each $j,k \in \mathbb{N}$, the 
$j$th term of $e_k$ 
is $1$ if $j=k$ and $0$ if $j\neq k$.
Throughout this paper, all Hilbert spaces are assumed to be
complex. 
The inner product on a Hilbert space is denoted by
$\langle \cdot,\cdot \rangle$.
Let $X$ and $Y$ be Hilbert spaces.
The set of 
bounded linear operators from $X$ to $Y$
is denoted by $\mathcal{L}(X,Y)$.
We write $\mathcal{L}(X) \coloneqq \mathcal{L}(X,X)$.
For a linear operator $A\colon X \to Y$,
we denote by $D(A)$
the domain of $A$.
Let $T \in \mathcal{L}(X,Y)$. 
The range and the kernel of $T$
are denoted by
$\Ran T$ and $\Ker T$, respectively.
The Hilbert-space adjoint of $T$ is denoted by $T^*$.
Let $E$ be a subset of $X$.
The closure and 
the orthogonal complement of $E$ are denoted by
$\overline{E}$
and 
$E^{\perp}$, respectively.
We write $TE \coloneqq \{Tx: x\in E \}$ and
denote by $T|_E$ the restriction of $T$ to $E$.
Let $S \in \mathcal{L}(X)$.
The spectrum and the spectral radius of $S$
are denoted by 
$\sigma(S)$  and
$\sr(S)$, respectively.
For $\lambda \in \mathbb{C} \setminus \sigma(S)$,
we write $R(\lambda,S) \coloneqq (\lambda I - S)^{-1}$.
We write $S \geq 0$ if $S$ is self-adjoint and satisfies
$\langle Sx, x \rangle \geq 0$ for all $x \in X$.
For subspaces $X_1$ and $X_2$ of $X$ such that
$X_1 \cap X_2 = \{0 \}$, 
the direct sum of $X_1$ and $X_2$ is denoted by
$
X_1 \dotplus X_2 \coloneqq \{
x_1+x_2: x_1 \in X_1 \text{~and~} x_2 \in X_2
\}.
$

\section{Systems and data}
\label{sec:system}
Let $X$ and $U$ be 
non-trivial complex Hilbert spaces.
Throughout this paper,
$X \times U$ is
endowed with the standard 
inner product induced by 
the inner products on $X$ and $U$.
We consider a discrete-time infinite-dimensional system
\begin{equation}
	\label{eq:true_sys}
	x(k+1) = A_sx(k)+ B_su(k),\quad k \in \mathbb{N}_0;\qquad 
	x(0) = x_{\mathrm{ini}} \in X,
\end{equation}
where 
$x(k) \in X$, $u(k) \in U$,
$A_s \in \mathcal{L}(X)$, and $B_s \in \mathcal{L}(U,X)$.
We regard the operator pair $(A_s,B_s)$ as the true system.
The discrete-time system \eqref{eq:true_sys} appears 
when we study the following continuous-time system with zero-order hold.

\begin{example}
	\label{ex:sampled_data}
	Let $A_c$ be the generator of a $C_0$-semigroup $(T(t))_{t \geq 0}$
	on a Hilbert space $X$. We denote by $X_{-1}$ the extrapolation space of $X$
	associated with $(T(t))_{t \geq 0}$ and by $(T_{-1}(t))_{t \geq 0}$
	the $C_0$-semigroup on $X_{-1}$ that extends $(T(t))_{t \geq 0}$; see, e.g., \cite[Section~2.10]{Tucsnak2009}
	for the details of 
	$X_{-1}$ and $(T_{-1}(t))_{t \geq 0}$.
	Let $B_c \in \mathcal{L}(U,X_{-1})$. 
	By \cite[Lemma~2.2]{Logemann2003},
	the operator $S_r\colon U \to X$ defined by
	\[
	S_r v \coloneqq \int_0^{r} T_{-1}(t)B_c v dt,\quad v \in U
	\]
	satisfies $S_r \in \mathcal{L}(U,X)$ for all $r >0$.
	Consider the continuous-time system
	with zero-order hold of period $\tau >0$:
	\begin{equation}
		\label{eq:cont_time_system}
		\dot z(t) = A_cz(t) + B_cu(k),\quad k\tau \leq t < (k+1)\tau,~
		k \in \mathbb{N}_0;\qquad z(0) = x_{\mathrm{ini}} \in X.
	\end{equation}
	Then
	$x(k) \coloneqq z(k\tau)$, $k \in \mathbb{N}_0$,
	satisfies \eqref{eq:true_sys} with $A_s \coloneqq T(\tau)$ and 
	$B_s \coloneqq S_\tau$.
	Moreover, there exists a constant $M\geq 1$ such that 
	\[
	\|z(t)\| \leq M (\|x(k)\|+\|u(k)\|)
	\] 
	for all $x_{\mathrm{ini}} \in X$, 
	$t \in [k\tau,(k+1)\tau)$, and $k \in \mathbb{N}_0$.
	Hence,
	many stability properties of
	the discrete-time system~\eqref{eq:true_sys}
	are inherited by
	the continuous-time system~\eqref{eq:cont_time_system}.
	\hspace*{\fill} $\triangle$ 
\end{example}

Let 
$x_1 = (x_1(k))_{k \in \mathbb{N}}$ and
$x_0 = (x_0(k))_{k \in \mathbb{N}}$ be sequences in $X$. Let
$u_0 = (u_0(k))_{k \in \mathbb{N}}$ be a sequence in $U$.
We assume that the data $(x_1,x_0,u_0)$
are generated from the true system $(A_s,B_s)$.
Note that here we consider the data of infinite length
for control of infinite-dimensional systems,
which is different from the finite-dimensional setting
introduced in
\cite{Waarde2020}.
\begin{assumption}
	\label{assump:x1x0u0_relation}
	The data $(x_1,x_0,u_0)$ satisfy
	\begin{equation}
		\label{eq:x1x0u0_relation}
		x_1(k) = A_sx_0(k) + B_s u_0(k)
		\quad \text{for all $k \in \mathbb{N}$}.
	\end{equation}
\end{assumption}

Typical data satisfying Assumption~\ref{assump:x1x0u0_relation} 
are a single state-input trajectory over an infinite horizon 
and infinitely many state-input trajectories over finite horizons,
as shown in  Examples~\ref{ex:single_trajectory} 
and \ref{ex:inf_trajectories}, respectively.
\begin{example}
	\label{ex:single_trajectory}
	Let $(x(k))_{k \in \mathbb{N}_0}$ be 
	the solution of the evolution equation~\eqref{eq:true_sys}.
	If we define
	the data $(x_1,x_0,u_0)$ by
	\begin{align*}
		x_1(k) \coloneqq x(k), \quad 
		x_0(k) \coloneqq x(k-1),  \quad \text{and}\quad 
		u_0(k) \coloneqq u(k-1)
	\end{align*}
	for $k \in \mathbb{N}$, then
	\eqref{eq:x1x0u0_relation} holds.
	\hspace*{\fill} $\triangle$ 
\end{example}

\begin{example}
	\label{ex:inf_trajectories}
	For each $\ell \in \mathbb{N}$, let $k_{\ell} \in \mathbb{N}$
	and
	let $(x^{\ell}(k))_{k =0}^{k_{\ell}}$ be 
	the solution of the evolution equation~\eqref{eq:true_sys}
	for some initial state $x(0)=x_{\mathrm{ini}}^{\ell} \in X$ and input $u= (u^{\ell}(k))_{k =0}^{k_{\ell}-1}$.
	Define 
	$T_1 \coloneqq 0$ and
	$T_{\ell} \coloneqq \sum_{p=1}^{\ell-1} k_p$ for $\ell \geq 2$.
	If we define the data $(x_1,x_0,u_0)$ by
	\begin{align*}
		x_1(k+T_{\ell}) \coloneqq x^{\ell}(k), \quad 
		x_0(k+T_{\ell}) \coloneqq x^{\ell}(k-1), \quad \text{and}\quad 
		u_0(k+T_{\ell}) \coloneqq u^{\ell}(k-1)
	\end{align*}
	for $k=1,\dots,k_{\ell}$ and $\ell \in \mathbb{N}$, then
	\eqref{eq:x1x0u0_relation} holds.
	\hspace*{\fill} $\triangle$ 
\end{example}

We consider the situation where 
the true system $(A_s,B_s)$ is unknown but
the data $(x_1,x_0,u_0)$ are available.
Given the data $(x_1,x_0,u_0)$, we define
the set $\Sigma_{\is}$ of systems by
\begin{equation}
	\label{eq:def_Sigma_is}
	\Sigma_{\is} \coloneqq  \{
	(A,B) \in \mathcal{L}(X) \times \mathcal{L}(U,X):
	x_1(k) = Ax_0(k)+ Bu_0(k) \text{~for all $k \in \mathbb{N}$}
	\}.
\end{equation}
By definition, systems in $\Sigma_{\is}$ are 
compatible with the data
$(x_1,x_0,u_0)$.
Under Assumption~\ref{assump:x1x0u0_relation},
we have $(A_s,B_s) \in \Sigma_{\is}$.

Bessel sequences play a fundamental role in our approach 
to data-driven stabilization for infinite-dimensional systems;
see \cite[Section~3.2]{Christensen2016} for basic 
properties of Bessel sequences.
\begin{definition}
	\label{def:Bessel}
	A sequence $(\eta_k)_{k \in \mathbb{N}}$ in a Hilbert space $Y$
	is called a {\em Bessel sequence} if there exists a constant $\alpha >0$
	such that for all $y \in Y$,
	\begin{equation}
		\label{eq:Bessel}
		\sum_{k=1}^{\infty} | \langle y,\eta_k\rangle  |^2 \leq \alpha \|y\|^2.
	\end{equation}
\end{definition}

For a Bessel sequence $(\eta_k)_{k \in \mathbb{N}}$ in a Hilbert space $Y$,
we define the operator $T$ on $Y$ by
\[
Ty \coloneqq ( \langle y,\eta_k\rangle )_{k \in \mathbb{N}},\quad 
y \in Y.
\]
By the inequality \eqref{eq:Bessel}, we obtain
$T \in \mathcal{L}
(Y,\ell^2(\mathbb{N}))$.
The adjoint of $T$ is given by
\begin{equation}
	\label{eq:synthesis}
	T^*w = \sum_{k=1}^{\infty} w_k \eta_k,\quad 
	w = (w_k)_{k \in \mathbb{N}} \in \ell^2(\mathbb{N}).
\end{equation}
We call $T^*$ the {\em synthesis operator associated with $(\eta_k)_{k \in \mathbb{N}}$}.
Conversely, if the operator given in \eqref{eq:synthesis} 
belongs to $\mathcal{L}(\ell^2(\mathbb{N}),Y)$, then
the associated sequence $(\eta_k)_{k \in \mathbb{N}}$ 
is a Bessel sequence; see \cite[Theorem~3.2.3]{Christensen2016}.

In this paper, we assume that all data form Bessel sequences.
Intuitively, this assumption guarantees 
that the data neither diverge 
nor become overly 
concentrated in any particular direction.
\begin{assumption}
	\label{assump:Bessel}
	Each component of the data $(x_1,x_0,u_0)$
	is a Bessel sequence, 
	i.e., $x_1$ and
	$x_0$ are Bessel sequences in $X$, and
	$u_0$ is a Bessel sequence in $U$.
\end{assumption}

Under Assumption~\ref{assump:Bessel},
we denote by $\Xi_1$, $\Xi_0$, and $\Upsilon_0$
the synthesis operators associated with
$x_1$,
$x_0$, and 
$u_0$, respectively. 
We conclude this section by characterizing 
the system set $\Sigma_{\is}$ in terms of 
the synthesis operators
$(\Xi_1,\Xi_0,\Upsilon_0)$.
\begin{lemma}
	\label{lem:Xi1_rep}
	Suppose that  
	the data $(x_1,x_0,u_0)$
	satisfy Assumption~\ref{assump:Bessel}, and let 
	$(\Xi_1,\Xi_0,\Upsilon_0)$ be the synthesis operators 
	associated with $(x_1,x_0,u_0)$.
	Then the set $\Sigma_{\is}$ defined by \eqref{eq:def_Sigma_is}
	satisfies
	\begin{equation}
		\label{eq:Sigma_is_characterization}
		\Sigma_{\is} = \{
		(A,B) \in \mathcal{L}(X) \times \mathcal{L}(U,X):
		\Xi_1 = 
		A\Xi_0 + B \Upsilon_0
		\}.
	\end{equation}
\end{lemma}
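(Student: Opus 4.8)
The plan is to prove the two inclusions in \eqref{eq:Sigma_is_characterization} separately; since both sides are subsets of $\mathcal{L}(X)\times\mathcal{L}(U,X)$, only the defining condition on the pair $(A,B)$ needs to be compared. The one fact I would record at the outset is that, by the series representation \eqref{eq:synthesis}, each synthesis operator sends the standard orthonormal basis vector $e_k$ of $\ell^2(\mathbb{N})$ to the corresponding data point, that is, $\Xi_1 e_k = x_1(k)$, $\Xi_0 e_k = x_0(k)$, and $\Upsilon_0 e_k = u_0(k)$ for every $k \in \mathbb{N}$.

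For the inclusion ``$\supseteq$'', suppose $(A,B) \in \mathcal{L}(X)\times\mathcal{L}(U,X)$ satisfies $\Xi_1 = A\Xi_0 + B\Upsilon_0$. Applying both sides to $e_k$ and invoking the identities above gives $x_1(k) = Ax_0(k) + Bu_0(k)$ for all $k \in \mathbb{N}$, so $(A,B) \in \Sigma_{\is}$ by \eqref{eq:def_Sigma_is}. For the reverse inclusion ``$\subseteq$'', take $(A,B) \in \Sigma_{\is}$ and fix $w = (w_k)_{k \in \mathbb{N}} \in \ell^2(\mathbb{N})$. By \eqref{eq:synthesis} the series defining $\Xi_0 w$ and $\Upsilon_0 w$ converge in $X$ and $U$, respectively, and since $A$ and $B$ are bounded, hence continuous, they may be moved inside these norm-convergent sums, so that $A\Xi_0 w = \sum_{k} w_k Ax_0(k)$ and $B\Upsilon_0 w = \sum_{k} w_k Bu_0(k)$. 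Using $x_1(k) = Ax_0(k) + Bu_0(k)$ termwise then yields $\Xi_1 w = \sum_k w_k x_1(k) = \sum_k w_k\big(Ax_0(k) + Bu_0(k)\big) = A\Xi_0 w + B\Upsilon_0 w$; as $w$ is arbitrary, $\Xi_1 = A\Xi_0 + B\Upsilon_0$.

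I do not expect a genuine obstacle: the only step that is not purely formal is the passage of the bounded operators $A$ and $B$ through the infinite sums, and that is just continuity together with linearity on the partial sums. An equally short alternative that sidesteps even this remark is to pass to adjoints: $\Xi_1 = A\Xi_0 + B\Upsilon_0$ is equivalent to $\Xi_1^{*} = \Xi_0^{*}A^{*} + \Upsilon_0^{*}B^{*}$, and comparing the $k$-th coordinates of $\Xi_1^{*}x$, $\Xi_0^{*}A^{*}x$, and $\Upsilon_0^{*}B^{*}x$ for an arbitrary $x \in X$ reduces this to $\langle x,\, x_1(k) - Ax_0(k) - Bu_0(k)\rangle = 0$ for all $k \in \mathbb{N}$ and all $x \in X$, which by nondegeneracy of the inner product is precisely the system of equations in \eqref{eq:def_Sigma_is}.
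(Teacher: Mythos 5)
Your proposal is correct and follows essentially the same route as the paper's own proof: the inclusion $\Sigma_{\is}\subseteq\Sigma_{\mathrm{syn}}$ by passing the bounded operators $A$ and $B$ through the norm-convergent series defining the synthesis operators, and the reverse inclusion by evaluating at the basis vectors $e_k$. Your explicit justification of interchanging $A$, $B$ with the sums (and the optional adjoint argument) only makes explicit what the paper leaves implicit.
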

\begin{proof}
	We denote by $\Sigma_{\mathrm{syn}}$
	the set on the right-hand side of \eqref{eq:Sigma_is_characterization}.
	First, we prove the inclusion $\Sigma_{\is} \subseteq
	\Sigma_{\mathrm{syn}}$.
	Let $(A,B) \in \Sigma_{\is}$.
	By the definition \eqref{eq:def_Sigma_is} 
	of $\Sigma_{\is}$, we have
	\[
	x_1(k) = Ax_0(k)+ Bu_0(k)
	\]
	for all $k \in \mathbb{N}$.
	Combining this with
	the expression \eqref{eq:synthesis} of synthesis operators, we obtain
	\begin{align*}
		\Xi_1 w 
		=
		\sum_{k=1}^{\infty} w_k \big(Ax_{0}(k)+Bu_{0}(k) \big) 
		=
		(A\Xi_0 + B \Upsilon_0)w
	\end{align*}
	for all $w= (w_k)_{k \in \mathbb{N}} \in \ell^2(\mathbb{N})$.
	Therefore, $(A,B) \in \Sigma_{\mathrm{syn}}$.
	
	Next, we prove the inclusion $\Sigma_{\mathrm{syn}} \subseteq
	\Sigma_{\is} $.
	If $(A,B) \in \Sigma_{\mathrm{syn}}$, then
	the $k$th unit vector $e_k$ in $\ell^2(\mathbb{N})$
	satisfies
	\[
	x_1(k) = 
	\Xi_1 e_k = (A\Xi_0 + B \Upsilon_0)e_k = 
	Ax_0(k) + Bu_0(k)
	\]
	for each  $k \in \mathbb{N}$.
	Thus,
	$(A,B) \in \Sigma_{\is}$.
\end{proof}

\section{Informativity for system identification}
\label{sec:identification}
In this section, we briefly discuss a data property for identifying
the true system. 
As in the finite-dimensional case, this identification property 
can be characterized in terms of the range of synthesis operators. 
However, the significant difference is that the closure of the range is used for infinite-dimensional systems.
This will also present a challenge in extending the results on
data-driven stabilization from finite-dimensional systems in Section~\ref{sec:stabilization}.

The following notion 
captures  the data property that identifies the true system $(A_s,B_s)$.
\begin{definition}
	Under Assumption~\ref{assump:x1x0u0_relation},
	the data $(x_1,x_0,u_0)$ are called
	{\em informative for system identification} if 
	$\Sigma_{\is} = \{(A_s,B_s)\}$.
\end{definition}

The next proposition provides
a necessary and sufficient condition for 
the data to be informative for system 
identification, which
is the infinite-dimensional version of
\cite[Proposition~6]{Waarde2020}.
The combination of 
the dense range of synthesis operators and the prior knowledge that the operators $(A,B) \in \Sigma_{\is}$ are bounded
leads to system identification.
\begin{proposition}
	\label{prop:identification}
	Suppose that  
	the data $(x_1,x_0,u_0)$
	satisfy Assumptions~\ref{assump:x1x0u0_relation} and 
	\ref{assump:Bessel}. Let
	$(\Xi_1,\Xi_0,\Upsilon_0)$ be the synthesis operators 
	associated with $(x_1,x_0,u_0)$. Then
	$(x_1,x_0,u_0)$ are
	informative for system identification
	if and only if 
	\begin{equation}
		\label{eq:identification}
		\overline{
			\Ran
			\begin{bmatrix}
				\Xi_0 \\ \Upsilon_0
			\end{bmatrix}
		} =
		X \times U.
	\end{equation}
\end{proposition}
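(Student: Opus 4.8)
The plan is to characterize $\Sigma_{\is}$ as a coset of the null space of the operator $(A,B) \mapsto A\Xi_0 + B\Upsilon_0$ and then reduce informativity for identification to uniqueness of solutions, which in turn is an injectivity statement. Using Lemma~\ref{lem:Xi1_rep}, for $(A,B) \in \Sigma_{\is}$ the difference $(\Delta_A,\Delta_B) \coloneqq (A - A_s, B - B_s)$ ranges exactly over the set of pairs in $\mathcal{L}(X) \times \mathcal{L}(U,X)$ satisfying $\Delta_A \Xi_0 + \Delta_B \Upsilon_0 = 0$, i.e.\ $(\Delta_A, \Delta_B)$ annihilates $\Ran \begin{bmatrix}\Xi_0 \\ \Upsilon_0\end{bmatrix}$ when we view $(\Delta_A,\Delta_B)$ as acting on $X \times U$ by $(\xi,\upsilon) \mapsto \Delta_A \xi + \Delta_B \upsilon$. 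Hence $(x_1,x_0,u_0)$ are informative for system identification if and only if the only such bounded pair is $(0,0)$.

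For the ``if'' direction, suppose \eqref{eq:identification} holds and $\Delta_A \Xi_0 + \Delta_B \Upsilon_0 = 0$. Then the bounded operator $C \colon X \times U \to X$ defined by $C(\xi,\upsilon) \coloneqq \Delta_A \xi + \Delta_B \upsilon$ vanishes on $\Ran \begin{bmatrix}\Xi_0 \\ \Upsilon_0\end{bmatrix}$, and by continuity it vanishes on the closure of that range, which by assumption is all of $X \times U$. Therefore $C = 0$, and taking $(\xi,\upsilon) = (\xi,0)$ and $(\xi,\upsilon) = (0,\upsilon)$ gives $\Delta_A = 0$ and $\Delta_B = 0$, so $\Sigma_{\is} = \{(A_s,B_s)\}$.

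For the ``only if'' direction, I would argue by contraposition: if \eqref{eq:identification} fails, then $\overline{\Ran \begin{bmatrix}\Xi_0 \\ \Upsilon_0\end{bmatrix}}$ is a proper closed subspace of $X \times U$, so its orthogonal complement contains a nonzero element $(\xi_0,\upsilon_0)$. I then construct a nonzero bounded pair $(\Delta_A,\Delta_B)$ vanishing on the range. Fix a nonzero $\zeta \in X$ and set $\Delta_A \xi \coloneqq \langle \xi, \xi_0 \rangle \zeta$ and $\Delta_B \upsilon \coloneqq \langle \upsilon, \upsilon_0 \rangle \zeta$; these are rank-one bounded operators, not both zero since $(\xi_0,\upsilon_0) \neq 0$, and for every $k$ we have $\Delta_A x_0(k) + \Delta_B u_0(k) = \big( \langle x_0(k), \xi_0\rangle + \langle u_0(k), \upsilon_0\rangle\big)\zeta = \langle (x_0(k),u_0(k)), (\xi_0,\upsilon_0)\rangle \zeta = 0$ because $(x_0(k),u_0(k)) \in \Ran \begin{bmatrix}\Xi_0 \\ \Upsilon_0\end{bmatrix}$ (it equals the image of the unit vector $e_k$). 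Thus $(A_s + \Delta_A, B_s + \Delta_B) \in \Sigma_{\is}$ is a second element, so the data are not informative for identification.

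The main obstacle, and the reason the closure appears, is precisely the non-closedness of $\Ran \begin{bmatrix}\Xi_0 \\ \Upsilon_0\end{bmatrix}$: in the finite-dimensional case one works directly with the range of a matrix, but here vanishing on the range does not a priori mean vanishing on its closure, and conversely density—not equality with $X \times U$—is the right hypothesis. The key technical point making the argument go through is the continuity of the candidate perturbation operators together with the elementary fact that a bounded operator vanishing on a dense set vanishes everywhere; the prior knowledge that all elements of $\Sigma_{\is}$ are bounded is exactly what lets us use this. One should also double-check that the perturbations constructed in the converse direction are genuinely bounded (rank-one operators built from inner products are), and that $\Upsilon_0$ maps into the correct space so that the pairing on $X \times U$ is the intended one.
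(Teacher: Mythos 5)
Your proof is correct and follows essentially the same route as the paper: the converse direction (a rank-one perturbation built from a nonzero vector orthogonal to $\overline{\Ran H}$ and applied to the data vectors $He_k$) is identical, and the forward direction rests on the same principle that a bounded operator vanishing on a dense subspace vanishes everywhere. The only difference is cosmetic: you apply that principle directly to the difference of two compatible systems, whereas the paper first constructs the explicit reconstruction $\begin{bmatrix} A & B\end{bmatrix}\psi = \Xi_1 H^*(HH^*)^{-1}\psi$ on the dense domain $\Ran(HH^*)$; your version is slightly leaner, while the paper's has the side benefit of exhibiting how $(A_s,B_s)$ can actually be recovered from the data.
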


\begin{proof}
	Define $H \in \mathcal{L}(\ell^2(\mathbb{N}), X \times U) $
	by
	\[
	H  \coloneqq \begin{bmatrix}
		\Xi_0 \\ \Upsilon_0
	\end{bmatrix}.
	\]
	First, suppose that $\overline{\Ran H } = X \times U$. 	Since $\Ker H ^* = (\overline{\Ran H })^{\perp}
	= \{ 0\}$, it follows that 
	$H H ^*$ is injective and hence 
	has an algebraic inverse 
	$(H H ^*)^{-1}$ on $X\times U$
	with domain $D((H H ^*)^{-1}) \coloneqq \Ran (HH^*)$. 
	Let $(A,B) \in \Sigma_{\is}$. Then
	Lemma~\ref{lem:Xi1_rep} yields
	\begin{equation}
		\label{eq:AB_expression}
		\Xi_1 H ^*(HH^*)^{-1}
		\psi =	
		\begin{bmatrix}
			A & B
		\end{bmatrix} \psi	
		\quad \text{for all~$\psi \in \Ran (HH^*) $}.
	\end{equation}
	Since
	$
	\overline{\Ran (HH^*)} = X\times U 
	$	
	and since 
	$\begin{bmatrix}
		A & B
	\end{bmatrix}$ is a bounded operator
	on $X \times U$,
	the system $(A,B)$ satisfying
	\eqref{eq:AB_expression} is uniquely determined.
	This yields $A= A_s$ and $B=B_s$
	under Assumption~\ref{assump:x1x0u0_relation}.
	Thus, the data $(x_1,x_0,u_0)$ are
	informative for system identification.
	
	Conversely, suppose that 
	the data $(x_1,x_0,u_0)$ are
	informative for system identification.
	Assume, to get a contradiction, that
	$\overline{\Ran H } \neq  X \times U$. 
	Since $(\overline{\Ran H })^{\perp} \neq  \{0\} $,
	we can take
	\[
	\begin{bmatrix}
		\xi_0 \\ \upsilon_0
	\end{bmatrix} \in (\overline{\Ran H })^{\perp} \setminus \{ 0\}.
	\]
	Let $\zeta \in X \setminus \{0 \}$. 
	Define the operators 
	$A\in \mathcal{L}(X)$ and $B\in \mathcal{L}(U,X)$ by
	\[
	Ax \coloneqq A_sx+
	\langle x,\xi_0 \rangle\zeta,\quad x \in X;
	\quad \text{and} \quad  
	Bu \coloneqq B_su + 
	\langle u,\upsilon_0\rangle  \zeta,\quad u \in U.
	\]
	Then 
	$(A,B) \neq  (A_s,B_s)$. 
	Using  Assumption~\ref{assump:x1x0u0_relation}
	and Lemma~\ref{lem:Xi1_rep},
	we obtain 
	\begin{equation}
		\label{eq:Xi1_ABH}
		\Xi_1 = \begin{bmatrix}
			A_s & B_s	
		\end{bmatrix} H.
	\end{equation}
	Since
	\[
	\begin{bmatrix}
		A-A_s & B - B_s	
	\end{bmatrix} 
	\begin{bmatrix}
		x \\ u
	\end{bmatrix} = 
	\left\langle
	\begin{bmatrix}
		x \\ u
	\end{bmatrix},
	\begin{bmatrix}
		\xi_0 \\ \upsilon_0
	\end{bmatrix}
	\right\rangle \zeta = 
	0\qquad\text{for all~$\begin{bmatrix}
			x \\ u
		\end{bmatrix} \in \Ran H$},
	\]
	it follows that 
	\begin{equation}
		\label{eq:ABH_AX_BU}
		\begin{bmatrix}
			A_s & B_s	
		\end{bmatrix} H = A\Xi_0  + B\Upsilon_0.
	\end{equation}
	By \eqref{eq:Xi1_ABH} and \eqref{eq:ABH_AX_BU},
	$\Xi_1 = A\Xi_0+B\Upsilon_0$.
	Using Lemma~\ref{lem:Xi1_rep} again, we obtain
	$(A,B) \in \Sigma_{\is}$. This contradicts
	informativity for system identification.
\end{proof}

\section{Informativity for stabilization}
\label{sec:stabilization}
In this section, we investigate a data property for 
stabilization by state feedback.
First, we present the main results in this section.
We then give the proof of these results.
Finally, we make a comment on controller design and
discuss conditions used in the results.
\subsection{Main results}
For $\gamma \in (0,1)$,
a bounded linear operator $F$ 
on a Hilbert space
is said to be 
{\em power stable with decay rate $\gamma$}
if there exists a constant $M\geq 1$ such that 
$\|F^k\| \leq M \gamma^k$ for all $k \in \mathbb{N}_0$.
Given an operator
$K \in \mathcal{L}(X,U)$ and constants
$M\geq 1$ and $\gamma > 0$, 
we define the set $\Sigma_{K,M,\gamma}$ of systems
by
\[
\Sigma_{K,M,\gamma} \coloneqq \{
(A,B) \in \mathcal{L}(X) \times \mathcal{L}(U,X):
\|(A+BK)^k\| \leq M \gamma^k \text{~for all $k \in \mathbb{N}_0$}
\}.
\]
The following notion of 
informativity for stabilization was introduced for 
finite-dimensional systems in
\cite[Definition~12]{Waarde2020}.
This data property implies that
every system in the set $\Sigma_{\is}$ defined by
\eqref{eq:def_Sigma_is} is stabilized 
by a single feedback gain.
\begin{definition}
	\label{def:IS}
	For a constant $\gamma \in (0,1)$,
	the data $(x_1,x_0,u_0)$ are called
	{\em 
		informative for stabilization with decay rate $\gamma$}
	if there exist an operator 
	$K \in \mathcal{L}(X,U)$ and a constant
	$M\geq 1$ 
	such that $\Sigma_{\is} \subseteq \Sigma_{K,M,\gamma}$.
	An operator $K\in \mathcal{L}(X,U)$ is called
	a {\em stabilizing gain with decay rate $\gamma$ for $(x_1,x_0,u_0)$} if
	$\Sigma_{\is} \subseteq \Sigma_{K,M,\gamma}$ 
	for some constant $M \geq 1$.
\end{definition}

The implication (ii) $\Rightarrow $ (i) of
the following theorem gives
a sufficient condition for the data to be 
informative for stabilization.
\begin{theorem}
	\label{thm:informative_stabilization}
	Suppose that  
	the data $(x_1,x_0,u_0)$
	satisfy Assumptions~\ref{assump:x1x0u0_relation} and 
	\ref{assump:Bessel}. Let
	$(\Xi_1,\Xi_0,\Upsilon_0)$ be the synthesis operators 
	associated with $(x_1,x_0,u_0)$.
	Then the following statements are equivalent for a fixed $\gamma \in (0,1)$:
	\begin{enumerate}
		[label=\upshape(\roman*), leftmargin=*, widest=ii]
		\item 
		The data $(x_1,x_0,u_0)$  are
		informative for stabilization with decay rate $\gamma$, and 
		there exists a stabilizing gain $K \in \mathcal{L}(X,U)$ 
		with decay rate $\gamma$ for $(x_1,x_0,u_0)$
		such that
		\begin{equation}
			\label{eq:im_Xi0_inclusion}
			\Ran 
			\begin{bmatrix}
				\Xi_0 \\ K \Xi_0
			\end{bmatrix}
			\subseteq
			\Ran 
			\begin{bmatrix}
				\Xi_0 \\ \Upsilon_0
			\end{bmatrix}.		
		\end{equation}
		
		\item $\overline{\Ran \Xi_0} = X$, and there exists 
		a densely-defined operator 
		$\Xi_0^{\dagger} \colon X \to \ell^2(\mathbb{N})$ with
		domain $D(\Xi_0^{\dagger}) \coloneqq \Ran \Xi_0$
		such that 
		\begin{enumerate}[label=\upshape\alph*), leftmargin=*, widest=b]
			\item $\Xi_0 \Xi_0^{\dagger}x = x$ for all $x \in D(\Xi_0^{\dagger})$;
			\item $\Upsilon_0 \Xi_0^{\dagger}$ can be extended to
			an operator in $\mathcal{L}(X,U)$; and
			\item $\Xi_1 \Xi_0^{\dagger}$ can be extended to
			an operator  in $\mathcal{L}(X)$, and 
			this extension is power stable with decay
			rate $\gamma$.
		\end{enumerate}
	\end{enumerate}
\end{theorem}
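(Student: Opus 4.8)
The plan is to prove the two implications separately, organized around a single structural observation: both (i) and (ii) force every closed-loop operator $A+BK$, $(A,B)\in\Sigma_{\is}$, to agree on $\Ran\Xi_0$ with one fixed power-stable operator, after which density of $\Ran\Xi_0$ upgrades this to agreement on all of $X$ and, crucially, yields a single constant $M$ that works for the whole set $\Sigma_{\is}$. Throughout I would use Lemma~\ref{lem:Xi1_rep}, i.e. $(A,B)\in\Sigma_{\is}\iff\Xi_1=A\Xi_0+B\Upsilon_0$, together with its consequence that $\begin{bmatrix}A-A_s & B-B_s\end{bmatrix}$ annihilates $\Ran\begin{bmatrix}\Xi_0\\\Upsilon_0\end{bmatrix}$.

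For (ii)~$\Rightarrow$~(i): let $K\in\mathcal{L}(X,U)$ be the bounded extension of $\Upsilon_0\Xi_0^{\dagger}$ furnished by (b). For any $(A,B)\in\Sigma_{\is}$ and $x\in\Ran\Xi_0=D(\Xi_0^{\dagger})$, property (a) gives $\Xi_1\Xi_0^{\dagger}x=(A\Xi_0+B\Upsilon_0)\Xi_0^{\dagger}x=Ax+B(\Upsilon_0\Xi_0^{\dagger})x=(A+BK)x$; since $\overline{\Ran\Xi_0}=X$ and both $A+BK$ and the bounded extension of $\Xi_1\Xi_0^{\dagger}$ from (c) are bounded, they coincide, so $\|(A+BK)^k\|\le M\gamma^k$ with $M$ independent of $(A,B)$. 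This is informativity for stabilization with stabilizing gain $K$. The range inclusion~\eqref{eq:im_Xi0_inclusion} follows by observing that for $w\in\ell^2(\mathbb{N})$ the vector $v:=\Xi_0^{\dagger}\Xi_0 w$ satisfies $\Xi_0 v=\Xi_0 w$ (by (a)) and $\Upsilon_0 v=K\Xi_0 w$.

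For (i)~$\Rightarrow$~(ii), the first task is $\overline{\Ran\Xi_0}=X$. Arguing by contradiction, pick a unit vector $\zeta\in(\overline{\Ran\Xi_0})^{\perp}$; then $A_c:=A_s+\langle\cdot,\zeta\rangle\zeta\,c$ satisfies $A_c\Xi_0=A_s\Xi_0$, so $(A_c,B_s)\in\Sigma_{\is}$ for every $c\in\mathbb{C}$. Put $N:=A_s+B_sK$, which is power stable with decay rate $\gamma$ (apply the stabilizing-gain bound to $(A_s,B_s)\in\Sigma_{\is}$), so $\sigma(N)\subseteq\{\lambda:|\lambda|\le\gamma\}$. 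The scalar function $g(\lambda):=\langle R(\lambda,N)\zeta,\zeta\rangle$ is analytic on $\{\lambda:|\lambda|>\gamma\}$ with $g(\lambda)=\lambda^{-1}+O(\lambda^{-2})$ as $\lambda\to\infty$, hence not identically zero; choose $\lambda_0$ with $|\lambda_0|>1$ and $g(\lambda_0)\neq0$, and set $c_0:=1/g(\lambda_0)$. A direct computation with the rank-one perturbation $A_{c_0}+B_sK=N+\langle\cdot,\zeta\rangle\zeta\,c_0$ shows that $R(\lambda_0,N)\zeta\neq0$ is an eigenvector with eigenvalue $\lambda_0$ (using $NR(\lambda_0,N)\zeta=\lambda_0R(\lambda_0,N)\zeta-\zeta$ and $c_0g(\lambda_0)=1$), so $\sr(A_{c_0}+B_sK)\ge|\lambda_0|>1$, contradicting $\Sigma_{\is}\subseteq\Sigma_{K,M,\gamma}$. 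Hence $\Ran\Xi_0$ is dense.

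Finally I would construct $\Xi_0^{\dagger}$ under (i). Let $G:=\begin{bmatrix}\Xi_0\\\Upsilon_0\end{bmatrix}$. By~\eqref{eq:im_Xi0_inclusion}, $\begin{bmatrix}x\\Kx\end{bmatrix}\in\Ran G$ for every $x\in\Ran\Xi_0$, while conversely $\begin{bmatrix}x\\Kx\end{bmatrix}\in\Ran G$ forces $x\in\Ran\Xi_0$; so, defining $\Xi_0^{\dagger}x$ to be the minimal-norm element of $\{w\in\ell^2(\mathbb{N}):\Xi_0 w=x,\ \Upsilon_0 w=Kx\}$ produces a linear operator whose domain is exactly $\Ran\Xi_0$ and which satisfies $GG^{\dagger}$-type identities on $\Ran G$. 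Property (a) and $\Upsilon_0\Xi_0^{\dagger}=K|_{\Ran\Xi_0}$ (hence (b) with extension $K$) are then immediate, and for (c) I would use $\Xi_1=\begin{bmatrix}A_s & B_s\end{bmatrix}G$ to get $\Xi_1\Xi_0^{\dagger}=(A_s+B_sK)|_{\Ran\Xi_0}=N|_{\Ran\Xi_0}$, whose bounded extension is the power-stable operator $N$. The step I expect to be \emph{the} real obstacle is the density claim $\overline{\Ran\Xi_0}=X$: unlike a data matrix, $\Xi_0$ need not have closed range, so the finite-dimensional pseudoinverse arguments of \cite{Waarde2020} are unavailable and the rank-one resolvent computation above must take their place; a secondary, pervasive nuisance is that $\Xi_0^{\dagger}$ (and $G^{\dagger}$) are genuinely unbounded, so every identity has to be verified on the dense domain $\Ran\Xi_0$ before closures are taken.
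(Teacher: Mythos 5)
Your proposal is correct, and most of it tracks the paper's own argument: the implication (ii)~$\Rightarrow$~(i) is handled identically (the closed loop $A+BK$ agrees with the bounded extension of $\Xi_1\Xi_0^{\dagger}$ on the dense subspace $\Ran \Xi_0$, hence equals it, so the constant $M$ is automatically uniform over $\Sigma_{\is}$), and your minimal-norm construction of $\Xi_0^{\dagger}$ in (i)~$\Rightarrow$~(ii) is exactly the paper's operator $H_0^{-1}G$ with $H_0\coloneqq H|_{(\Ker H)^{\perp}}$, since the minimal-norm solution of $Hw=Gx$ is precisely the unique solution lying in $(\Ker H)^{\perp}$. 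Where you genuinely diverge is the proof that $\overline{\Ran\Xi_0}=X$. The paper deduces this from Lemma~\ref{lem:ker_lemma}, whose proof shows $\sr(A_0+B_0K)=0$ for every $(A_0,B_0)$ compatible with the zero data via uniform resolvent estimates and Vitali's theorem, and then upgrades this to $A_0+B_0K=0$ by a self-adjointness trick. You instead perturb $A_s$ by the rank-one operator $c\langle\cdot,\zeta\rangle\zeta$ with $\zeta\in(\overline{\Ran\Xi_0})^{\perp}$ a unit vector, and exhibit an explicit unstable eigenvalue of the perturbed closed loop: with $N\coloneqq A_s+B_sK$ and $g(\lambda)\coloneqq\langle R(\lambda,N)\zeta,\zeta\rangle$, the asymptotics $g(\lambda)=\lambda^{-1}+O(\lambda^{-2})$ give a $\lambda_0$ with $|\lambda_0|>1$ and $g(\lambda_0)\neq 0$, and then $R(\lambda_0,N)\zeta\neq 0$ is an eigenvector of $N+c_0\langle\cdot,\zeta\rangle\zeta$ for $c_0=1/g(\lambda_0)$ with eigenvalue $\lambda_0$, contradicting $\Sigma_{\is}\subseteq\Sigma_{K,M,\gamma}$. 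I have checked this computation and it is sound ($NR(\lambda_0,N)\zeta=\lambda_0R(\lambda_0,N)\zeta-\zeta$ combined with $c_0g(\lambda_0)=1$ gives the eigenvalue equation, and the eigenvector is nonzero because $g(\lambda_0)\neq0$). Your route is more elementary and entirely self-contained for this theorem, which only needs the density of $\Ran\Xi_0$; what it does not deliver is the stronger conclusion \eqref{eq:Xi_Up_ran_inclusion} of Lemma~\ref{lem:ker_lemma}, which constrains $K$ itself and is reused in Corollary~\ref{coro:informative_stabilization_SI} and Theorem~\ref{thm:equiv_frame1}. So you trade a reusable lemma for a shorter argument, but for the statement as posed your proof is complete and valid.
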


In the following corollary of Theorem~\ref{thm:informative_stabilization},
we omit the condition~\eqref{eq:im_Xi0_inclusion} 
from statement~(i)
in the case $\dim U = 1$, and consequently,
informativity for stabilization is equivalent to
statement~(ii).
Note that a sequence $u_0$ in $\mathbb{C}$ is a Bessel
sequence if and only if $u_0 \in \ell^2(\mathbb{N})$.
\begin{corollary}
	\label{coro:informative_stabilization_SI}
	Assume that  $\dim U = 1$.
	Suppose that  
	the data $(x_1,x_0,u_0)$
	satisfy Assumptions~\ref{assump:x1x0u0_relation} and 
	\ref{assump:Bessel}. Let
	$(\Xi_1,\Xi_0,\Upsilon_0)$ be the synthesis operators 
	associated with $(x_1,x_0,u_0)$.
	Assume either that $\Ker \Xi_0 \setminus \{u_0\}^{\perp} \neq 
	\emptyset$ or
	that the data $(x_1,x_0,u_0)$ are not
	informative for system identification.
	For a fixed $\gamma \in (0,1)$,
	the data $(x_1,x_0,u_0)$  are
	informative for stabilization with decay rate $\gamma$
	if and only if statement \textup{(ii)} of Theorem~\ref{thm:informative_stabilization}
	holds.
\end{corollary}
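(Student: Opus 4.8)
The plan is to build on Theorem~\ref{thm:informative_stabilization}, which already gives the equivalence between statements~(i) and~(ii) there. Since statement~(i) asserts, in particular, that the data are informative for stabilization with decay rate~$\gamma$, the implication ``statement~(ii) $\Rightarrow$ informative for stabilization'' is immediate, and the whole content of the corollary is the converse implication. So I would assume the data are informative for stabilization with decay rate~$\gamma$, fix once and for all a stabilizing gain $K_0 \in \mathcal L(X,U)$ and a constant $M \geq 1$ with $\Sigma_{\is} \subseteq \Sigma_{K_0,M,\gamma}$, and try to produce a stabilizing gain with decay rate~$\gamma$ that moreover satisfies the range inclusion~\eqref{eq:im_Xi0_inclusion}: this would give statement~(i) and hence, by Theorem~\ref{thm:informative_stabilization}, statement~(ii). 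The argument splits along the two alternatives in the hypothesis, and the function of $\dim U = 1$ throughout is that $\mathcal L(X,U)$ consists of scalar functionals while every $B \in \mathcal L(U,X)$ has rank at most one.

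Before the split I would record a preliminary fact, valid for any $U$: informativity for stabilization already forces $\overline{\Ran \Xi_0} = X$. If some unit vector $\zeta \in (\overline{\Ran \Xi_0})^{\perp}$ existed, then each rank-one operator $\Delta_\lambda \coloneqq \lambda \langle \cdot, \zeta\rangle \zeta$, $\lambda \in \mathbb C$, would satisfy $\Delta_\lambda \Xi_0 = 0$, so $(A_s + \Delta_\lambda, B_s) \in \Sigma_{\is}$ by Lemma~\ref{lem:Xi1_rep}; but then $\|(A_s + \Delta_\lambda + B_s K_0)\zeta\| \geq |\lambda| - \|A_s + B_s K_0\|$ would contradict the bound $\|A_s + \Delta_\lambda + B_s K_0\| \leq M\gamma$ as $|\lambda| \to \infty$.

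Suppose first that $\Ker \Xi_0 \setminus \{u_0\}^{\perp} \neq \emptyset$. Since $\dim U = 1$, this gives a vector $v_0 \in \Ker \Xi_0$ with $\Upsilon_0 v_0 \neq 0$. For an arbitrary $w \in \ell^2(\mathbb N)$ I would correct $w$ within $\Ker \Xi_0$: pick the scalar $\beta$ with $\beta \Upsilon_0 v_0 = K_0 \Xi_0 w - \Upsilon_0 w$ and set $w' \coloneqq w + \beta v_0$, so $\Xi_0 w' = \Xi_0 w$ and $\Upsilon_0 w' = K_0 \Xi_0 w$. Hence $\begin{bmatrix} \Xi_0 \\ K_0 \Xi_0\end{bmatrix} w = \begin{bmatrix} \Xi_0 \\ \Upsilon_0 \end{bmatrix} w' \in \Ran \begin{bmatrix} \Xi_0 \\ \Upsilon_0 \end{bmatrix}$, so \eqref{eq:im_Xi0_inclusion} holds with $K = K_0$ itself, and statement~(i) follows. (A minor point I would need to settle here, for a one-dimensional complex $U$, is the precise relation between $\{u_0\}^{\perp}$ and $\Ker \Upsilon_0$.)

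Suppose instead that the data are not informative for system identification; I may assume in addition that $\Ker \Xi_0 \subseteq \{u_0\}^{\perp}$, since otherwise the previous case applies. Then $\Ker \Xi_0 \subseteq \Ker \Upsilon_0$, so $\Upsilon_0$ factors as $\Upsilon_0 = L\Xi_0$ for a linear map $L$ with domain $\Ran \Xi_0$. By Proposition~\ref{prop:identification} I would choose $(\xi,\upsilon) \in \bigl( \overline{\Ran \begin{bmatrix} \Xi_0 \\ \Upsilon_0 \end{bmatrix}} \bigr)^{\perp} \setminus \{0\}$; the preliminary fact rules out $\upsilon = 0$ (which would give a nonzero $\xi \perp \Ran \Xi_0$, contradicting $\overline{\Ran \Xi_0} = X$), and reading the relation $\langle x, \xi\rangle + \langle Lx, \upsilon\rangle = 0$ on $x \in \Ran \Xi_0$ with $\dim U = 1$ shows $L$ to be a bounded functional, hence to extend to an operator $L \in \mathcal L(X,U)$ with $\Upsilon_0 = L\Xi_0$. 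By Lemma~\ref{lem:Xi1_rep} this means $\Sigma_{\is} = \{(A,B) : (A+BL)\Xi_0 = \Xi_1\}$; fixing $(A_0,B_0) \in \Sigma_{\is}$ and setting $N \coloneqq A_0 + B_0 L \in \mathcal L(X)$, density of $\Ran \Xi_0$ in $X$ forces $A + BL = N$ for every $(A,B) \in \Sigma_{\is}$, while conversely $(N - BL, B) \in \Sigma_{\is}$ for every $B \in \mathcal L(U,X)$. Since all these systems lie in $\Sigma_{K_0,M,\gamma}$, the operator $A + BK_0 = N + B(K_0 - L)$ satisfies $\|(N + B(K_0-L))^k\| \leq M\gamma^k$ for all $k \in \mathbb N_0$ and all $B$: the choice $B = 0$ shows $N$ is power stable with decay rate $\gamma$, whereas if $K_0 \neq L$ then $B(K_0 - L)$ can be made to have arbitrarily large norm as $B$ ranges over $\mathcal L(U,X)$ (using $\dim U = 1$), so the case $k = 1$ of this bound fails for $\|B\|$ large --- a contradiction. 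Hence $K_0 = L$, so $L$ is a stabilizing gain with decay rate $\gamma$, and since $\Upsilon_0 = L\Xi_0$ the two block operators in \eqref{eq:im_Xi0_inclusion} coincide; thus statement~(i) holds with $K = L$. I expect this second case to be the main obstacle, the two delicate steps being the upgrade of the purely algebraic factorization $\Upsilon_0 = L\Xi_0$ to a \emph{bounded} operator $L$ --- which is exactly where the non-closedness of $\Ran \Xi_0$, the failure of system identification, and $\dim U = 1$ have to be combined --- and the extraction, from the uniformity of the constant $M$, that no stabilizing gain other than $L$ survives.
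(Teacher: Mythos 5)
Your argument is correct and establishes the corollary. It shares the paper's skeleton --- the nontrivial direction is reduced to producing a stabilizing gain that satisfies the range inclusion \eqref{eq:im_Xi0_inclusion}, after which Theorem~\ref{thm:informative_stabilization} finishes --- but the mechanism differs. The paper invokes Lemma~\ref{lem:ker_lemma} to get $\Ran G \subseteq \overline{\Ran H}$ with $G$, $H$ as in \eqref{eq:GH_def}, notes that $(\Ran G)^{\perp}$ is one-dimensional because $\dim U = 1$, and thus obtains a dichotomy: either $\overline{\Ran H} = X \times U$ (the identifiable case, where the hypothesis forces $\Ker\Xi_0 \nsubseteq \Ker\Upsilon_0$ and Lemma~\ref{lem:suff_cond_for_inclusion}.b applies), or $\overline{\Ran H} = \Ran G$, which is closed and immediately gives $K\Xi_0 = \Upsilon_0$. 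You split instead on the two alternatives of the hypothesis: your first case is in substance Lemma~\ref{lem:suff_cond_for_inclusion} done by hand via the correction $w \mapsto w + \beta v_0$ inside $\Ker\Xi_0$, while in the second case you explicitly construct the bounded factor $L$ with $\Upsilon_0 = L\Xi_0$ from a vector in $(\overline{\Ran H})^{\perp}$ with nonzero $U$-component, and then show that every stabilizing gain equals $L$ by letting $\|B\|\to\infty$ in the family $(N - BL, B)\in\Sigma_{\is}$. A pleasant by-product of your route is that the corollary-specific steps use only the $k=1$ bound $\|A+BK_0\|\leq M\gamma$ (both for $\overline{\Ran\Xi_0}=X$ and for $K_0=L$), so they bypass the resolvent/Vitali machinery inside Lemma~\ref{lem:ker_lemma}; the price is a longer second case. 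The point you flag about whether $\{u_0\}^{\perp}$ equals $\Ker\Upsilon_0$ or its complex conjugate is a notational wrinkle already present in the paper's own proof and does not affect either argument.
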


The condition $\Ker \Xi_0 \setminus \{u_0\}^{\perp} \neq 
\emptyset$ in Corollary~\ref{coro:informative_stabilization_SI} is satisfied, for example, when
$x_0(k_1) = c x_0(k_2)$ and 
$u_0(k_1) \neq  cu_0(k_2)$ for some $k_1,k_2 \in \mathbb{N}$
and $c \in \mathbb{R}$. 
In Example~\ref{ex:inf_trajectories}, such data can be obtained by applying different inputs to the same initial state.
\begin{remark}[Difference from the finite-dimensional case]
	In the finite-dimensional setting \cite[Section~IV]{Waarde2020},
	the following set $\Sigma_K$ of systems
	was introduced for a fixed
	$K \in \mathcal{L}(X,U)$:
	\begin{align*}
		\Sigma_{K} \coloneqq \{
		(A,B) \in \mathbb{R}^{n \times n} \times \mathbb{R}^{n \times m}:
		&~\text{There exist $M\geq 1$ and $0 \leq \gamma < 1$ such that} \\
		&~\text{$\|(A+BK)^k\| \leq M \gamma^k$ for all $k \in \mathbb{N}_0$}
		\}.
	\end{align*}
	The major difference from $\Sigma_{K,M,\gamma}$
	is that the constants $M$ and $\gamma$ are allowed to depend on $(A,B)$.
	However, it was implicitly shown in \cite[Theorem~16]{Waarde2020} that
	$\Sigma_{\is} \subseteq \Sigma_{K}$ if and only if
	$\Sigma_{\is} \subseteq \Sigma_{K,M,\gamma}$ for some
	constants $M\geq 1$ and $\gamma \in (0,1)$.
	The spectral radius is continuous 
	for matrices, 
	but this property does not hold for general bounded operators; see 
	Kakutani's example in the solution of \cite[Problem~104]{Halmos1982}.
	This is the
	reason why we use $\Sigma_{K,M,\gamma}$
	in the infinite-dimensional setting. 
	The details will be
	provided in the proof of Lemma~\ref{lem:ker_lemma}.
	\hspace*{\fill} $\triangle$ 
\end{remark}

\subsection{Proof of Theorem~\ref{thm:informative_stabilization}}
For the proof of the implication (i) $\Rightarrow $ (ii) and Corollary~\ref{coro:informative_stabilization_SI},
we need a preliminary lemma.
This lemma is the infinite-dimensional version of \cite[Lemma~15]{Waarde2020}.
As in Proposition~\ref{prop:identification}, we 
use the closure of 
the range of synthesis operators, which is different from the finite-dimensional case.
\begin{lemma}
	\label{lem:ker_lemma}
	Suppose that  
	the data $(x_1,x_0,u_0)$
	satisfy Assumptions~\ref{assump:x1x0u0_relation}  and \ref{assump:Bessel}. Let
	$(\Xi_1,\Xi_0,\Upsilon_0)$ be the synthesis operators 
	associated with $(x_1,x_0,u_0)$.
	Assume that
	$K \in \mathcal{L}(X,U)$ satisfies
	$\Sigma_{\is} \subseteq
	\Sigma_{K,M,\gamma}$ for some 
	constants $M\geq 1$ and $\gamma > 0$. Then
	\begin{equation}
		\label{eq:Xi_Up_ran_inclusion}
		\Ran \begin{bmatrix}
			I \\ K
		\end{bmatrix} \subseteq
		\overline{\Ran \begin{bmatrix}
				\Xi_0 \\ \Upsilon_0
		\end{bmatrix}}.
	\end{equation}
\end{lemma}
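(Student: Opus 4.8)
The plan is to argue by contradiction, mirroring the perturbation construction used in the proof of Proposition~\ref{prop:identification}, but now with the aim of producing a pair in $\Sigma_{\is}$ that $K$ fails to stabilize. Write $H\coloneqq\begin{bmatrix}\Xi_0\\\Upsilon_0\end{bmatrix}\in\mathcal{L}(\ell^2(\mathbb{N}),X\times U)$ and suppose \eqref{eq:Xi_Up_ran_inclusion} fails, so there is $x_*\in X$ with $\begin{bmatrix}x_*\\Kx_*\end{bmatrix}\notin\overline{\Ran H}$. Since $\overline{\Ran H}$ is a closed subspace, the projection theorem supplies a nonzero vector $\begin{bmatrix}\xi_0\\\upsilon_0\end{bmatrix}\in(\overline{\Ran H})^{\perp}=(\Ran H)^{\perp}=\Ker H^*$ with $\langle x_*,\xi_0\rangle+\langle Kx_*,\upsilon_0\rangle\neq0$; in particular $\eta\coloneqq\xi_0+K^*\upsilon_0\in X$ is nonzero. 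This is the step where, exactly as in Proposition~\ref{prop:identification}, the possibly non-closed range is handled by passing to orthogonal complements.

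Next I would perturb the true system. For a vector $\zeta\in X$ still to be chosen, set $Ax\coloneqq A_sx+\langle x,\xi_0\rangle\zeta$ $(x\in X)$ and $Bu\coloneqq B_su+\langle u,\upsilon_0\rangle\zeta$ $(u\in U)$; these are bounded rank-one perturbations of $A_s$ and $B_s$. Because $\begin{bmatrix}\xi_0\\\upsilon_0\end{bmatrix}\perp\Ran H$, one gets $A\Xi_0+B\Upsilon_0=\Xi_1$, so $(A,B)\in\Sigma_{\is}$ by Lemma~\ref{lem:Xi1_rep}, for every choice of $\zeta$. A direct computation gives $A+BK=F_0+\langle\cdot,\eta\rangle\zeta$, where $F_0\coloneqq A_s+B_sK$. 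The point is now to choose $\zeta$ so that $A+BK$ has an eigenvalue of modulus exceeding $\gamma$. Fix $\lambda\in\mathbb{C}$ with $|\lambda|>\max\{\gamma,\sr(F_0)\}$, so that $R(\lambda,F_0)\in\mathcal{L}(X)$ and $R(\lambda,F_0)^*=R(\bar\lambda,F_0^*)$ is invertible; hence $w\coloneqq R(\bar\lambda,F_0^*)\eta\neq0$ since $\eta\neq0$, and I would set $\zeta\coloneqq w/\|w\|^2$, so $\langle\zeta,w\rangle=1$. Then $v\coloneqq R(\lambda,F_0)\zeta$ is nonzero, satisfies $\langle v,\eta\rangle=\langle\zeta,R(\bar\lambda,F_0^*)\eta\rangle=1$, and therefore $(A+BK)v=F_0v+\langle v,\eta\rangle\zeta=F_0v+(\lambda I-F_0)v=\lambda v$.

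Consequently $|\lambda|^k\leq\|(A+BK)^k\|$ for all $k\in\mathbb{N}_0$. But $(A,B)\in\Sigma_{\is}\subseteq\Sigma_{K,M,\gamma}$ forces $\|(A+BK)^k\|\leq M\gamma^k$, hence $(|\lambda|/\gamma)^k\leq M$ for all $k$, which is impossible since $|\lambda|>\gamma>0$. This contradiction proves \eqref{eq:Xi_Up_ran_inclusion}. It is also here that working with $\Sigma_{K,M,\gamma}$ matters: the argument uses only the bound $\|(A+BK)^k\|\leq M\gamma^k$ for the single constructed pair $(A,B)$ (equivalently $\sr(A+BK)\leq\gamma$), and this is contradicted outright by exhibiting one eigenvalue of modulus $>\gamma$, with no appeal to continuity of the spectral radius.

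The main obstacle is the construction of $\zeta$. Membership $(A,B)\in\Sigma_{\is}$ forces the perturbation to annihilate $\Ran H$, which pins it down to the rank-one form above; simultaneously we need $A+BK$ to acquire an eigenvalue of prescribed large modulus. Reconciling these two requirements is exactly what the resolvent identity $\zeta=(\lambda I-F_0)v$ together with the normalization $\langle v,\eta\rangle=1$ achieves, and it relies crucially on $\eta\neq0$, i.e., on $\begin{bmatrix}I\\K\end{bmatrix}$ reaching outside $\overline{\Ran H}$. Relative to the finite-dimensional argument in \cite[Lemma~15]{Waarde2020}, the genuinely new ingredients are the passage to closures of ranges and the replacement of an eigenvector-based perturbation by this resolvent-based one, since $F_0$ need not be diagonalizable.
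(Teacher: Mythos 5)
Your proof is correct, but it takes a genuinely different route from the paper's. The paper argues through the auxiliary set $\Sigma_{\is}^0$ of homogeneous-compatible pairs: it first shows $\sr(A_0+B_0K)=0$ for every $(A_0,B_0)\in\Sigma_{\is}^0$ by considering the family $(A+nA_0,B+nB_0)\in\Sigma_{\is}$, invoking the Lubich--Nevanlinna resolvent estimate (with a bound uniform in $n$) and Vitali's theorem to rule out $\sr(A_0+B_0K)>0$; it then upgrades spectral radius zero to the exact identity $A_0+B_0K=0$ via the self-adjointness trick applied to $(A_0+B_0K)^*(A_0+B_0K)$, and finally feeds a rank-one pair built from a kernel vector of $\begin{bmatrix}\Xi_0^* & \Upsilon_0^*\end{bmatrix}$ into this identity. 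You instead argue directly by contradiction: from a failure of the inclusion you extract $\eta=\xi_0+K^*\upsilon_0\neq 0$ and construct a \emph{single} rank-one perturbation of $(A_s,B_s)$, still in $\Sigma_{\is}$, whose closed loop $F_0+\langle\cdot,\eta\rangle\zeta$ acquires a genuine eigenvalue $\lambda$ of prescribed modulus $|\lambda|>\gamma$ by the standard rank-one resolvent identity (choosing $\zeta$ proportional to $R(\bar\lambda,F_0^*)\eta$ so that $\langle R(\lambda,F_0)\zeta,\eta\rangle=1$). This contradicts $\|(A+BK)^k\|\leq M\gamma^k$ outright. Your argument is more elementary --- it dispenses with Vitali's theorem, the uniform resolvent bounds, and the self-adjointness step --- and it exploits the fact that the perturbation direction $\zeta$ is unconstrained by membership in $\Sigma_{\is}$, so the eigenvalue can be placed explicitly. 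What the paper's longer detour buys is the stronger intermediate fact $A_0+B_0K=0$ for all of $\Sigma_{\is}^0$, which is the exact analogue of the finite-dimensional Lemma~15 of the cited reference; your version delivers only what the lemma asserts, which is all that is needed here. All the individual steps you use check out: the perturbed pair lies in $\Sigma_{\is}$ for any $\zeta$ because $\begin{bmatrix}\xi_0\\ \upsilon_0\end{bmatrix}\perp\Ran H$, the vector $w=R(\bar\lambda,F_0^*)\eta$ is nonzero because the resolvent is injective and $\eta\neq0$, and the eigenvalue computation $(A+BK)v=F_0v+(\lambda I-F_0)v=\lambda v$ is exact.
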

\begin{proof}
	Given the data $(x_0,u_0)$, we 
	define 
	the set $\Sigma_{\is}^0$ of systems by
	\[
	\Sigma_{\is}^0 \coloneqq  \{
	(A,B) \in \mathcal{L}(X) \times \mathcal{L}(U,X):
	\text{$0 =  Ax_0(k)+Bu_0(k)$ for all
		$k \in \mathbb{N}$}
	\}.
	\]
	We will show that 
	\begin{equation}
		\label{eq:spe_radius}
		\sr(A_0+B_0K) =0\quad \text{for all $(A_0,B_0)
			\in \Sigma_{\is}^0$}.
	\end{equation}
	Let $(A,B) \in \Sigma_{\is}$ and $(A_0,B_0)
	\in \Sigma_{\is}^0$.
	Define $F \coloneqq A+BK$ and $F_0 \coloneqq A_0+B_0K$.
	Then 
	for all $n \in \mathbb{N}$, we obtain 
	$(A+nA_0, B+nB_0) \in \Sigma_{\is}$
	and 
	\[
	F + n F_0 = (A+nA_0) + (B+nB_0)K.
	\]
	Therefore, by assumption,
	\begin{equation}
		\label{eq:FnF0_power}
		\|(F + n F_0)^k\| \leq M \gamma^k
	\end{equation}
	for all $n \in \mathbb{N}$ and $k \in \mathbb{N}_0$.
	
	Assume, to reach a contradiction, that $\sr(F_0)>0$.
	Then there exists $n_0 \in \mathbb{N}$ such that
	$\sr(F_0) > \gamma/n_0.$
	For $n \in \mathbb{N}$, define $G_n \in \mathcal{L}(X)$ by
	\[
	G_n \coloneqq \frac{F}{n} +F_0.
	\]
	Since \eqref{eq:FnF0_power} yields
	\[
	\|G_n^k\| \leq M \left( \frac{\gamma}{n}\right)^k
	\]
	for all $n \in \mathbb{N}$ and $k \in \mathbb{N}_0$,
	it follows from
	\cite[Theorem~2.1]{Lubich1991} 
	that for all $n \in \mathbb{N}$,
	\[
	\|R(\lambda, G_n)\| \leq \frac{M}{|\lambda| - \gamma/n}
	\]
	whenever $|\lambda | > \gamma/n$; see also
	\cite[Proposition~II.1.10]{Eisner2010}.
	Note that the constant $M$ does not 
	depend on $n$.
	For all $\lambda \in \mathbb{E}_{\sr(F_0)}$ and 
	$n \geq n_0$,
	\[
	R(\lambda,G_n) - R(\lambda,F_0) =
	\frac{R(\lambda,G_n) FR(\lambda,F_0)}{n},
	\]
	and hence
	\begin{equation}
		\label{eq:Rel_F0_conv}
		R(\lambda,F_0) = \lim_{n \to \infty}
		R(\lambda,G_n).
	\end{equation}
	By Vitali's theorem
	(see, e.g., \cite[Theorem~A.5]{Arendt2001}),
	there exists a holomorphic function $f\colon
	\mathbb{E}_{\gamma/n_0} \to X$ such that 
	\begin{equation}
		\label{eq:f_conv}
		f(\lambda) = \lim_{n \to \infty}
		R(\lambda,G_n) 
	\end{equation}
	for all $\lambda \in \mathbb{E}_{\gamma/n_0}$.
	There
	exists $\lambda_{\max} \in \sigma(F_0)$ such that 
	$|\lambda_{\max}| = \sr(F_0)$.
	Take a sequence $(\lambda_k)_{k \in \mathbb{N}}$
	in $\mathbb{E}_{\sr(F_0)}$ such that 
	$\lambda_k \to \lambda_{\max}$ as $k\to \infty$.
	By \eqref{eq:Rel_F0_conv} and 
	\eqref{eq:f_conv}, we obtain
	\begin{equation}
		\label{eq:f_lambdan_conv}
		\|f(\lambda_k)\| = 
		\|R(\lambda_k,F_0)\| \to \infty\quad 
		(k \to \infty);
	\end{equation}
	see, e.g., \cite[Proposition~IV.1.3]{Engel2000}.
	On the other hand, $\lambda_{\max} \in \mathbb{E}_{\gamma/n_0}$, and hence
	\[
	\lim_{k \to \infty}
	\|f(\lambda_k)\| = \|f(\lambda_{\max})\| < \infty,
	\]
	which contradicts \eqref{eq:f_lambdan_conv}.
	Thus, \eqref{eq:spe_radius} is obtained.
	
	Let $(A_0,B_0)
	\in \Sigma_{\is}^0$. Then
	\[
	\big((A_0+B_0K)^*A_0,(A_0+B_0K)^*B_0 \big)
	\in \Sigma_{\is}^0.
	\]
	Since $(A_0+B_0K)^*(A_0+B_0K)$ is a self-adjoint operator,
	it follows from \eqref{eq:spe_radius} that
	\[
	\|(A_0+B_0K)^*(A_0+B_0K)\| = 
	\sr\big((A_0+B_0K)^*(A_0+B_0K)\big) = 0.
	\]
	Therefore, $(A_0,B_0)
	\in \Sigma_{\is}^0$ implies that $A_0+B_0K = 0$.
	
	The desired 
	inclusion \eqref{eq:Xi_Up_ran_inclusion} is equivalent to
	\begin{equation}
		\label{eq:Xi_Up_ker_inc}
		\Ker 
		\begin{bmatrix}
			\Xi_0^* & \Upsilon_0^*
		\end{bmatrix} \subseteq
		\Ker 
		\begin{bmatrix}
			I & K^*
		\end{bmatrix}.
	\end{equation}
	To show the inclusion \eqref{eq:Xi_Up_ker_inc},
	let 
	\[
	\begin{bmatrix}
		\xi_0 \\ \upsilon_0
	\end{bmatrix} \in \Ker
	\begin{bmatrix}
		\Xi_0^* & \Upsilon_0^*
	\end{bmatrix}.
	\]
	Take $\zeta \in X \setminus \{0 \}$
	arbitrarily.
	Define 
	$A_0 \in \mathcal{L}(X)$ and 
	$B_0 \in \mathcal{L}(U,X)$ by
	$
	A_0 x \coloneqq \langle x,\xi_0 \rangle \zeta
	$ for $x\in X$
	and  
	$B_0 u \coloneqq \langle u,\upsilon_0 \rangle \zeta$
	for $u \in U$.
	Then
	\[
	A_0\Xi_0w + B_0 \Upsilon_0w =
	\langle \Xi_0w,\xi_0\rangle \zeta + \langle \Upsilon_0 w, \upsilon_0\rangle \zeta =
	\langle w, \Xi_0^* \xi_0 + \Upsilon_0^* \upsilon_0\rangle 
	\zeta = 0
	\]
	for all $w \in \ell^2(\mathbb{N})$.
	Hence, $(A_0,B_0) \in \Sigma_{\is}^0$ by Lemma~\ref{lem:Xi1_rep}
	with $x_1(k) \equiv 0$.
	Since $A_0+B_0K = 0$ by the above argument,
	we obtain
	\[
	0 = 
	(A_0+B_0K)x =
	\langle x,\xi_0\rangle \zeta + \langle Kx,\upsilon_0\rangle \zeta =
	\langle x,\xi_0+K^*\upsilon_0\rangle \zeta
	\]
	for all $x \in X$.
	This yields
	$\xi_0+K^*\upsilon_0 = 0$, i.e.,
	\[
	\begin{bmatrix}
		\xi_0 \\ \upsilon_0
	\end{bmatrix} \in \Ker
	\begin{bmatrix}
		I & K^*
	\end{bmatrix}.
	\]
	Thus, the inclusion~\eqref{eq:Xi_Up_ker_inc} holds.
\end{proof}

We now give the proof of Theorem~\ref{thm:informative_stabilization}.
As in the case of system identification studied in Proposition~\ref{prop:identification},
we employ the density argument together with
the boundedness of the operators $A$, $B$, and $K$.
For simplicity of notation, we define 
$G  \in \mathcal{L}(X, X \times U) $ and 
$H \in \mathcal{L}(\ell^2(\mathbb{N}), X \times U) $
by
\begin{equation}
	\label{eq:GH_def}
	G \coloneqq \begin{bmatrix}
		I \\ K
	\end{bmatrix} \quad \text{and} \quad 
	H \coloneqq \begin{bmatrix}
		\Xi_0 \\ \Upsilon_0
	\end{bmatrix}
\end{equation}
for $K \in \mathcal{L}(X,U)$,
$\Xi_0 \in \mathcal{L}(\ell^2(\mathbb{N}), X)$, and 
$\Upsilon_0 \in \mathcal{L}(\ell^2(\mathbb{N}), U)$.
\begin{proof}[Proof of Theorem~\ref{thm:informative_stabilization}]
	First, we prove the implication (ii) $\Rightarrow$ (i).
	By Lemma~\ref{lem:Xi1_rep} and condition~a),
	\begin{equation}
		\label{eq:Xi1Xi0_dagger}
		\Xi_1 \Xi_0^{\dagger}x = 
		(A\Xi_0 + B\Upsilon_0)  \Xi_0^{\dagger}x = 
		(A+B\Upsilon_0\Xi_0^{\dagger}) x
	\end{equation}
	for all $(A,B) \in \Sigma_{\is}$ and 
	$x \in D(\Xi_0^{\dagger})$.
	Let $K \in \mathcal{L}(X,U)$
	be the extension of
	$\Upsilon_0 \Xi_0^{\dagger}\colon
	D(\Xi_0^{\dagger}) \to U$,
	where condition~b) was used.
	By \eqref{eq:Xi1Xi0_dagger}
	and the density of $D(\Xi_0^{\dagger}) $
	in $X$,
	condition~c) implies that there exists $M \geq 1$ such that
	$
	\|(A+BK)^k\| \leq M \gamma^k
	$
	for all $(A,B) \in \Sigma_{\is}$ and $k \in \mathbb{N}$.
	Since $\Sigma_{\is} \subseteq \Sigma_{K,M,\gamma}$,
	it follows that 
	the data $(x_1,x_0,u_0)$  are
	informative for stabilization with decay rate $\gamma$.
	By condition~a) and the definition of $K$,
	we have 
	\[
	\begin{bmatrix}
		I \\ K
	\end{bmatrix} x = 
	\begin{bmatrix}
		\Xi_0 \\ \Upsilon_0
	\end{bmatrix} \Xi_0^{\dagger}x \in \Ran \begin{bmatrix}
		\Xi_0 \\ \Upsilon_0
	\end{bmatrix}
	\]
	for all $x  \in D(\Xi_0^{\dagger})= \Ran (\Xi_0)$. Hence,
	\eqref{eq:im_Xi0_inclusion} is satisfied.
	
	Next, we prove the implication (i) $\Rightarrow$ (ii).
	By informativity for stabilization and Lemma~\ref{lem:ker_lemma}, we obtain
	$\overline{\Ran \Xi_0} =X$.
	Let $G  \in \mathcal{L}(X, X \times U) $ and 
	$H \in \mathcal{L}(\ell^2(\mathbb{N}), X \times U) $
	be defined as in \eqref{eq:GH_def}, where
	$K \in \mathcal{L}(X,U)$ is a stabilizing gain 
	with decay rate $\gamma$ for $(x_1,x_0,u_0)$
	satisfying \eqref{eq:im_Xi0_inclusion}.
	Define $H_0 \coloneqq H|_{(\Ker H)^{\perp}}$.
	Then $H_0$ is injective and has an algebraic inverse $H_0^{-1} \colon X\times U \to (\Ker H)^{\perp}$
	with domain $D(H_0^{-1}) \coloneqq \Ran H$. 
	For all $x \in D(H_0^{-1})$, we obtain 
	\begin{equation}
		\label{eq:HH_0}
		H H_0^{-1}x = H_0H_0^{-1}x = x.
	\end{equation}
	Noting that
	\eqref{eq:im_Xi0_inclusion} gives
	$
	Gx \in
	\Ran 
	H = D(H_0^{-1})
	$
	for all $x \in \Ran \Xi_0$,
	we define the densely-defined operator $\Xi \colon X \to \ell^2(\mathbb{N})$ by
	\begin{align*}
		\Xi x&\coloneqq H_0^{-1} Gx,\quad x \in D(\Xi)
		\coloneqq \Ran \Xi_0.
	\end{align*}
	By \eqref{eq:HH_0}, 
	for all $x \in D(\Xi)$,
	\[
	\begin{bmatrix}
		x \\ Kx
	\end{bmatrix}=
	G x = H
	\Xi  x =
	\begin{bmatrix}
		\Xi_0\Xi x \\ \Upsilon_0\Xi x
	\end{bmatrix}.
	\]
	This implies that 
	condition~a)  is satisfied.
	By the density of $D(\Xi) = \Ran \Xi_0$ in $X$,
	condition~b) is also satisfied.
	Finally, let $(A,B) \in \Sigma_{\is}$. Since 
	Lemma~\ref{lem:Xi1_rep} shows that
	\[
	(A+BK)x =
	(A\Xi_0 + B\Upsilon_0)\Xi  x = 
	\Xi_1 \Xi x
	\]
	for all $x \in D(\Xi)$,
	the operator 
	$\Xi_1\Xi$ can be extended to
	$A+BK \in \mathcal{L}(X)$
	by the density of $D(\Xi)$ in $X$.
	Thus, condition~c) is satisfied
	by the power stability of $A+BK$.
\end{proof}

\subsection{Proof of Corollary~\ref{coro:informative_stabilization_SI}
	and  comments}
We base the proof of Corollary~\ref{coro:informative_stabilization_SI}
on the following lemma on the inclusion~\eqref{eq:im_Xi0_inclusion}.

\begin{lemma}
	\label{lem:suff_cond_for_inclusion}
	Let
	$\Xi_0 \in \mathcal{L}(\ell^2(\mathbb{N}), X)$, 
	$\Upsilon_0 \in \mathcal{L}(\ell^2(\mathbb{N}), U)$, and 
	$K \in \mathcal{L}(X,U)$. Define
	$G  \in \mathcal{L}(X, X \times U) $ and 
	$H \in \mathcal{L}(\ell^2(\mathbb{N}), X \times U) $
	by \eqref{eq:GH_def}.
	Then
	the following statements hold:
	\begin{enumerate}[label=\upshape\alph*), leftmargin=*, widest=b]
		\item $\Ran (G\Xi_0) \subseteq \Ran H$  if and only if
		$
		\Ran(K \Xi_0  - \Upsilon_0 ) \subseteq \Upsilon_0(\Ker \Xi_0)
		$.
		\item 
		Suppose that 
		$\Ran (G\Xi_0) \subseteq \overline{\Ran H}$.
		If $\dim \Ran \Upsilon_0 < \infty$ and 
		$\Upsilon_0(\Ker \Xi_0) = \Ran \Upsilon_0$, then 
		$\Ran (G\Xi_0) \subseteq \Ran H$.
		In particular, if $\dim U = 1$ and 
		$\Ker \Xi_0 \nsubseteq \Ker \Upsilon_0$, then $\Ran (G\Xi_0) \subseteq \Ran H$.
	\end{enumerate}
\end{lemma}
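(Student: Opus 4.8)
The plan is to treat the two parts separately, deriving b) from a) via a closedness argument.

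\textbf{Part a).} I would unwind the definitions directly. For $w \in \ell^2(\mathbb{N})$ the vector $G\Xi_0 w$ is the pair $(\Xi_0 w, K\Xi_0 w) \in X \times U$, so $G\Xi_0 w \in \Ran H$ amounts to the existence of $v \in \ell^2(\mathbb{N})$ with $\Xi_0 v = \Xi_0 w$ and $\Upsilon_0 v = K\Xi_0 w$. Setting $v = w + z$, the first identity says exactly $z \in \Ker\Xi_0$, and then the second becomes $(K\Xi_0 - \Upsilon_0)w = \Upsilon_0 z$. Thus $G\Xi_0 w \in \Ran H$ if and only if $(K\Xi_0 - \Upsilon_0)w \in \Upsilon_0(\Ker\Xi_0)$; quantifying over $w \in \ell^2(\mathbb{N})$ gives the asserted equivalence, and both implications are immediate from this chain.

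\textbf{Part b).} Using a) together with the hypothesis $\Upsilon_0(\Ker\Xi_0) = \Ran\Upsilon_0$, it suffices to show $\Ran(K\Xi_0 - \Upsilon_0) \subseteq \Ran\Upsilon_0$; and since $\Upsilon_0 w \in \Ran\Upsilon_0$ always, this reduces to showing $K\Xi_0 w \in \Ran\Upsilon_0$ for every $w \in \ell^2(\mathbb{N})$. Here I would fix $w$, invoke $\Ran(G\Xi_0) \subseteq \overline{\Ran H}$ to obtain a sequence $(v_n)_{n \in \mathbb{N}}$ in $\ell^2(\mathbb{N})$ with $\Xi_0 v_n \to \Xi_0 w$ and $\Upsilon_0 v_n \to K\Xi_0 w$, and then observe that $\Upsilon_0 v_n \in \Ran\Upsilon_0$ for each $n$ while $\Ran\Upsilon_0$ is closed, being finite-dimensional; hence the limit $K\Xi_0 w$ lies in $\Ran\Upsilon_0$. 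For the final claim I would note that $\dim U = 1$ forces $\dim\Ran\Upsilon_0 \le 1 < \infty$, and that $\Ker\Xi_0 \nsubseteq \Ker\Upsilon_0$ yields some $z \in \Ker\Xi_0$ with $\Upsilon_0 z \ne 0$; since this vector spans the one-dimensional space $U$, we get $U = \Upsilon_0(\Ker\Xi_0) \subseteq \Ran\Upsilon_0 \subseteq U$, so $\Upsilon_0(\Ker\Xi_0) = \Ran\Upsilon_0$ and the general case applies.

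I do not expect a genuine obstacle: the argument is bookkeeping once part a) isolates the condition $\Ran(K\Xi_0 - \Upsilon_0) \subseteq \Upsilon_0(\Ker\Xi_0)$. The one place a hypothesis does essential work is the limit step in b), where finite-dimensionality of $\Ran\Upsilon_0$ is exactly what makes it closed; without it one could only conclude $K\Xi_0 w \in \overline{\Ran\Upsilon_0}$, which would not suffice.
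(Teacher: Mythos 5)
Your proposal is correct and follows essentially the same route as the paper's proof: part a) by direct unwinding of the range conditions with the substitution $v = w + z$, $z \in \Ker\Xi_0$, and part b) by reducing to $K\Xi_0 w \in \Ran\Upsilon_0$ via the closedness of the finite-dimensional range and then invoking a). The handling of the $\dim U = 1$ case is also the same as in the paper.
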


\begin{proof}
	a) 
	Suppose that $\Ran (G\Xi_0) \subseteq \Ran H$.
	Let $w \in \ell^2(\mathbb{N})$.
	By assumption,
	$
	\Xi_0 w = \Xi_0 \mu
	$ and 
	$
	K \Xi_0 w = \Upsilon_0 \mu
	$
	for some $\mu \in \ell^2(\mathbb{N})$.
	Since $\Xi_0 (\mu-w) = 0$, we have 
	$\mu-w \in \Ker \Xi_0$.
	This yields
	\[
	K\Xi_0 w - \Upsilon_0 w =
	\Upsilon_0 (\mu -w) \in \Upsilon_0 (\Ker \Xi_0).
	\]
	
	Conversely,
	suppose that $\Ran
	(K \Xi_0  - \Upsilon_0 ) \subseteq \Upsilon_0(\Ker \Xi_0)
	$.  Let $w \in \ell^2(\mathbb{N})$.
	There exists $\mu \in \Ker \Xi_0$ such that 
	$
	K\Xi_0 w - \Upsilon_0 w = \Upsilon_0 \mu.
	$
	Then we obtain
	$\Xi_0 w = \Xi_0 (w+\mu) $ and 
	$K\Xi_0 w = \Upsilon_0 (w+\mu)$.
	This implies that $G\Xi_0 w \in \Ran H$.
	
	b) Let $w \in \ell^2(\mathbb{N})$.
	By assumption, we have
	$
	G \Xi_0 w \in 
	\overline{
		\Ran 
		H
	}.
	$
	There exists a sequence 
	$(\mu_n)_{n \in \mathbb{N}}$ in 
	$\ell^2(\mathbb{N})$
	such that 
	\[
	\begin{bmatrix}
		I \\ K
	\end{bmatrix} \Xi_0 w =
	\lim_{n \to \infty}
	\begin{bmatrix}
		\Xi_0 \\ \Upsilon_0
	\end{bmatrix} \mu_n.
	\]
	It follows from $\dim \Ran \Upsilon_0 < \infty$ that
	$\Ran \Upsilon_0$ is closed. Hence,
	$K\Xi_0w \in \Ran \Upsilon_0$, which together with
	$\Upsilon_0(\Ker \Xi_0) = \Ran \Upsilon_0$ 
	yields
	\[
	K \Xi_0 w - \Upsilon_0 w \in \Ran \Upsilon_0 = \Upsilon_0(\Ker \Xi_0).
	\]
	Thus, we obtain 
	$\Ran (G\Xi_0) \subseteq \Ran H$  by statement~a).
	In particular, 
	if $\dim U = 1$ and 
	$\Ker \Xi_0 \nsubseteq \Ker \Upsilon_0$, then
	$\Upsilon_0(\Ker \Xi_0) = \Ran \Upsilon_0$, and hence
	$\Ran (G\Xi_0) \subseteq \Ran H$. 
\end{proof}

Now we are able to give the following proof of 
Corollary~\ref{coro:informative_stabilization_SI}.
\begin{proof}[Proof of Corollary~\ref{coro:informative_stabilization_SI}]
	Assume that 
	$\Ker \Xi_0 \setminus \{u_0\}^{\perp} \neq 
	\emptyset$ or that
	the data $(x_1,x_0,u_0)$ are not
	informative for system identification.
	By Theorem~\ref{thm:informative_stabilization},
	it is enough to show that 
	every stabilizing gain $K \in \mathcal{L}(X,U)$
	with decay rate $\gamma$
	for
	$(x_1,x_0,u_0)$ satisfies the inclusion~\eqref{eq:im_Xi0_inclusion}.
	
	Let $K \in \mathcal{L}(X,U)$ be a stabilizing gain 
	with decay rate $\gamma$
	for
	$(x_1,x_0,u_0)$.
	Let $\xi \in X$ and $\upsilon \in U$. 
	Define 	$G  \in \mathcal{L}(X, X \times U) $ and 
	$H \in \mathcal{L}(\ell^2(\mathbb{N}), X \times U) $ by
	\eqref{eq:GH_def}.
	Then
	\[
	\left\langle 
	Gx, \begin{bmatrix}
		\xi \\ \upsilon 
	\end{bmatrix}
	\right\rangle =
	\langle \xi,x\rangle  + \langle \upsilon,Kx\rangle  = 0
	\]
	for all $x \in X$ if and only if $\xi+K^*\upsilon = 0$.
	This implies that
	$(\Ran G)^{\perp}$ is given by
	\[
	(\Ran G)^{\perp} = 
	\left\{
	\begin{bmatrix}
		-K^*\upsilon \\ \upsilon
	\end{bmatrix}:
	\upsilon \in U
	\right\},
	\]
	and hence $\dim\,(\Ran G)^{\perp} = 1$.
	By Lemma~\ref{lem:ker_lemma},
	we obtain
	$\Ran G \subseteq \overline{\Ran H}$ and hence
	$(\overline{\Ran H })^{\perp} \subseteq (\Ran G)^{\perp}$.
	This yields 
	\[
	(\overline{\Ran H })^{\perp} = \{ 0\}
	\quad \text{or}
	\quad
	(\overline{\Ran H })^{\perp} = (\Ran G)^{\perp}.
	\]

	First, we suppose that  $(\overline{\Ran H })^{\perp} = \{ 0\}$. Then
	$
	\overline{\Ran H} = X\times U
	$, and
	Proposition~\ref{prop:identification} shows that
	$(x_1,x_0,u_0)$ are
	informative for system identification. Hence,
	$\Ker \Xi_0 \setminus \{u_0\}^{\perp} \neq 
	\emptyset$ by assumption.
	Since $\Ker \Upsilon_0 =  \{u_0\}^{\perp}$,
	we obtain $\Ker \Xi_0 \nsubseteq \Ker \Upsilon_0$.
	Hence,
	\eqref{eq:im_Xi0_inclusion}
	follows from Lemma~\ref{lem:suff_cond_for_inclusion}.b).
	
	Next, we suppose that
	$(\overline{\Ran H })^{\perp} = (\Ran G)^{\perp}$. 
	Since $\Ran G$ is closed, 
	we obtain
	\[
	\overline{\Ran H } = \big((\overline{\Ran H })^{\perp} \big)^{\perp} =
	\big( (\Ran G)^{\perp}\big)^{\perp} = \Ran G.
	\]
	Therefore,
	for all $w \in \ell^2(\mathbb{N})$, 
	there exists $x \in X$ such that 
	$x = \Xi_0 w$ and $Kx = \Upsilon_0 w$.
	This yields $K \Xi_0 = \Upsilon_0$, and thus
	\eqref{eq:im_Xi0_inclusion} holds.
\end{proof}

From the proof of 
Theorem~\ref{thm:informative_stabilization} and 
Corollary~\ref{coro:informative_stabilization_SI},
we also obtain the structure of stabilizing
gains as shown in the following 
remark.
\begin{remark}[Structure of stabilizing gains]
	\label{rem:structure_gain}
	Let the assumptions of 
	Corollary~\ref{coro:informative_stabilization_SI} be satisfied, and let $\gamma \in (0,1)$.
	If statement~(ii) of Theorem~\ref{thm:informative_stabilization} holds, then
	the extension of $\Upsilon_0\Xi_0^{\dagger}$ is a 
	stabilizing gain with decay rate $\gamma$ for $(x_1,x_0,u_0)$.
	Conversely, every stabilizing
	gain with decay rate $\gamma$ for $(x_1,x_0,u_0)$
	is the extension of
	$\Upsilon_0\Xi_0^{\dagger}$ for some right inverse $\Xi_0^{\dagger}$ 
	of $\Xi_0$ satisfying
	the properties given in statement~(ii) of Theorem~\ref{thm:informative_stabilization}.
	\hspace*{\fill} $\triangle$ 
\end{remark}

To omit the condition~\eqref{eq:im_Xi0_inclusion},
we instead assume in Corollary~\ref{coro:informative_stabilization_SI} that 
$\Ker \Xi_0 \setminus \{u_0\}^{\perp} \neq 
\emptyset$ or that 
$(x_1,x_0,u_0)$ are not
informative for system identification.
One may ask whether informativity for stabilization alone
implies the condition~\eqref{eq:im_Xi0_inclusion} without this assumption.
The following example shows that the answer to this question is negative.

\begin{example}
	Let
	$X = \ell^2(\mathbb{N})$ and $U = \mathbb{C}$.
	Take
	$x_1(k) = 0$,
	$x_0(k) = e_k/k$, and 
	$u_0(k) = 1/k^{3/2}$
	for $k \in \mathbb{N}$.
	Then $x_1$, $x_0$, and 
	$u_0 $ are Bessel sequences.
	Let $\Xi_0$ and $\Upsilon_0$ be the 
	synthesis operators of $x_0$ and $u_0$, respectively.
	We will show that the following statements hold:
	\begin{enumerate}[label=\upshape\alph*), leftmargin=*, widest=b]
		\item 
		$\Ker \Xi_0 \setminus \{u_0\}^{\perp} =
		\emptyset$, and
		$(x_1,x_0,u_0)$ are
		informative for system identification.
		\item $(x_1,x_0,u_0)$ are
		informative for stabilization 
		with all decay rates in $(0,1)$.
		\item 
		There does not exist $K \in \mathcal{L}(X,U)$
		such that \eqref{eq:im_Xi0_inclusion} holds.
	\end{enumerate}

	Since $\Ker \Xi_0 = \{0\}$, we obtain
	$\Ker \Xi_0 \setminus \{u_0\}^{\perp} =
	\emptyset$.
	To show that $(x_1,x_0,u_0)$ are
	informative for system identification, 
	it suffices to prove that \eqref{eq:identification} holds,
	which is equivalent to
	\begin{equation}
		\label{eq:kernel_XiUp}
		\Ker \begin{bmatrix}
			\Xi_0^* & \Upsilon_0^*
		\end{bmatrix} = \{0 \}.
	\end{equation}
	Let $\xi = (\xi_n)_{n \in \mathbb{N}}\in X$ and $\upsilon \in U$.
	For all $n \in \mathbb{N}$,
	the $n$th element of 
	$
	\Xi_0^*\xi + \Upsilon_0^*\upsilon
	$
	can be written as
	\[
	\frac{\xi_n}{n} + \frac{\upsilon}{n^{3/2}}.
	\]
	Therefore, if $\Xi_0^*\xi + \Upsilon_0^*\upsilon= 0$, then
	$
	\xi_n = - \upsilon/n^{1/2}
	$
	for all $n \in \mathbb{N}$. It follows that $\xi \notin X$
	if $\upsilon \neq 0$.
	Hence,
	$\Xi_0^*\xi + \Upsilon_0^*\upsilon = 0$ implies that
	$\xi = 0$ and $\upsilon = 0$. Since \eqref{eq:kernel_XiUp} holds, we conclude that 
	statement~a) holds.
	
	Since 	$x_1(k) = 0$
	for all $k \in \mathbb{N}$, we have
	$(0,0) \in \Sigma_{\is}$.
	It follows from informativity for system identification that
	$\Sigma_{\is} = \{(0,0)\}$. Hence, 
	$
	\Sigma_{\is} 
	\subseteq \Sigma_{K,M,\gamma}$ is satisfied for 
	all 
	$K \in \mathcal{L}(X,U)$, $M\geq 1$, and $\gamma \in (0,1)$.
	This implies that 
	statement~b) holds.

	If some $K \in \mathcal{L}(X,U)$
	satisfies \eqref{eq:im_Xi0_inclusion}, 
	then $K\Xi_0 = \Upsilon_0$ by 
	Lemma~\ref{lem:suff_cond_for_inclusion}.a)
	and 
	$\Upsilon_0(\Ker \Xi_0) =  \{ 0\}$.
	Hence, for all $\xi = (\xi_n)_{n \in \mathbb{N}} \in \Ran \Xi_0$,
	\[
	K\xi = 
	\sum_{n=1}^{\infty} \frac{\xi_n}{n^{1/2}}. 
	\]
	This contradicts $K \in \mathcal{L}(X,U)$.
	Thus, statement~c) holds.
	\hspace*{\fill} $\triangle$ 
\end{example}

\section{Informativity for stabilization in the presence of noise}
\label{sec:noise}
The range condition $\overline{\Ran \Xi_0} = X$ in statement~(ii) of Theorem~\ref{thm:informative_stabilization}
is not robust to noise in data, as the following simple example
shows.
Let $X = \ell^2(\mathbb{N})$ and let
$x_0(k) = e_k/k$ for $k \in \mathbb{N}$.
Then 
the synthesis operator $\Xi_0$ of the Bessel sequence $(x_0(k))_{k \in \mathbb{N}}$
satisfies 
$\overline{\Ran \Xi_0} = X$.
Fix $k_0 \in \mathbb{N}$ and introduce the noise $\delta_0 \in X$ 
defined by
\[
\delta_0(k) \coloneqq 
\begin{cases}
	0, & k\neq k_0, \\
	-x_0(k_0),& k = k_0.
\end{cases}
\]
Then the synthesis operator  associated with
the noisy data $x_0+\delta_0$ does not 
satisfy the range condition.
Since $\|\delta_0\| = 1/k_0$, 
arbitrarily small noise can destroy the range condition 
by choosing $k_0 \in \mathbb{N}$ sufficiently large.

In this section, we derive a sufficient condition 
for data informativity that is robust to noise.
Let 
$\widetilde x_1 = (\widetilde x_1(k))_{k \in \mathbb{N}}$,
$\widetilde x_0 = (\widetilde x_0(k))_{k \in \mathbb{N}}$,
$\delta_1 = (\delta_1(k))_{k \in \mathbb{N}}$, and
$\delta_0 = (\delta_0(k))_{k \in \mathbb{N}}$ be sequences in $X$. Let
$\widetilde u_0 = (\widetilde u_0(k))_{k \in \mathbb{N}}$ and
$\theta_0 = (\theta_0(k))_{k \in \mathbb{N}}$ be sequences in $U$.
The sequences $(\widetilde x_1,\widetilde x_0,\widetilde u_0)$
are the noisy data measured instead of the noise-free
data $(x_1,x_0,u_0)$ considered in Section~\ref{sec:stabilization}.
The sequences $(\delta_1,\delta_0,\theta_0)$ are the noise
signals included in
$(\widetilde x_1,\widetilde x_0,\widetilde u_0)$.
Here we consider the situation where only the noisy data
$(\widetilde x_1, \widetilde x_0, \widetilde u_0)$ are available.

As in Assumption~\ref{assump:Bessel},
we make the following assumption on 
$(\widetilde x_1, \widetilde x_0, \widetilde u_0)$ and 
$(\delta_1,\delta_0,\theta_0)$.
\begin{assumption}
	\label{assump:Bessel_noise}
	Each component of 
	the noisy data $(\widetilde x_1, \widetilde x_0, \widetilde u_0)$ and
	the noise $(\delta_1,\delta_0,\theta_0)$
	is a Bessel sequence, 
	i.e., 
	$\widetilde x_1$,
	$\widetilde x_0$,
	$\delta_1$, and
	$\delta_0$ are Bessel sequences in $X$, and
	$\widetilde u_0$ and
	$\theta_0$ are Bessel sequences in $U$.
\end{assumption}

Under Assumption~\ref{assump:Bessel_noise},
the synthesis operators 
associated with $(\widetilde x_1, \widetilde x_0, \widetilde u_0)$
and $(\delta_1,\delta_0,\theta_0)$
are denoted by
$(\widetilde \Xi_1, \widetilde \Xi_0, \widetilde \Upsilon_0)$ and $(\Delta_1, \Delta_0, \Theta_0)$, respectively.
Given the noisy data 
$(\widetilde x_1, \widetilde x_0, \widetilde u_0)$, 
we consider the following inequalities on $
\ell^2(\mathbb{N})$ for the noise
$(\delta_1,\delta_0,\theta_0)$:
\begin{equation}
	\label{eq:noise_cond1}
	\Delta_1 \Omega
	\Omega^* \Delta_1^* \leq 
	c_1^2  \widetilde \Xi_1 \Omega \Omega^*\widetilde \Xi_1^*
\end{equation}
and
\begin{equation}
	\label{eq:noise_cond2}
	\begin{bmatrix}
		\Delta_0 \\ 
		\Theta_0 
	\end{bmatrix}
	\Omega
	\Omega^* \begin{bmatrix}
		\Delta_0^* &
		\Theta_0^* 
	\end{bmatrix}
	\leq
	c_0^2  \begin{bmatrix}
		\widetilde \Xi_0 \\ 
		\widetilde \Upsilon_0
	\end{bmatrix}
	\Omega
	\Omega^* \begin{bmatrix}
		\widetilde \Xi_0^* &
		\widetilde \Upsilon_0^* 
	\end{bmatrix}
\end{equation}
for some constants $c_1,c_0 \geq 0$ and some
operator $\Omega \in \mathcal{L}(X,\ell^2(\mathbb{N}))$.
These conditions imply that the noise $(\delta_1,\delta_0,\theta_0)$ is relatively small compared to the 
measured data $(\widetilde x_1, \widetilde x_0, \widetilde u_0)$.
Such noise classes were also considered in 
the finite-dimensional setting \cite{Waarde2023SIAM,Bisoffi2024,Kaminaga2025}.
The reason for using inequalities of the above form
is technical and will be made clear in the proof of Theorem~\ref{thm:informative_stabilization_noise}.

Suppose that 
Assumption~\ref{assump:Bessel_noise} holds. We define the class $\mathcal{N}(c_1,c_0,\Omega)$
of noise by
\[
\mathcal{N}(c_1,c_0,\Omega)
\coloneqq 
\{
(\delta_1,\delta_0,\theta_0):
\text{Conditions~\eqref{eq:noise_cond1} and 
	\eqref{eq:noise_cond2} hold}
\}
\]
for constants
$c_1,c_0 \geq 0$ and 
operator $\Omega \in \mathcal{L}(X,\ell^2(\mathbb{N}))$.
Given $(\widetilde x_1, \widetilde x_0, \widetilde u_0)$, 
the set $\widetilde \Sigma_{\is}(c_1,c_0,\Omega)$ of
systems is defined by
\begin{align*}
	&\widetilde \Sigma_{\is}(c_1,c_0,\Omega) \coloneqq 
	\big\{
	(A,B) \in \mathcal{L}(X) \times \mathcal{L}(U,X):
	\text{There exist $(\delta_1,\delta_0,\theta_0) \in \mathcal{N}(c_1,c_0,\Omega)$} \\
	&\qquad \text{such that~} \widetilde x_1(k) - 
	\delta_1(k) = A\big(\widetilde x_0(k)
	- 
	\delta_0(k) \big)+ B
	\big(\widetilde u_0(k) -\theta_0(k)\big)~\text{for all $k \in \mathbb{N}$}
	\big\}
\end{align*}
for 
$c_1,c_0 \geq 0$ and 
$\Omega \in \mathcal{L}(X,\ell^2(\mathbb{N}))$.
By construction, $\widetilde \Sigma_{\is}(c_1,c_0,\Omega)$ is the set of all systems
compatible with the noise-free data
$(\widetilde x_1 - \delta_1,\widetilde x_0 - \delta_0,\widetilde u_0 - \theta_0)$.
If the noise that is actually
added to the data $(x_1,x_0,u_0)$ satisfying 
Assumption~\ref{assump:x1x0u0_relation}
belongs to $\mathcal{N}(c_1,c_0,\Omega)$, then 
$\widetilde \Sigma_{\is}(c_1,c_0,\Omega)$
contains the true system $(A_s,B_s)$.
As $c_1$ and $c_0$ increase, 
$\widetilde \Sigma_{\is}(c_1,c_0,\Omega)$ becomes larger.
It is worth mentioning that 
$\delta_1$ and $\theta_0$ can also be
regarded as external disturbances
to the system.

We examine the following 
notion of informativity for stabilization
in the presence of noise.
\begin{definition}
	Let $\gamma \in (0,1)$, 
	$c_1,c_0 \geq 0$, and 
	$\Omega \in \mathcal{L}(X,\ell^2(\mathbb{N}))$.
	The noisy data $(\widetilde x_1, \widetilde x_0, \widetilde u_0)$ are called
	{\em 
		informative for stabilization  with decay rate $\gamma$ under the noise class
		$\mathcal{N}(c_1,c_0,\Omega)$}
	if there exist an operator 
	$K \in \mathcal{L}(X,U)$ and a constant
	$M\geq 1$ 
	such that $	\widetilde \Sigma_{\is}(c_1,c_0,\Omega) 
	\subseteq \Sigma_{K,M,\gamma}$.	
\end{definition}

To discuss informativity for
stabilization in the presence of  noise,
we introduce the notion of frames for
Hilbert spaces. 
Concerning the general theory of
frames,
we refer to
\cite{Christensen2016}
and \cite[Chapter~5]{Mallat2008}.
\begin{definition}
	\label{def:frame}
	A sequence $(\eta_k)_{k \in \mathbb{N}}$ in a Hilbert space $Y$
	is called a {\em frame} for $Y$ if there exist constants 
	$\alpha, \beta >0$
	such that for all $y \in Y$,
	\[
	\beta \|y\|^2 \leq \sum_{k=1}^{\infty} | \langle 
	y,\eta_k\rangle  |^2 \leq \alpha \|y\|^2.
	\]
\end{definition}
The synthesis operator associated with a frame 
is surjective and hence has a right inverse in
$\mathcal{L}(Y,\ell^2(\mathbb{N}))$; see e.g.,
\cite[Lemma~5.1.5]{Christensen2016}.
Bessel sequences do not 
have these properties in general.

The following lemma, called Douglas' lemma~\cite[Theorem~1]{Douglas1966}, relates  
range inclusion, majorization, and
factorization for operators on Hilbert spaces, which is a key tool for
studying data informativity based on frames;
see also 
\cite[Proposition~12.1.2]{Tucsnak2009} and
\cite[Theorem~A.3.65]{Curtain2020}.
\begin{lemma}[Douglas' lemma]
	\label{lem:Douglas}
	Let $X$, $Y$, and $Z$ be Hilbert spaces. Let $A \in \mathcal{L}(Y,X)$ and 
	$B \in \mathcal{L}(Z,X)$.
	Then the following statements are equivalent:
	\begin{enumerate}[label=\upshape(\roman*), leftmargin=*, widest=iii]
		\item $\Ran A \subseteq \Ran B$.
		\item $AA^* \leq c^2 BB^*$ for some $c \geq 0$.
		\item There exists an operator $C \in \mathcal{L}(Y,Z)$ such that 
		$A=BC$.
	\end{enumerate}
	Moreover, 
	the operator $C \in \mathcal{L}(Y,Z)$ in statement~\textup{(iii)}
	satisfies $\|C\| \leq c$, where 
	the constant $c\geq 0$ is as in statement~\textup{(ii)}.
\end{lemma}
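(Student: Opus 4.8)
The plan is to prove the cycle of implications (iii)~$\Rightarrow$~(i), (iii)~$\Rightarrow$~(ii), (ii)~$\Rightarrow$~(iii), and (i)~$\Rightarrow$~(iii), extracting the norm bound $\|C\| \le c$ from the construction carried out in the step (ii)~$\Rightarrow$~(iii). The two implications starting from (iii) are immediate: if $A = BC$, then $\Ran A = \Ran(BC) \subseteq \Ran B$, and moreover $AA^* = B\,CC^*\,B^* \le \|C\|^2 BB^*$ because $CC^* \le \|C\|^2 I$ and conjugation by $B$ preserves the operator order; hence (ii) holds with $c = \|C\|$.

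For the implication (ii)~$\Rightarrow$~(iii), the key observation is that the inequality $AA^* \le c^2 BB^*$ should be read as a factorization statement about the adjoints. Assuming (ii), I would define a linear map $D_0$ on $\Ran B^* \subseteq Z$ by $D_0(B^*x) \coloneqq A^*x$ for $x \in X$. The estimate $\|A^*x\|^2 = \langle AA^*x, x\rangle \le c^2 \langle BB^*x, x\rangle = c^2\|B^*x\|^2$, valid for all $x \in X$, simultaneously shows that $D_0$ is well defined (if $B^*x_1 = B^*x_2$ then $A^*x_1 = A^*x_2$) and that $\|D_0 w\| \le c\|w\|$ for $w \in \Ran B^*$. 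Consequently $D_0$ extends by continuity to $\overline{\Ran B^*}$ with the same bound, and extending it by $0$ on $(\overline{\Ran B^*})^{\perp} = \Ker B$ yields $D \in \mathcal{L}(Z,Y)$ with $\|D\| \le c$ and $DB^* = A^*$. Setting $C \coloneqq D^* \in \mathcal{L}(Y,Z)$, we get $(BC)^* = C^*B^* = DB^* = A^*$, so $BC = A$, and $\|C\| = \|D\| \le c$. This establishes (iii) together with the ``moreover'' clause.

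For (i)~$\Rightarrow$~(iii), I would invoke the closed graph theorem. Since $\Ran A \subseteq \Ran B$, for each $y \in Y$ there is a unique $z \in (\Ker B)^{\perp}$ with $Bz = Ay$; define $Cy \coloneqq z$. Linearity of $C$ and the identity $BC = A$ are routine from uniqueness. To see $C$ is bounded, suppose $y_n \to y$ in $Y$ and $Cy_n \to z$ in $Z$; then $BCy_n = Ay_n \to Ay$ while also $BCy_n \to Bz$, so $Bz = Ay = B(Cy)$, and since both $z$ and $Cy$ lie in the closed subspace $(\Ker B)^{\perp}$ their difference lies in $\Ker B \cap (\Ker B)^{\perp} = \{0\}$, giving $z = Cy$. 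Hence the graph of $C$ is closed and $C \in \mathcal{L}(Y,Z)$.

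The main obstacle is the step (ii)~$\Rightarrow$~(iii): one must recognize that the operator inequality encodes a factorization of $A^*$ through $B^*$ rather than of $A$ through $B$ directly, and then carefully track well-definedness, the continuous extension to the closure, the zero-extension across $\Ker B$, and the final adjoint passage so that the sharp norm bound is retained. The remaining implications are standard functional-analytic boilerplate. (As this is Douglas' classical theorem, one may alternatively simply cite \cite{Douglas1966}; the argument above is the route I would reproduce.)
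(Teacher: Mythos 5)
Your proof is correct, including the norm bound $\|C\|\leq c$ extracted from the (ii)~$\Rightarrow$~(iii) construction, and the cycle of implications you prove does establish the full equivalence. The paper itself gives no proof of this lemma---it simply cites Douglas's original article \cite{Douglas1966} (and textbook references)---and your argument is a faithful, correctly executed reproduction of the classical proof from that source, so there is nothing to compare against beyond noting that a citation would have sufficed, as you yourself observe.
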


The following theorem gives a sufficient condition for 
the noisy data to be informative for stabilization.
\begin{theorem}
	\label{thm:informative_stabilization_noise}
	Suppose that  
	the noisy data $(\widetilde x_1,
	\widetilde x_0, \widetilde u_0)$ and the noise 
	$(\delta_1,\delta_0,\theta_0)$
	satisfy Assumption~\ref{assump:Bessel_noise}. Let 
	$(\widetilde\Xi_1,\widetilde\Xi_0,\widetilde\Upsilon_0)$ 
	be the synthesis operators 
	associated with $(\widetilde x_1,
	\widetilde x_0, \widetilde u_0)$.
	Assume that 
	$\widetilde x_0$
	is a frame for $X$ and that 
	there exist a right inverse $\widetilde \Xi_0^{\dagger} \in \mathcal{L}(X,
	\ell^2(\mathbb{N}))$ of $\widetilde \Xi_0$ and constants 
	$M\geq 1$ and $\gamma  > 0$ 
	such that 
	\begin{equation}
		\label{eq:til_Xi_power_stability}
		\|(\widetilde \Xi_1 \widetilde \Xi_0^{\dagger})^k\| \leq M \gamma^k
	\end{equation}
	for all $k \in \mathbb{N}_0$.
	Then 
	$K \coloneqq \widetilde \Upsilon_0
	\widetilde \Xi_0^{\dagger} \in \mathcal{L}(X,U)$ satisfies 
	$\widetilde \Sigma_{\is}(c_1,c_0,\widetilde \Xi_0^{\dagger}) 
	\subseteq \Sigma_{K,M,\widetilde \gamma}$
	for all $c_1 \geq 0$ and $0\leq c_0 < 1/M$, where
	\[
	\widetilde \gamma \coloneqq \frac{1+Mc_1}{1-Mc_0} \gamma >0.
	\]
	In particular,
	if $c_1,c_0\geq 0$ satisfy
	\begin{equation}
		\label{eq:c1c0_cond}
		\gamma c_1 + c_0 < \frac{1-\gamma}{M},
	\end{equation}
	then  $(\widetilde x_1, \widetilde x_0, \widetilde u_0)$ are 
	informative for stabilization  
	with decay rate $\widetilde \gamma \in (0,1)$ under the noise class
	$\mathcal{N}(c_1,c_0,\widetilde \Xi_0^{\dagger})$.

\end{theorem}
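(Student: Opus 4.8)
The plan is to collapse all of the data into a single operator identity for $A+BK$ and then run an equivalent-norm (renormalization) argument to control its powers. Throughout, write $F\coloneqq\widetilde\Xi_1\widetilde\Xi_0^{\dagger}$, so that $\|F^{k}\|\le M\gamma^{k}$ by \eqref{eq:til_Xi_power_stability}, and note that $\widetilde\Xi_0\widetilde\Xi_0^{\dagger}=I$ because $\widetilde\Xi_0^{\dagger}$ is a right inverse of $\widetilde\Xi_0$; in particular $K=\widetilde\Upsilon_0\widetilde\Xi_0^{\dagger}$ is obtained by composing the right inverse with $\widetilde\Upsilon_0$.

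First I would feed the two noise inequalities into Douglas' lemma (Lemma~\ref{lem:Douglas}) with the choice $\Omega=\widetilde\Xi_0^{\dagger}$. Inequality \eqref{eq:noise_cond1} reads $(\Delta_1\widetilde\Xi_0^{\dagger})(\Delta_1\widetilde\Xi_0^{\dagger})^{*}\le c_1^{2}(\widetilde\Xi_1\widetilde\Xi_0^{\dagger})(\widetilde\Xi_1\widetilde\Xi_0^{\dagger})^{*}$, so there is $C_1\in\mathcal{L}(X)$ with $\|C_1\|\le c_1$ and $\Delta_1\widetilde\Xi_0^{\dagger}=FC_1$. Inequality \eqref{eq:noise_cond2}, once $\widetilde\Xi_0\widetilde\Xi_0^{\dagger}=I$ and $\widetilde\Upsilon_0\widetilde\Xi_0^{\dagger}=K$ are substituted, takes the form $\begin{bmatrix}\Delta_0\widetilde\Xi_0^{\dagger}\\ \Theta_0\widetilde\Xi_0^{\dagger}\end{bmatrix}\begin{bmatrix}(\Delta_0\widetilde\Xi_0^{\dagger})^{*}&(\Theta_0\widetilde\Xi_0^{\dagger})^{*}\end{bmatrix}\le c_0^{2}\begin{bmatrix}I\\ K\end{bmatrix}\begin{bmatrix}I&K^{*}\end{bmatrix}$, so Douglas' lemma yields $C_0\in\mathcal{L}(X)$ with $\|C_0\|\le c_0$, $\Delta_0\widetilde\Xi_0^{\dagger}=C_0$, and $\Theta_0\widetilde\Xi_0^{\dagger}=KC_0$.

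Next, take any $(A,B)\in\widetilde\Sigma_{\is}(c_1,c_0,\widetilde\Xi_0^{\dagger})$ together with a witnessing noise $(\delta_1,\delta_0,\theta_0)\in\mathcal{N}(c_1,c_0,\widetilde\Xi_0^{\dagger})$. Applying Lemma~\ref{lem:Xi1_rep} to the noise-free data $(\widetilde x_1-\delta_1,\widetilde x_0-\delta_0,\widetilde u_0-\theta_0)$ gives $\widetilde\Xi_1-\Delta_1=A(\widetilde\Xi_0-\Delta_0)+B(\widetilde\Upsilon_0-\Theta_0)$. I would then multiply this identity on the right by $\widetilde\Xi_0^{\dagger}$ and insert the factorizations from the previous step: the left-hand side becomes $F(I-C_1)$, while the right-hand side becomes $A(I-C_0)+BK(I-C_0)=(A+BK)(I-C_0)$. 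Since $\|C_0\|\le c_0<1/M\le 1$, the operator $I-C_0$ is boundedly invertible, and therefore $A+BK=F(I-C_1)(I-C_0)^{-1}$.

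Finally, to turn this factorization into a power bound I would renormalize: set $\|x\|_{*}\coloneqq\sup_{k\ge 0}\gamma^{-k}\|F^{k}x\|$, which by $\|F^{k}\|\le M\gamma^{k}$ is an equivalent norm with $\|x\|\le\|x\|_{*}\le M\|x\|$ and $\|Fx\|_{*}\le\gamma\|x\|_{*}$; consequently $\|T\|_{*}\le M\|T\|$ for the induced operator norm of any $T\in\mathcal{L}(X)$. Hence $\|C_1\|_{*}\le Mc_1$ and $\|C_0\|_{*}\le Mc_0<1$, so $\|A+BK\|_{*}\le\|F\|_{*}\,\|I-C_1\|_{*}\,\|(I-C_0)^{-1}\|_{*}\le\gamma(1+Mc_1)/(1-Mc_0)=\widetilde\gamma$, whence $\|(A+BK)^{k}\|_{*}\le\widetilde\gamma^{k}$ and, converting back, $\|(A+BK)^{k}\|\le M\widetilde\gamma^{k}$ for all $k\in\mathbb{N}_0$; that is, $(A,B)\in\Sigma_{K,M,\widetilde\gamma}$, which gives the claimed inclusion. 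For the ``in particular'' statement, $\gamma c_1+c_0<(1-\gamma)/M$ forces $c_0<(1-\gamma)/M<1/M$ and is equivalent to $\gamma(1+Mc_1)<1-Mc_0$, i.e.\ $\widetilde\gamma\in(0,1)$, so the inclusion just proved is exactly informativity for stabilization with decay rate $\widetilde\gamma$ under $\mathcal{N}(c_1,c_0,\widetilde\Xi_0^{\dagger})$. The main obstacle is precisely the non-commutativity in $A+BK=F(I-C_1)(I-C_0)^{-1}$: one cannot split $\|(A+BK)^{k}\|$ into $\|F^{k}\|$ times a power of the remaining factor, and it is the passage to $\|\cdot\|_{*}$ — which inflates the norms of $C_1$ and $C_0$ by the factor $M$ — that both resolves this and explains why $Mc_1$ and $Mc_0$ (rather than $c_1$ and $c_0$) appear in $\widetilde\gamma$.
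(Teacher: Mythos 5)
Your proof is correct and follows essentially the same route as the paper's: Douglas' lemma applied to the two noise inequalities with $\Omega=\widetilde\Xi_0^{\dagger}$, the resulting operator identity $(A+BK)(I-C_0)=\widetilde\Xi_1\widetilde\Xi_0^{\dagger}(I-C_1)$ (the paper's $\Phi_1,\Phi_0$ are your $-C_1,-C_0$), and the same equivalent norm $\sup_{k}\gamma^{-k}\|(\widetilde\Xi_1\widetilde\Xi_0^{\dagger})^{k}x\|$ to obtain the contraction bound $\widetilde\gamma$. The only cosmetic difference is that you multiply the compatibility identity by $\widetilde\Xi_0^{\dagger}$ directly rather than first isolating $A\widetilde\Xi_0+B\widetilde\Upsilon_0$.
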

\begin{proof}
	Let
	$c_1 \geq 0$ and $0\leq c_0 < 1/M$. 
	If $\widetilde\Sigma_{\is}(c_1,c_0,\widetilde \Xi_0^{\dagger}) =\emptyset$, then the result is trivial, so we assume that
	$\widetilde\Sigma_{\is}(c_1,c_0,\widetilde \Xi_0^{\dagger}) \neq \emptyset$.
	Take $(A,B) \in \widetilde \Sigma_{\is}(c_1,c_0,\widetilde \Xi_0^{\dagger}) $ arbitrarily.
	Since $\widetilde \Xi_0\widetilde \Xi_0^{\dagger} =I$, we have
	\begin{equation}
		\label{eq:ABK_rep_noise}
		A+BK = (A\widetilde \Xi_0 + B\widetilde \Upsilon_0) \widetilde \Xi_0^{\dagger}.
	\end{equation}
	By the definition of $\widetilde \Sigma_{\is}(c_1,c_0,\widetilde \Xi_0^{\dagger})$ and 
	Lemma~\ref{lem:Xi1_rep}, 
	there exist $(\delta_1,\delta_0,\theta_0) \in \mathcal{N}(c_1,c_0,\widetilde \Xi_0^{\dagger})$ such that
	the synthesis operators $(\Delta_1, \Delta_0, \Theta_0)$
	of $(\delta_1,\delta_0,\theta_0)$ satisfy
	\begin{equation}
		\label{eq:AxiBUp_rep}
		A\widetilde \Xi_0 + B\widetilde \Upsilon_0 = 
		(\widetilde \Xi_1 - \Delta_1) - 
		(A\Delta_0+B \Theta_0).
	\end{equation}
	By
	Lemma~\ref{lem:Douglas} (Douglas' lemma) and
	\eqref{eq:noise_cond1},
	there exists $\Phi_1 \in \mathcal{L}(X)$
	with $\|\Phi_1\| \leq c_1$
	such that 
	\[
	\widetilde \Xi_1 \widetilde \Xi_0^{\dagger}\Phi_1
	=
	\Delta_1 \widetilde \Xi_0^{\dagger}.
	\] 
	Using \eqref{eq:noise_cond2}, we also have 
	\[
	\begin{bmatrix}
		\widetilde \Xi_0 \\ \widetilde \Upsilon_0 
	\end{bmatrix} \widetilde \Xi_0^{\dagger}
	\Phi_0 =
	\begin{bmatrix}
		\Delta_0 \\ \Theta_0 
	\end{bmatrix} \widetilde \Xi_0^{\dagger}
	\]
	for some $\Phi_0 \in \mathcal{L}(X)$
	satisfying  $\|\Phi_0\| \leq c_0$.
	Hence,
	\[
	\big(
	(\widetilde \Xi_1 - \Delta_1) - 
	(A\Delta_0+B \Theta_0)\big) \widetilde \Xi_0^{\dagger} = 
	\widetilde \Xi_1\widetilde \Xi_0^{\dagger}(I+\Phi_1) 
	+
	(A\widetilde \Xi_0 + B\widetilde \Upsilon_0) \widetilde \Xi_0^{\dagger}
	\Phi_0.
	\]
	Combining this with \eqref{eq:ABK_rep_noise} and 
	\eqref{eq:AxiBUp_rep},
	we derive
	\begin{equation}
		\label{eq:ABK_Xi1Xi0}
		(A+BK) (I - \Phi_0) =  
		\widetilde \Xi_1\widetilde \Xi_0^{\dagger} (I+\Phi_1).
	\end{equation}
	
	Define $\vertiii{\cdot} \colon X \to \mathbb{R}$ by
	\[
	\vertiii{x} \coloneqq 
	\sup_{k \in \mathbb{N}_0}
	\|\gamma^{-k}
	(\widetilde \Xi_1\widetilde \Xi_0^{\dagger})^{k}x\|,
	\quad 
	x \in X.
	\]
	Then $\vertiii{\cdot}$ is a norm on $X$.
	The same symbol $\vertiii{\cdot}$
	will be used for 
	the induced operator norm.
	By \eqref{eq:til_Xi_power_stability},
	\begin{align}
		\label{eq:vertiii_prop1}
		\|x\| \leq \vertiii{x} \leq M \|x\|
	\end{align}
	for all $x \in X$,
	and
	\begin{equation}
		\label{eq:vertiii_prop2}
		\vertiii{\widetilde \Xi_1\widetilde \Xi_0^{\dagger} }
		\leq \gamma.
	\end{equation}
	Since \eqref{eq:vertiii_prop1} yields 
	$\vertiii{\Phi_1}  \leq 
	Mc_1$,
	we obtain
	\begin{equation}
		\label{eq:I-Phi1}
		\vertiii{I + \Phi_1} \leq 1 + Mc_1.
	\end{equation}
	Similarly, $\vertiii{\Phi_0} \leq Mc_0 < 1$.
	Hence, $I - \Phi_0$ is invertible in 
	$\mathcal{L}(X)$, and
	\begin{equation}
		\label{eq:I-Phi0_inv}
		\vertiii{(I - \Phi_0)^{-1}} \leq \frac{1}{1-Mc_0}.
	\end{equation}
	Combining \eqref{eq:ABK_Xi1Xi0}--\eqref{eq:I-Phi0_inv},
	we derive
	\[
	\vertiii{A+BK}
	\leq 
	\vertiii{\widetilde \Xi_1\widetilde \Xi_0^{\dagger}} \,
	\vertiii{I+\Phi_1} \,
	\vertiii{(I-\Phi_0)^{-1}}
	\leq \frac{1+Mc_1}{1-Mc_0} \gamma = \widetilde \gamma.
	\]
	This and \eqref{eq:vertiii_prop1} give
	\[
	\|(A+BK)^k\| \leq M\vertiii{(A+BK)^k} \leq M \widetilde \gamma^k
	\]
	for all $k \in \mathbb{N}_0$, and therefore
	$(A,B) \in \Sigma_{K,M,\widetilde \gamma}$.
	This proves the first assertion.
	Since the condition~\eqref{eq:c1c0_cond}
	is equivalent to
	$\widetilde \gamma < 1$,
	the second assertion is also proved.
\end{proof}

\section{Frame-based sufficient condition and its equivalent conditions}
\label{sec:frame_based}
In this section, we return to the noise-free case and investigate the 
frame-based sufficient condition obtained in Theorem~\ref{thm:informative_stabilization_noise}.
We present its equivalent conditions in terms of
informativity for stabilization and operator inequalities.

The following theorem shows that 
the frame-based  condition (statement (ii))
is equivalent to the existence of a stabilizing gain that 
satisfies a certain operator inequality.
\begin{theorem}
	\label{thm:equiv_frame1}
	Suppose that  
	the data $(x_1,x_0,u_0)$
	satisfy Assumptions~\ref{assump:x1x0u0_relation} and 
	\ref{assump:Bessel}. Let
	$(\Xi_1,\Xi_0,\Upsilon_0)$ be the synthesis operators 
	associated with $(x_1,x_0,u_0)$.
	Then the following statements are equivalent
	for a fixed $\gamma \in (0,1)$:
	\begin{enumerate}[label=\upshape(\roman*), leftmargin=*, widest=ii]
		\item 
		The data $(x_1,x_0,u_0)$ are
		informative for stabilization with decay rate $\gamma$, 
		and there exist 
		a constant $c \geq 0$ and 
		a stabilizing gain $K \in \mathcal{L}(X,U)$ 
		with decay rate $\gamma$ for $(x_1,x_0,u_0)$
		such that 
		the following inequality on  $
		\ell^2(\mathbb{N})$ holds:
		\begin{equation}
			\label{eq:operator_ineq}
			(\Xi_0 +  K^*\Upsilon_0)^*(\Xi_0 +  K^*\Upsilon_0) 
			\leq c^2 
			(\Xi_0^* \Xi_0  +   \Upsilon_0^* \Upsilon_0)^2.
		\end{equation}
		
		\item The sequence $x_0$
		is a frame for $X$, and there exist a right inverse $\Xi_0^{\dagger} \in \mathcal{L}(X,
		\ell^2(\mathbb{N}))$ of $\Xi_0$ such that 
		$\Xi_1  \Xi_0^{\dagger}$
		is power stable with decay rate $\gamma$.
	\end{enumerate}
\end{theorem}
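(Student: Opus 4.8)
The plan is to recast the operator inequality~\eqref{eq:operator_ineq} as a range inclusion via Douglas' lemma and then to play it off against the range conditions governing informativity. Keep the notation $G = \begin{bmatrix} I \\ K \end{bmatrix} \in \mathcal{L}(X,X\times U)$ and $H = \begin{bmatrix} \Xi_0 \\ \Upsilon_0 \end{bmatrix} \in \mathcal{L}(\ell^2(\mathbb{N}),X\times U)$ from~\eqref{eq:GH_def}. A direct adjoint computation gives $G^* = \begin{bmatrix} I & K^* \end{bmatrix}$, hence $G^*H = \Xi_0 + K^*\Upsilon_0$ and $H^*H = \Xi_0^*\Xi_0 + \Upsilon_0^*\Upsilon_0$, so that~\eqref{eq:operator_ineq} reads $(G^*H)^*(G^*H) \leq c^2 (H^*H)^2$. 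Since $H^*H$ is self-adjoint, Lemma~\ref{lem:Douglas} applied with $A \coloneqq H^*G$ and $B \coloneqq H^*H$ shows that~\eqref{eq:operator_ineq} holds for some $c \geq 0$ if and only if $\Ran(H^*G) \subseteq \Ran(H^*H)$, that is, $H^*(\Ran G) \subseteq H^*(\Ran H)$.

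For the implication~(ii)~$\Rightarrow$~(i), I would set $K \coloneqq \Upsilon_0 \Xi_0^{\dagger} \in \mathcal{L}(X,U)$. From $\Xi_0 \Xi_0^{\dagger} = I$ and Lemma~\ref{lem:Xi1_rep}, every $(A,B) \in \Sigma_{\is}$ satisfies $A + BK = (A\Xi_0 + B\Upsilon_0)\Xi_0^{\dagger} = \Xi_1 \Xi_0^{\dagger}$, so power stability of $\Xi_1 \Xi_0^{\dagger}$ yields $\Sigma_{\is} \subseteq \Sigma_{K,M,\gamma}$ for a suitable $M \geq 1$; hence the data are informative for stabilization with decay rate $\gamma$ and $K$ is a stabilizing gain. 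For the operator inequality, observe that $G = H\Xi_0^{\dagger}$, whence $G^*H = (\Xi_0^{\dagger})^* H^*H$ and therefore
\[
(G^*H)^*(G^*H) = (H^*H)\,\Xi_0^{\dagger}(\Xi_0^{\dagger})^*\,(H^*H) \leq \|\Xi_0^{\dagger}\|^2 (H^*H)^2 ,
\]
which is~\eqref{eq:operator_ineq} with $c = \|\Xi_0^{\dagger}\|$.

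For the converse~(i)~$\Rightarrow$~(ii), informativity for stabilization together with Lemma~\ref{lem:ker_lemma} gives $\Ran G \subseteq \overline{\Ran H}$, while the first paragraph turns~\eqref{eq:operator_ineq} into $H^*(\Ran G) \subseteq H^*(\Ran H)$. The decisive step is to upgrade these to the exact inclusion $\Ran G \subseteq \Ran H$: given $g \in \Ran G$, the inclusion $H^*(\Ran G) \subseteq H^*(\Ran H)$ provides $h \in \Ran H$ with $H^*g = H^*h$; then $g - h \in \Ker H^* = (\overline{\Ran H})^{\perp}$, while $g - h \in \overline{\Ran H}$ because $g, h \in \overline{\Ran H}$, so $g = h \in \Ran H$. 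Reading off the first component of $\Ran G \subseteq \Ran H$ shows $\Ran \Xi_0 = X$, so $\Xi_0$ is surjective and, being the synthesis operator of a Bessel sequence, $x_0$ is a frame for $X$; moreover Lemma~\ref{lem:Douglas} supplies $\Xi_0^{\dagger} \in \mathcal{L}(X,\ell^2(\mathbb{N}))$ with $H\Xi_0^{\dagger} = G$, i.e., $\Xi_0\Xi_0^{\dagger} = I$ and $\Upsilon_0 \Xi_0^{\dagger} = K$. Finally, Lemma~\ref{lem:Xi1_rep} gives $\Xi_1 \Xi_0^{\dagger} = A + BK$ for every $(A,B) \in \Sigma_{\is}$, so $\Xi_1 \Xi_0^{\dagger}$ inherits power stability with decay rate $\gamma$ from the fact that $K$ is a stabilizing gain with that rate, which establishes~(ii).

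I expect the main obstacle to be exactly the range-cancellation step in the last paragraph. Although $\Ran G$ is closed (it is the graph of the bounded operator $K$), $\Ran H$ need not be closed, so the inclusion $\Ran G \subseteq \Ran H$ is not available directly; it is only the conjunction of the approximate inclusion $\Ran G \subseteq \overline{\Ran H}$ coming from Lemma~\ref{lem:ker_lemma} with the pushed-forward inclusion $H^*(\Ran G) \subseteq H^*(\Ran H)$ coming from~\eqref{eq:operator_ineq}, combined with the injectivity of $H^*$ on $\overline{\Ran H}$, that forces the exact inclusion. Recognizing that the two hypotheses of statement~(i) are precisely the two ingredients needed for this cancellation---and that the operator inequality is the correct supplement to informativity to recover the frame-based condition---is the conceptual heart of the proof.
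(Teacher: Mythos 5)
Your proof is correct and follows essentially the same route as the paper: both directions rest on Douglas' lemma together with Lemma~\ref{lem:ker_lemma}, use the same gain $K=\Upsilon_0\Xi_0^{\dagger}$ for (ii)~$\Rightarrow$~(i), and recover the same factorization $G=H\Xi_0^{\dagger}$ for the converse. The only difference is cosmetic, in (i)~$\Rightarrow$~(ii): the paper extends the majorization $\|G^*\psi\|\le c\|H^*\psi\|$ from $\overline{\Ran H}$ to all of $X\times U$ via $\Ker H^*\subseteq\Ker G^*$ and invokes Douglas once, whereas you first convert \eqref{eq:operator_ineq} into the range inclusion $H^*(\Ran G)\subseteq H^*(\Ran H)$ and cancel it against $\Ran G\subseteq\overline{\Ran H}$ to get $\Ran G\subseteq\Ran H$ before applying Douglas again---two equivalent faces of the same lemma.
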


\begin{proof}
	Let $G  \in \mathcal{L}(X, X \times U) $ and 
	$H \in \mathcal{L}(\ell^2(\mathbb{N}), X \times U) $
	be defined as in \eqref{eq:GH_def}.
	First, we prove the implication (ii) $\Rightarrow $ (i).
	Define $K \coloneqq \Upsilon_0 \Xi_0^{\dagger} 
	\in \mathcal{L}(X,U)$.
	By Lemma~\ref{lem:Xi1_rep},
	we obtain $\Xi_1 \Xi_0^{\dagger} = 
	A + BK$ for all
	$(A,B) \in \Sigma_{\is}$.
	Therefore,
	$(x_1,x_0,u_0)$ are informative for stabilization with decay rate $\gamma$.
	Since
	$
	G =
	H \Xi_0^{\dagger}$,
	Lemma~\ref{lem:Douglas} (Douglas' lemma) shows that 
	there exists  $c\geq 0$ such that
	\begin{equation}
		\label{eq:im_cond_IK}
		\left\| 
		G^*
		\psi
		\right\|
		\leq c
		\left\| 
		H^*
		\psi 
		\right\|	\quad \text{for all~$\psi \in X \times U$}.
	\end{equation}
		Let
		$w \in \ell^2(\mathbb{N})$ and
		$\psi  = Hw$.
		Then
		we obtain
		$
		\left\|
		G^*H w
		\right\|
		\leq c
		\left\|
		H^*H w
		\right\|,
		$
		which is equivalent to
		the inequality \eqref{eq:operator_ineq}.
		
		Next, we prove the implication (i) $\Rightarrow $ (ii).
		To show that there exists $\Xi  \in \mathcal{L}(X,\ell^2(\mathbb{N}))$
		such that 
		$
		G =
		H \Xi$,
		it suffices by Lemma~\ref{lem:Douglas} (Douglas' lemma) to prove that
		there exists $c \geq 0$ such that 
		the inequality \eqref{eq:im_cond_IK} holds.
		By the assumption~\eqref{eq:operator_ineq},
		we obtain
		$
		\left\| 
		G^*
		\psi
		\right\|
		\leq c
		\left\| 
		H^*
		\psi
		\right\|$
		for all~$\psi \in \Ran H$.
		Since 
		$
		\left(\Ker 
		H^*\right)^{\perp} = \overline{\Ran H},
		$
		it follows that
		\begin{equation}
			\label{eq:ker_perp_ineq}
			\left\| 
			G^*
			\psi
			\right\|
			\leq c
			\left\| 
			H^*
			\psi
			\right\|	\qquad \text{for all~$\psi \in \left(\Ker 
				H^* \right)^{\perp}$}.
		\end{equation}
		As seen in Lemma~\ref{lem:ker_lemma}, we also have
		\begin{equation}
			\label{eq:ker_XiUp}
			\Ker 
			H^*
			\subseteq 
			\Ker 
			G^*.
		\end{equation}
		Combining 
		\eqref{eq:ker_perp_ineq} and 
		\eqref{eq:ker_XiUp},
		we derive
		\eqref{eq:im_cond_IK}.
		
		Since 
		$
		G =
		H \Xi$, we obtain 
		$\Xi_0 \Xi = I$.
		Therefore, $x_0$ is a frame for $X$ by
		\cite[Theorem~5.5.1]{Christensen2016}, and
		$\Xi$ is a right inverse of $\Xi_0$.
		It also follows
		from $
		G =
		H \Xi$ that
		$K = \Upsilon_0\Xi$, which together with
		Lemma~\ref{lem:Xi1_rep} yields
		$
		\Xi_1 \Xi=
		A + BK
		$
		for all $(A,B) \in \Sigma_{\is}$.
		Since $(x_1,x_0,u_0)$
		are informative for stabilization with decay rate $\gamma$,
		power stability of $\Xi_1 \Xi$ with
		decay rate $\gamma$ is obtained.
	\end{proof}
	
	The inequality \eqref{eq:operator_ineq} 
	in Theorem~\ref{thm:equiv_frame1}
	includes a stabilizing gain $K$ and hence
	cannot be checked directly by using the data.
	When $\dim U < \infty$, we can replace it
	with an inequality involving only the synthesis operator $\Xi_0$.
	\begin{theorem}
		\label{thm:equiv_frame2}
		Suppose that  
		the data $(x_1,x_0,u_0)$
		satisfy Assumptions~\ref{assump:x1x0u0_relation} and 
		\ref{assump:Bessel}. Let
		$(\Xi_1,\Xi_0,\Upsilon_0)$ be the synthesis operators 
		associated with $(x_1,x_0,u_0)$.
		If $\dim U < \infty$,
		then the following statements are equivalent
		for a fixed $\gamma \in (0,1)$:
		\begin{enumerate}[label=\upshape(\roman*), leftmargin=*, widest=ii]
			\item 
			The data $(x_1,x_0,u_0)$ are
			informative for stabilization with decay rate $\gamma$, and 
			there exists a constant $c\geq 0$ such that 
			the following inequality on  $
			\ell^2(\mathbb{N})$ holds:
			\begin{equation}
				\label{eq:operator_ineq_finite_U}
				\Xi_0 ^* \Xi_0
				\leq c^2 
				(\Xi_0^* \Xi_0)^2.
			\end{equation}
			
			\item The sequence $x_0$
			is a frame for $X$, and there exists a right inverse $\Xi_0^{\dagger} \in \mathcal{L}(X,
			\ell^2(\mathbb{N}))$ of $\Xi_0$ such that 
			$\Xi_1 \Xi_0^{\dagger}$ is power stable
			with decay rate $\gamma$.
		\end{enumerate}
	\end{theorem}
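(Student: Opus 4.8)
The plan is to deduce the result from Theorem~\ref{thm:equiv_frame1}. Statement~(ii) above is (verbatim) statement~(ii) of Theorem~\ref{thm:equiv_frame1}, so by that theorem it suffices to prove that, under the standing assumptions together with $\dim U < \infty$, statement~(i) above is equivalent to statement~(i) of Theorem~\ref{thm:equiv_frame1}. Both of those contain the clause ``informative for stabilization with decay rate $\gamma$'', so the whole issue is to pass between the two operator inequalities \eqref{eq:operator_ineq_finite_U} and \eqref{eq:operator_ineq} once informativity for stabilization is available. Throughout I would write $G$ and $H$ as in \eqref{eq:GH_def} for a stabilizing gain $K$, and use Douglas' lemma (Lemma~\ref{lem:Douglas}) to translate operator inequalities into range inclusions.

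For the implication ``statement~(ii) $\Rightarrow$ statement~(i)'', I would first apply Theorem~\ref{thm:equiv_frame1} to obtain statement~(i) of that theorem; in particular the data are informative for stabilization with decay rate $\gamma$. It then remains to produce \eqref{eq:operator_ineq_finite_U}. Since $x_0$ is a frame, $\Xi_0$ is surjective, hence $\Xi_0^*$ is bounded below, so $\Xi_0\Xi_0^* \geq \delta^2 I$ for some $\delta > 0$, and then for every $w \in \ell^2(\mathbb{N})$
\[
\langle (\Xi_0^*\Xi_0)^2 w, w\rangle = \langle \Xi_0\Xi_0^*(\Xi_0 w),\Xi_0 w\rangle \geq \delta^2 \|\Xi_0 w\|^2 = \delta^2 \langle \Xi_0^*\Xi_0 w, w\rangle ,
\]
which gives \eqref{eq:operator_ineq_finite_U} with $c = 1/\delta$. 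This direction does not use $\dim U < \infty$.

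For the implication ``statement~(i) $\Rightarrow$ statement~(ii)'', I would first observe that \eqref{eq:operator_ineq_finite_U} forces $\Ran \Xi_0$ to be closed: writing $S \coloneqq \Xi_0^*\Xi_0 \geq 0$ and applying the continuous functional calculus to $f(t) \coloneqq t - c^2 t^2$, the inequality $f(S) \leq 0$ yields $\sigma(S) \subseteq \{0\} \cup [c^{-2},\infty)$, so $0$ is not an accumulation point of $\sigma(S)$, whence $\Ran S$ --- and therefore $\Ran \Xi_0$ --- is closed. (If $c = 0$, then $\Xi_0 = 0$, which is incompatible with the nontriviality of $X$ below.) Next, informativity for stabilization together with Lemma~\ref{lem:ker_lemma} gives $\overline{\Ran \Xi_0} = X$, exactly as in the proof of Theorem~\ref{thm:informative_stabilization}; combined with the previous step this yields $\Ran \Xi_0 = X$, so $x_0$ is a frame and $\Xi_0$ admits a right inverse $\Xi_0^{\dagger} \in \mathcal{L}(X,\ell^2(\mathbb{N}))$. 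Now set $L \coloneqq \Upsilon_0 \Xi_0^{\dagger} \in \mathcal{L}(X,U)$ and $V \coloneqq \Upsilon_0(\Ker \Xi_0) \subseteq U$. Since the general solution of $\Xi_0 w = x$ is $w = \Xi_0^{\dagger} x + k$ with $k \in \Ker \Xi_0$, one has $\Ran H = \{(x,u) \in X \times U : u - Lx \in V\}$, the preimage of $V$ under a continuous map; as $\dim U < \infty$ makes $V$ closed, $\Ran H$ is closed, hence $\Ran(H^*H) = \Ran H^* = (\Ker H)^{\perp}$. Finally, for every $x \in X$ and every $w \in \Ker H$ one computes $\langle (\Xi_0^* + \Upsilon_0^* K)x, w\rangle = \langle x, \Xi_0 w\rangle + \langle Kx, \Upsilon_0 w\rangle = 0$, so $\Ran(\Xi_0 + K^*\Upsilon_0)^* \subseteq (\Ker H)^{\perp} = \Ran(\Xi_0^*\Xi_0 + \Upsilon_0^*\Upsilon_0)$, and Lemma~\ref{lem:Douglas} then delivers \eqref{eq:operator_ineq} with the same stabilizing gain $K$. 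Thus statement~(i) of Theorem~\ref{thm:equiv_frame1} holds, and that theorem yields statement~(ii).

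The main obstacle is the closedness of $\Ran H$, and more precisely of $\Ran(H^*H)$, which is what \eqref{eq:operator_ineq} genuinely needs --- in the earlier results only density of ranges was available, and that sufficed. Securing it requires \emph{both} that \eqref{eq:operator_ineq_finite_U} upgrades the density $\overline{\Ran \Xi_0} = X$ (coming from informativity) to the equality $\Ran \Xi_0 = X$, \emph{and} that $\Upsilon_0(\Ker \Xi_0)$ is closed; it is precisely at the latter point that the hypothesis $\dim U < \infty$ is indispensable, since without it this subspace, and hence $\Ran H$, can fail to be closed. Once closedness of $\Ran H$ is in hand, the remaining steps are routine applications of Douglas' lemma and Theorem~\ref{thm:equiv_frame1}.
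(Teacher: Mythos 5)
Your proof is correct, and the implication (ii) $\Rightarrow$ (i) is essentially the paper's argument (the frame lower bound substituted into the quadratic form). In the direction (i) $\Rightarrow$ (ii), however, you take a genuinely different route at two points. First, for the closedness of $\Ran H$ --- which both proofs identify as the crux and the place where $\dim U < \infty$ enters --- the paper argues via the closed range theorem applied twice: it reduces to closedness of $\Ran H^* = \Ran \Xi_0^* + \Ran \Upsilon_0^*$ and runs a sequence argument with the orthogonal projection onto $(\Ran \Xi_0^*)^{\perp}$, using that the projected image of $\Ran \Upsilon_0^*$ is finite-dimensional. You instead first upgrade $\overline{\Ran \Xi_0}=X$ (from informativity and Lemma~\ref{lem:ker_lemma}) to surjectivity of $\Xi_0$, and then exhibit $\Ran H$ explicitly as the preimage $\{(x,u): u - \Upsilon_0\Xi_0^{\dagger}x \in \Upsilon_0(\Ker \Xi_0)\}$ of the finite-dimensional, hence closed, subspace $\Upsilon_0(\Ker\Xi_0)$ under a continuous map; this is shorter and makes the role of $\dim U<\infty$ more transparent, at the cost of needing surjectivity of $\Xi_0$ up front (the paper's version only needs $\Ran\Xi_0$ closed). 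Second, to conclude, the paper goes directly from $\Ran G \subseteq \Ran H$ via Douglas' lemma to a right inverse $\Xi_0^{\dagger}$ with $G = H\Xi_0^{\dagger}$ and repeats the last paragraph of the proof of Theorem~\ref{thm:equiv_frame1}, whereas you verify the inequality \eqref{eq:operator_ineq} for the stabilizing gain (using $(\Ker H)^{\perp} = \Ran(H^*H)$, valid once $\Ran H$ is closed) and then invoke Theorem~\ref{thm:equiv_frame1} as a black box. Your detour is slightly longer but cleanly modular; your spectral-calculus argument for closedness of $\Ran\Xi_0$ (in place of the paper's Douglas-lemma argument) is likewise a valid, equivalent alternative.
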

	
	\begin{proof}
		We denote by $X_0$ the closed linear span of 
		$\{ x_0(k):k \in \mathbb{N} \}$. 
		First, we prove the implication (ii) $\Rightarrow $ (i).
		Informativity for stabilization is obtained
		as in the proof of Theorem~\ref{thm:equiv_frame1}.
		Moreover, since $x_0$
		is a frame for $X$, there exists $\beta > 0$ such that
		\begin{equation}
			\label{eq:Xi0*x_lower_bound}
			\beta \|x\| \leq \|\Xi_0^*x\|
		\end{equation}
		for all $x \in X$. Let
		$w \in \ell^2(\mathbb{N})$ and
		substitute 
		$x =\Xi_0w$ into \eqref{eq:Xi0*x_lower_bound}. Then we obtain
		the inequality \eqref{eq:operator_ineq_finite_U}
		with $c = 1/\beta$ holds.
		
		Next, we prove the implication (i) $\Rightarrow $ (ii).
		By \eqref{eq:operator_ineq_finite_U}, we have
		\[
		\|\Xi_0 w\| \leq c \|\Xi_0^*\Xi_0 w\|
		\]
		for all $w \in \ell^2(\mathbb{N})$.
		Since $\overline{\Ran \Xi_0} = X_0$,
		this inequality yields
		$
		\|x\| \leq c\|\Xi_0^*x\|
		$
		for all $x \in X_0$. Lemma~\ref{lem:Douglas} (Douglas' lemma) shows that 
		$X_0 \subseteq \Ran \Xi_0$.
		Then $\Ran \Xi_0 = X_0$, and hence
		$\Ran \Xi_0$ is closed.

		Let $G  \in \mathcal{L}(X, X \times U) $ and 
		$H \in \mathcal{L}(\ell^2(\mathbb{N}), X \times U) $
		be defined as in \eqref{eq:GH_def}.
		By Lemma~\ref{lem:ker_lemma},
		we obtain $\Ran 
		G
		\subseteq 
		\overline{\Ran 
			H}$.
		We will prove that if $\dim U < \infty$, then
		$
		\Ran 
		G
		\subseteq 
		\Ran 
		H.
		$
		It is enough to show that 
		$
		\Ran H
		$
		is closed.
		By the closed range theorem
		(see, e.g., \cite[p.~205]{Yosida1980}),
		this property holds 
		if and only if $
		\Ran H^*
		$
		is closed.
		To prove that the adjoint
		$H^* = \begin{bmatrix}
			\Xi_0^* & \Upsilon_0^*
		\end{bmatrix}$
		has closed range,
		we use the closed range theorem again and then
		obtain that
		$\Ran \Xi_0^*$ is closed.
		This yields
		\[
		\left((\Ran \Xi_0^*)^{\perp} \right)^{\perp} = \Ran \Xi_0^*.
		\]		
		Let the sequence $(w_n)_{n \in \mathbb{N}}$ in
		$\Ran H^*$ converge to some $w \in \ell^2(\mathbb{N})$.
		For all $n \in \mathbb{N}$,
		there exist
		$w_{1,n} \in \Ran \Xi_0^*$
		and $w_{2,n} \in \Ran \Upsilon_0^*$
		such that $w_n = w_{1,n} + w_{2,n}$.
		Let $\Pi \in \mathcal{L}(\ell^2(\mathbb{N}))$
		be the orthogonal projection onto $(\Ran \Xi_0^*)^{\perp}$.
		Then
		\[
		\lim_{n \to \infty} \Pi w_{2,n} = 
		\lim_{n \to \infty} \Pi w_n
		= \Pi w.
		\]
		Since 
		$\dim U < \infty$ yields
		$\dim 
		\Pi (\Ran \Upsilon_0^*) < \infty$,
		it follows that 
		$\Pi (\Ran \Upsilon_0^*)$ is closed. 
		Hence, $\Pi w \in \Pi (\Ran \Upsilon_0^*)$. This 
		implies that
		$\Pi(w -w_2) = 0$ for some $w_2 \in \Ran \Upsilon_0^*$.
		Since there exists $w_1 \in \Ran \Xi_0^*$
		such that $w  = w_1+w_2$, we conclude that
		$
		\Ran H^*
		$
		is closed.

		By 		$
		\Ran 
		G
		\subseteq 
		\Ran 
		H
		$ and 
		Lemma~\ref{lem:Douglas} (Douglas' lemma),
		there exists $\Xi_0^{\dagger} \in \mathcal{L}(X,\ell^2(\mathbb{N}))$
		such that $G = H \Xi_0^{\dagger}$.
		Statement~(ii) follows as in the last paragraph
		of the proof of Theorem~\ref{thm:equiv_frame1}.
	\end{proof}
	
	\begin{remark}[Inequality~\eqref{eq:operator_ineq_finite_U}
		and frame for $X_0$]
		If \eqref{eq:operator_ineq_finite_U} holds,
		then $\Ran \Xi_0$ is closed, as shown in the proof of 
		the implication (i) $\Rightarrow $ (ii) of
		Theorem~\ref{thm:equiv_frame2}.
		The converse implication follows immediately
		from Lemma~\ref{lem:Douglas} (Douglas' lemma).
		Therefore,
		\cite[Corollary~5.5.2]{Christensen2016} shows that \eqref{eq:operator_ineq_finite_U} holds
		if and only if $x_0$ is a frame for $X_0$.
		\hspace*{\fill} $\triangle$ 
	\end{remark}
	
	The following result also characterizes the frame-based condition (statement~(i)), which is an extension of 
	the finite-dimensional case
	\cite[Theorem~17]{Waarde2020}.
	\begin{proposition}
		\label{prop:equiv_frame3}
		Suppose that  
		the data $(x_1,x_0,u_0)$
		satisfy Assumption~\ref{assump:Bessel}, and let 
		$(\Xi_1,\Xi_0,\Upsilon_0)$ be the synthesis operators 
		associated with $(x_1,x_0,u_0)$.
		Then the following statements are equivalent
		for a fixed $\gamma \in (0,1)$:
		\begin{enumerate}[label=\upshape(\roman*), leftmargin=*, widest=ii]
			\item 
			The sequence $x_0$
			is a frame for $X$, and there exists 
			a right inverse $\Xi_0^{\dagger} \in \mathcal{L}(X,
			\ell^2(\mathbb{N}))$ of $\Xi_0$ such that 
			$\Xi_1 \Xi_0^{\dagger}$
			is power stable with some decay rate 
			in $(0,\gamma)$.
			
			\item There exists an operator $\Lambda \in \mathcal{L}(X,\ell^2(\mathbb{N}))$
			such that $\Xi_0 \Lambda $ is self-adjoint and 
			the following inequality on $X \times X$ holds:
			\[
			\begin{bmatrix}
				\gamma^2 \Xi_0 \Lambda - I & \Xi_1\Lambda \\
				(\Xi_1\Lambda )^* &  \Xi_0 \Lambda 
			\end{bmatrix} \geq 0 .
			\]
		\end{enumerate}
	\end{proposition}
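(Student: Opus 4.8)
The plan is to adapt the Lyapunov/Schur-complement argument behind the finite-dimensional result \cite[Theorem~17]{Waarde2020} to the operator setting, using Lemma~\ref{lem:Douglas} (Douglas' lemma) only implicitly and relying, as in the proof of Theorem~\ref{thm:equiv_frame1}, on the fact that a Bessel sequence whose synthesis operator is surjective is a frame (\cite[Theorem~5.5.1]{Christensen2016}). Throughout, for a right inverse $\Xi_0^{\dagger}$ of $\Xi_0$ I write $F \coloneqq \Xi_1\Xi_0^{\dagger}$, and I connect $\Lambda$ in statement~(ii) with $\Xi_0^{\dagger}$ through a positive operator $P$ via $P = \Xi_0\Lambda$ and $\Xi_0^{\dagger} = \Lambda P^{-1}$; the whole argument is carried out for the fixed $\gamma \in (0,1)$.

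For the implication (i) $\Rightarrow$ (ii), suppose $x_0$ is a frame for $X$, $\Xi_0^{\dagger} \in \mathcal{L}(X,\ell^2(\mathbb{N}))$ is a right inverse of $\Xi_0$, and $\|F^k\| \le M\rho^k$ for all $k \in \mathbb{N}_0$ with some $M \ge 1$ and $\rho \in (0,\gamma)$. I would set
\[
P \coloneqq \gamma^{-2}\sum_{k=0}^{\infty}\gamma^{-2k}F^k(F^*)^k ,
\]
which converges in operator norm precisely because $\rho/\gamma < 1$; it is self-adjoint, satisfies $P \ge \gamma^{-2}I$, and a telescoping computation gives $FPF^* = \gamma^2 P - I$. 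Putting $\Lambda \coloneqq \Xi_0^{\dagger}P \in \mathcal{L}(X,\ell^2(\mathbb{N}))$, one gets $\Xi_0\Lambda = P$ (self-adjoint) and $\Xi_1\Lambda = FP$, and the block operator in statement~(ii) equals $\diag(\gamma^2 P - I - FPF^*,\,0) + \begin{bmatrix} F \\ I\end{bmatrix} P \begin{bmatrix} F^* & I\end{bmatrix}$, a sum of two positive operators since its first summand vanishes; hence statement~(ii) holds.

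For (ii) $\Rightarrow$ (i), set $P \coloneqq \Xi_0\Lambda$, which is self-adjoint by hypothesis. Since a positive block operator has positive diagonal blocks, the $(1,1)$-block forces $\gamma^2 P - I \ge 0$, so $P \ge \gamma^{-2}I$ is boundedly invertible; then $\Xi_0^{\dagger} \coloneqq \Lambda P^{-1}$ is a bounded right inverse of $\Xi_0$, so $\Xi_0$ is surjective and $x_0$ is a frame for $X$ by \cite[Theorem~5.5.1]{Christensen2016}. Writing $F \coloneqq \Xi_1\Xi_0^{\dagger}$, so that $\Xi_1\Lambda = FP$, I would evaluate the quadratic form of the block operator at vectors of the form $(x,-F^*x)$ to obtain $FPF^* \le \gamma^2 P - I$ (equivalently, the Schur complement with respect to the invertible block $P$). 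Using $\|P\|^{-1}P \le I$ and $P \ge \gamma^{-2}I$, this gives $FPF^* \le cP$ with $c \coloneqq \gamma^2 - \|P\|^{-1} \in [0,\gamma^2)$; iterating this and sandwiching $P$ between $\gamma^{-2}I$ and $\|P\|I$ yields $\|F^k\| \le \gamma\|P\|^{1/2}(\sqrt{c})^{\,k}$ for all $k \in \mathbb{N}_0$, with $\gamma\|P\|^{1/2} \ge 1$, so $F = \Xi_1\Xi_0^{\dagger}$ is power stable with decay rate $\sqrt{c} \in (0,\gamma)$ (the degenerate case $c=0$ forces $P = \gamma^{-2}I$ and then $F=0$, which is trivially power stable with any rate in $(0,\gamma)$). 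Hence statement~(i) holds.

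The main obstacle is the infinite-dimensional bookkeeping that distinguishes this from the matrix case: one must keep $P$ simultaneously bounded and bounded below, and convergence of the Lyapunov series in the first implication genuinely requires the strict inequality $\rho < \gamma$ — which is why statement~(i) here, unlike Theorems~\ref{thm:equiv_frame1} and~\ref{thm:equiv_frame2}, asks for a decay rate strictly inside $(0,\gamma)$. The Schur-complement passage also needs care: I would avoid appealing to invertibility of the full block when deducing $FPF^* \le \gamma^2 P - I$ (using the $(x,-F^*x)$ substitution), but I do use bounded invertibility of $P$, granted for free by the $(1,1)$-block, to define $\Xi_0^{\dagger} = \Lambda P^{-1}$ and to invoke the frame characterization.
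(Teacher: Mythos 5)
Your proof is correct and follows essentially the same route as the paper, which only sketches the argument by reducing to the invertibility of $P=\Xi_0\Lambda\geq\gamma^{-2}I$, the frame characterization via surjectivity of $\Xi_0$, and the Schur-complement/Lyapunov argument of the finite-dimensional case \cite[Theorem~17]{Waarde2020}. You simply supply in full the details the paper delegates to the cited references (the Lyapunov series for $P$, the telescoping identity $FPF^*=\gamma^2P-I$, and the iteration $F^kP(F^*)^k\leq c^kP$ yielding the strict decay rate $\sqrt{c}<\gamma$), and these details check out.
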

	\begin{proof}
		Suppose that statement~(ii) holds.
		By assumption, $\Xi_0 \Lambda \geq  (1/\gamma^2) I$.
		Then
		$\Xi_0 \Lambda$ is invertible in 
		$\mathcal{L}(X)$; see, e.g., \cite[Lemma A.3.83]{Curtain2020}.
		Since $\Xi_0$ is surjective,
		$x_0$
		is a frame for $X$ by \cite[Theorem~5.5.1]{Christensen2016}.
		The Schur-complement argument holds for bounded operators by the same proof as in the matrix case \cite[Section~A.5.5]{Boyd2004}.
		Therefore,
		the rest of the proof follows the approach
		based on Schur complements and Lyapunov inequalities
		as in the finite-dimensional case \cite[Theorem~17]{Waarde2020};
		see, e.g., 
		\cite[Theorem~II.6.1]{Eisner2010} and
		\cite[Exercise 4.22]{Curtain2020}
		for Lyapunov-based stability analysis 
		of discrete-time infinite-dimensional systems.
	\end{proof}

	\section{Informativity of finite-length data with partial system knowledge}
	\label{sec:finite_data}
	The result obtained so far requires data of infinite length.
	In this section, we address stabilization with finite-length data under the assumption that
	some partial information on the true system is already known. 
	
	\subsection{Assumption on system structures}
	Let $N \in \mathbb{N}$. Let
	$x_1 = (x_1(k))_{k=1}^N$ and
	$x_0 = (x_0(k))_{k=1}^N$ be finite sequences in $X$. Let
	$u_0 = (u_0(k))_{k=1}^N$ be a finite sequence in $U$.
	We call $(x_1,x_0,u_0)$ {\em finite data (of length $N$)}.
	Instead of Assumption~\ref{assump:x1x0u0_relation},
	here we assume that 
	the data are generated   from the true system $(A_s,B_s)$ on a finite horizon.
	\begin{assumption}
		\label{assump:x1x0u0_relation_finite}
		The finite data $(x_1,x_0,u_0)$ satisfy
		\begin{equation}
			\label{eq:x1x0u0_relation_finite}
			x_1(k) = A_sx_0(k) + B_s u_0(k)
			\quad \text{for all $k =1,\dots,N$}.
		\end{equation}
	\end{assumption}

	We make the following assumption on the operator $A_s$ of the true system  $(A_s,B_s)$.
	\begin{assumption}
		\label{assump:system_decomposition}
		Let $\gamma_- \in (0,1)$. Let the subspaces $X_+$ and $X_-$ 
		of $X$ 
		satisfy $X = X_+ \dotplus X_-$
		and $\dim X_+ < \infty$. The operator $A_s \in \mathcal{L}(X)$ satisfies
		$A_s  X_- \subseteq X_-$ and 
		$\sr(A_s|_{X_-}) \leq\gamma_-$.
	\end{assumption}
	This assumption is satisfied
	for some constant $\gamma_-\in(0,1)$ and some
	subspaces $X_+$ and $X_-$, e.g., when the true system $(A_s,B_s)$
	is stabilizable and $B_s$ is compact; see
	\cite[Theorem~4]{Logemann1992}.
	In this section, we suppose that 
	the partial information on $A_s$ described in
	Assumption~\ref{assump:system_decomposition} is given 
	for a known constant $\gamma_- \in (0,1)$ and known
	subspaces $X_+$ and $X_-$.
	For such $A_s$,
	it is enough to consider the subset $\mathcal{A}(X)$ of $\mathcal{L}(X)$
	defined
	by
	\[
	\mathcal{A}(X) \coloneqq 
	\{
	A \in \mathcal{L}(X): 
	\text{$A  X_- \subseteq X_-$ and $\sr(A|_{X_-}) \leq \gamma_-$}
	\}.
	\]
	Let $\Pi \in \mathcal{L}(X)$ be
	the projection onto $X_+$ along $X_-$, i.e., $\Pi x \coloneqq x_+$ for $x = x_++x_- \in X$ with $x_+ \in X_+$ and $x_- \in X_-$.
	Using $X=X_+ \dotplus X_-$, we decompose
	$A \in \mathcal{A}(X)$ and $B \in \mathcal{L}(U,X)$ as
	\begin{equation}
		\label{eq:AB_decomposition}
		A = 
		\begin{bmatrix}
			A_+ & 0 \\
			A_{\pm} & A_-
		\end{bmatrix}\quad \text{and}\quad 
		B = 
		\begin{bmatrix}
			B_+ \\ B_-
		\end{bmatrix},
	\end{equation}
	where 
	$A_+ \coloneqq \Pi A|_{X_+}$,
	$A_{\pm} \coloneqq (I-\Pi) A|_{X_+}$, 
	$A_- \coloneqq (I-\Pi) A|_{X_-}$,
	$B_+ \coloneqq \Pi B$, and
	$B_- \coloneqq (I-\Pi)B$.
	
	Given finite data $(x_1,x_0,u_0)$ of length $N$, we define
	the set $\Sigma_{\is,\mathcal{A}}$ of systems by
	\[
	\Sigma_{\is,\mathcal{A}} \coloneqq  \{
	(A,B) \in \mathcal{A}(X) \times \mathcal{L}(U,X):
	x_1(k) = Ax_0(k) + B u_0(k)~\text{for all $k=1,\dots,N$} \}.
	\]
	All systems in $\Sigma_{\is,\mathcal{A}}$ are compatible with
	the finite data $(x_1,x_0,u_0)$ and the prior knowledge of the true system $(A_s,B_s)$
	given in Assumption~\ref{assump:system_decomposition}.
	Under Assumptions~\ref{assump:x1x0u0_relation_finite} and \ref{assump:system_decomposition}, we have
	$(A_s,B_s) \in \Sigma_{\is,\mathcal{A}}$.
	By
	extending the sequences
	by zeros for $k \geq N+1$,
	we define the synthesis operators $\Xi_1, \Xi_0\in \mathcal{L}(\mathbb{C}^N,X)$ and $\Upsilon_0\in \mathcal{L}(\mathbb{C}^N,U)$ associated with $x_1$,
	$x_0$, and $u_0$, respectively,
	in the same way as in \eqref{eq:synthesis}.
	Let $\widetilde \Pi \in \mathcal{L}(X,X_+)$ be defined by
	$\widetilde \Pi x \coloneqq \Pi x$ for $x \in X$.
	Define 
	$\Xi_{1+},\Xi_{0+} \in \mathcal{L}(\mathbb{C}^N,X_+)$ 
	by
	\begin{equation}
		\label{eq:Xi+_def}
		\Xi_{1+} \coloneqq
		\widetilde \Pi \Xi_1\quad \text{and} \quad 
		\Xi_{0+} \coloneqq \widetilde \Pi \Xi_0.
	\end{equation}
	Since $\dim X_+ < \infty$, we can regard
	$\Xi_{1+}$ and $\Xi_{0+}$  
	as matrices.
	We will use $\widetilde\Pi$ to construct 
	a right inverse of $\Xi_{0+}$.
	We give an example of a partially unknown system that 
	can be handled by using the set $\Sigma_{\is,\mathcal{A}}$.
	\begin{example}
		\label{ex:heat_ODE}
		Consider the following 
		cascade system of the heat equation
		and  a finite-dimensional system:
		\begin{equation*}
			\left\{
			\begin{alignedat}{2}
				v(k+1) &= A_v v(k) + B_vu(k),\quad k \in \mathbb{N}_0;\qquad v(0)=v^0 \in \mathbb{C}^m, \\
				\frac{\partial \zeta}{\partial t}(\xi,t) &= 
				a\frac{\partial^2 \zeta}{\partial \xi^2}(\xi,t) + b\zeta(\xi,t),\quad 
				t \geq 0 
				;\qquad 
				&&
				\hspace{-95pt}
				\zeta(\xi,0) = \zeta^0(\xi), \\
				\frac{\partial \zeta}{\partial \xi}(0,t) &= -C_v v(k),
				\quad \frac{\partial \zeta}{\partial \xi}(1,t) = 0,\quad 
				k\tau \leq t < (k+1)\tau,~k \in \mathbb{N}_0,
			\end{alignedat}
			\right.
		\end{equation*}
		where 
		$A_v \in \mathbb{C}^{m \times m}$, $B_v \in \mathbb{C}^{m \times p}$, and $C_v \in \mathbb{C}^{1 \times m}$
		are unknown matrices. We also assume that the constants
		$a>0$ and $b \in \mathbb{R}$ are unknown but that 
		the bounds $a_0>0$ and $b_0 \in \mathbb{R}$
		satisfying
		$a \geq a_0$ and $b \leq b_0$ are already obtained.
		The sampling period $\tau >0$ is known, and we 
		measure $v(k)$ and $\zeta(\xi,k\tau)$, $\xi \in (0,1)$,
		at each $k \in \mathbb{N}_0$.
		
		The eigenvalues $(\lambda_n)_{n \in \mathbb{N}_0}$ of 
		the generator of the $C_0$-semigroup on $Z \coloneqq 
		L^2(0,1)$ governing
		the state evolution of the uncontrolled heat equation
		are 
		$\lambda_n = \lambda_n(a,b) = -a\pi^2 n^2 + b$ for $n \in \mathbb{N}_0$.
		Let $\phi_n \in Z$ be the eigenvector
		corresponding to $\lambda_n$
		for $n \in \mathbb{N}_0$. Then
		$\phi_0(\xi) = 1$ and 
		$\phi_n(\xi) = \sqrt{2} \cos(n\pi \xi )$ for 
		$\xi \in (0,1)$ and
		$n \in \mathbb{N}$.
		Note that the eigenvectors do not depend on the 
		unknown parameters $a$ and $b$.
		Define 
		$A_{vz} \in \mathcal{L}(\mathbb{C}^m, Z)$
		and $A_z \in \mathcal{L}(Z)$ by
		\begin{align*}
			A_{vz} v &\coloneqq 
			C_vv \sum_{n=0}^{\infty} 
			\left(
			\int_0^{\tau} e^{\lambda_n t} dt
			\right) \phi_n(0) \phi_n,\quad v \in \mathbb{C}^m, \\
			A_z z &\coloneqq 
			\sum_{n=0}^{\infty} e^{\lambda_n \tau }
			\langle z,\phi_n \rangle \phi_n,\quad z \in L^2(0,1).
		\end{align*}
		Applying the argument in Example~\ref{ex:sampled_data}
		to the heat equation,
		we obtain the following discrete-time system
		with state space $X\coloneqq \mathbb{C}^m \times Z$ and input space $U \coloneqq \mathbb{C}^p$:
		\begin{equation*}
			\begin{bmatrix}
				v(k+1) \\ z(k+1)
			\end{bmatrix}
			= 
			\begin{bmatrix}
				A_v & 0 \\
				A_{vz} & A_z 
			\end{bmatrix}
			\begin{bmatrix}
				v(k) \\ z(k)
			\end{bmatrix} + 
			\begin{bmatrix}
				B_v \\ 0
			\end{bmatrix}
			u(k),\quad 
			k \in \mathbb{N}_0,
		\end{equation*}
		where $z(k) \coloneqq \zeta(\cdot ,k\tau)$ for
		$k \in \mathbb{N}_0$.

		Let $\gamma_- \in (0,1)$,
		and take $n_0
		\in \mathbb{N}_0$ such that
		$e^{\lambda_{n_0}(a_0,b_0)\tau} \leq \gamma_-$, i.e., 
		\begin{equation}
			\label{eq:n0_cond}
			n_0^2 \geq
			\frac{\log(1/\gamma_- ) + b_0\tau}{a_0\pi^2 \tau}.
		\end{equation}
		We decompose $Z$ as $Z = Z_+ \dotplus Z_-$,
		where
		$Z_{+}$ is the linear span of $\{\phi_n: 
		n < n_0 \}$ and 
		$Z_{-}$ is the closed linear span of $\{\phi_n:
		n \geq n_0  \}$. Then we obtain
		$\dim Z_{+} = n_0 < \infty$ and
		$A_zZ_{-} \subseteq Z_{-}$.
		Moreover,
		$\sr(A_{z}|_{Z_{-}}) \leq \gamma_- $ by \eqref{eq:n0_cond}.
		Therefore, the true system 
		satisfies Assumption~\ref{assump:system_decomposition}
		with $X_+ \coloneqq \mathbb{C}^m \times Z_{+}$
		and $X_- \coloneqq Z_{-}$.
		\hspace*{\fill} $\triangle$ 
		\end{example}
			
			\subsection{\texorpdfstring{Informativity for stabilization on $\bm{X_+}$}{Informativity for stabilization on X\_+}}
			If the feedback gain
			$K \in \mathcal{L}(X,U)$ satisfies $K|_{X_-} = 0$, i.e., is 
			of the form
			\begin{equation}
				\label{eq:K+}
				K = K_+\Pi,\quad 
				\text{where $K_+ \coloneqq K|_{X_+}$},
			\end{equation}
			then the system $(A,B) \in \mathcal{A}(X) \times \mathcal{L}(U,X)$
			written as \eqref{eq:AB_decomposition} satisfies
			\begin{equation}
				\label{eq:ABK_rep}
				A+BK = \begin{bmatrix}
					A_++B_+K_+ & 0 \\
					A_{\pm} +B_-K_+ & A_-
				\end{bmatrix}.
			\end{equation}
			Recall that $\sr(A|_{X_-}) \leq \gamma_-$.
			For a fixed $\gamma \geq \gamma_-$,
			we have that 
			$\sr (A+BK) \leq \gamma$ is equivalent to $\sr(A_++B_+K_+) \leq \gamma$.
			Throughout this section, we focus on feedback gains of the form 
			\eqref{eq:K+} and define the subspace $\mathcal{K}(X,U)$ of $\mathcal{L}(X,U)$
			by
			\[
			\mathcal{K}(X,U) \coloneqq \{
			K \in \mathcal{L}(X,U): K|_{X_-} = 0
			\}.
			\]
			For $K \in \mathcal{K}(X,U)$ and $\gamma \geq \gamma_-$, 
			we define  the set $\Sigma_{K,\gamma,\mathcal{A}}$ of systems by
			\begin{align*}
				\Sigma_{K,\gamma,\mathcal{A}} \coloneqq \{
				(A,B) \in \mathcal{A}(X) \times \mathcal{L}(U,X):
				&~\text{There exists $M\geq 1$ such that} \\
				&~\text{$\|(A+BK)^k\| \leq M \gamma^k$ for all $k \in \mathbb{N}_0$}
				\}.
			\end{align*}

			We introduce a notion 
			of informativity for stabilization tailored to the system sets
			$\Sigma_{\is,\mathcal{A}}$ and $\Sigma_{K,\gamma,\mathcal{A}}$.
			\begin{definition}
				Suppose that Assumption~\ref{assump:system_decomposition} holds,
				and let $\gamma \in [\gamma_-,1)$.
				The finite data $(x_1,x_0,u_0)$ are called
				{\em informative for stabilization on $X_+$ with decay rate $\gamma$} if there exist an operator 
				$K \in \mathcal{K}(X,U)$ 
				such that $\Sigma_{\is,\mathcal{A}} \subseteq \Sigma_{K,\gamma,\mathcal{A}}$.
				An operator $K\in \mathcal{L}(X,U)$ is called
				a {\em stabilizing gain on $X_+$ with decay rate $\gamma$  for $(x_1,x_0,u_0)$} if
				$\Sigma_{\is,\mathcal{A}} \subseteq \Sigma_{K,\gamma,\mathcal{A}}$.
			\end{definition}
			
			The following result gives a necessary and sufficient condition
			for
			informativity for stabilization on $X_+$,
			which can be checked by using only matrices.
			\begin{theorem}
				\label{thm:informative_stabilization_finite}
				Suppose that Assumptions~\ref{assump:x1x0u0_relation_finite} and \ref{assump:system_decomposition} hold
				for the
				finite data $(x_1,x_0,u_0)$
				of length $N \in \mathbb{N}$ and the true system $(A_s,B_s)$.
				Let $(\Xi_1,\Xi_0,\Upsilon_0)$ be the synthesis operators 
				associated with $(x_1,x_0,u_0)$, 
				and define $\Xi_{1+},\Xi_{0+}\in \mathcal{L}(\mathbb{C}^N,X_+)$ by \eqref{eq:Xi+_def}.
				Then the following statements are equivalent for a fixed $\gamma \in (\gamma_-,1)$:
				\begin{enumerate}
					[label=\upshape(\roman*), leftmargin=*, widest=ii]
					\item 
					The finite data $(x_1,x_0,u_0)$ are
					informative for stabilization on $X_+$ with
					some  decay rate in $[\gamma_-,\gamma)$.
					
					\item $\Ran \Xi_{0+} = X_+$, and there exists a right inverse $\Xi_{0+}^{\dagger} \in \mathcal{L}(X_+,
					\mathbb{C}^N)$ of $\Xi_{0+} $ such that 
					$\Xi_{1+}\Xi_{0+}^{\dagger}$ is power stable with some  decay rate in $[\gamma_-,\gamma)$.
				\end{enumerate}
			\end{theorem}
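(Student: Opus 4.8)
The plan is to reduce the statement to a purely finite-dimensional informativity-for-stabilization problem on $X_+$ and then apply the finite-dimensional theory, which becomes transparent once the infinite-dimensional range-closedness obstructions are gone. First I would project the data onto $X_+$. Since every $A\in\mathcal{A}(X)$ leaves $X_-$ invariant, $\Pi A=\Pi A\Pi$, so applying $\widetilde\Pi$ to \eqref{eq:x1x0u0_relation_finite} and using the block decomposition \eqref{eq:AB_decomposition} together with \eqref{eq:Xi+_def} gives $\Xi_{1+}=A_+\Xi_{0+}+B_+\Upsilon_0$ for $(A_s,B_s)$, and the same identity for every $(A,B)\in\Sigma_{\is,\mathcal{A}}$. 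Introduce the finite-dimensional system set
\[
\mathcal{S}_+\coloneqq\{(A_+,B_+)\in\mathcal{L}(X_+)\times\mathcal{L}(U,X_+):\Xi_{1+}=A_+\Xi_{0+}+B_+\Upsilon_0\}.
\]
I would then show that $(A,B)\mapsto(\Pi A|_{X_+},\Pi B)$ maps $\Sigma_{\is,\mathcal{A}}$ \emph{onto} $\mathcal{S}_+$: one inclusion is the projection just described, while for surjectivity, given $(A_+,B_+)\in\mathcal{S}_+$ I would assemble $(A,B)$ from these upper blocks together with the \emph{true} lower blocks $A_{s,\pm},A_{s,-},B_{s,-}$ of the decomposition of $(A_s,B_s)$; then the $(1,2)$-block of $A$ vanishes and $A|_{X_-}=A_s|_{X_-}$, so $A\in\mathcal{A}(X)$, the $X_+$-part of the data relation holds by construction, and its $X_-$-part coincides with the $X_-$-part of $x_1(k)=A_sx_0(k)+B_su_0(k)$, so $(A,B)\in\Sigma_{\is,\mathcal{A}}$.

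Next I would analyse the closed loop. For $K=K_+\Pi\in\mathcal{K}(X,U)$ it has the block-triangular form \eqref{eq:ABK_rep}, with diagonal blocks $A_++B_+K_+$ and $A_-$, and $\sr(A_-)\le\gamma_-<\gamma$. A routine estimate shows that such a block-triangular operator is power stable with decay rate $\gamma_*$ whenever $\sr(A_++B_+K_+)\le\gamma_*$ and $\gamma_-<\gamma_*$: expanding $(A+BK)^k$, the off-diagonal block is a convolution of the two diagonal powers and is bounded by $\mathrm{const}\cdot k(\gamma')^{k-1}\le\mathrm{const}\cdot\gamma_*'^{\,k}$ for any $\gamma'\in(\max\{\gamma_*,\gamma_-\},\gamma_*')$ and $\gamma_*'\in(\gamma',\gamma)$. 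Conversely, the $(1,1)$-block of $(A+BK)^k$ equals $(A_++B_+K_+)^k$, so power stability of $A+BK$ with decay rate $\gamma_*$ forces $\sr(A_++B_+K_+)\le\gamma_*$. Combining this with the onto property of the previous step, and noting that informativity with a decay rate below $\gamma_-$ trivially upgrades to decay rate $\gamma_-$, I obtain: the finite data are informative for stabilization on $X_+$ with some decay rate in $[\gamma_-,\gamma)$ if and only if $\mathcal{S}_+$ is informative for stabilization with some decay rate in $[\gamma_-,\gamma)$ in the standard finite-dimensional sense, with gains matched by $K\leftrightarrow K|_{X_+}$.

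Finally, the reduced problem lives on the finite-dimensional state space $X_+$ with finite-dimensional data space $\mathbb{C}^N$ and finite-rank $\Upsilon_0$, so the proof of Theorem~\ref{thm:informative_stabilization} applies verbatim with $\ell^2(\mathbb{N})$ replaced by $\mathbb{C}^N$ and $X$ by $X_+$ (see also \cite[Theorem~16]{Waarde2020}). In this setting every range is closed, so $\Xi_{0+}^{\dagger}$ is genuinely bounded, and the auxiliary inclusion \eqref{eq:im_Xi0_inclusion} is automatic: Lemma~\ref{lem:ker_lemma} applied to the reduced data gives $\Ran\!\begin{bmatrix}I\\K_+\end{bmatrix}\subseteq\overline{\Ran\!\begin{bmatrix}\Xi_{0+}\\\Upsilon_0\end{bmatrix}}=\Ran\!\begin{bmatrix}\Xi_{0+}\\\Upsilon_0\end{bmatrix}$, hence $\Ran\!\begin{bmatrix}\Xi_{0+}\\K_+\Xi_{0+}\end{bmatrix}=\begin{bmatrix}I\\K_+\end{bmatrix}\Ran\Xi_{0+}\subseteq\Ran\!\begin{bmatrix}\Xi_{0+}\\\Upsilon_0\end{bmatrix}$. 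Thus $\mathcal{S}_+$ is informative for stabilization with decay rate $\gamma_*$ precisely when $\Ran\Xi_{0+}=X_+$ and some bounded right inverse $\Xi_{0+}^{\dagger}$ of $\Xi_{0+}$ makes $\Xi_{1+}\Xi_{0+}^{\dagger}$ power stable with decay rate $\gamma_*$; quantifying over $\gamma_*\in[\gamma_-,\gamma)$ and combining with the previous step proves (i) $\Leftrightarrow$ (ii). The genuinely infinite-dimensional difficulties (non-closed ranges, discontinuity of the spectral radius) are all absorbed into Theorem~\ref{thm:informative_stabilization} and Lemma~\ref{lem:ker_lemma}; the main obstacle that remains is the reduction itself — the surjectivity of $\Sigma_{\is,\mathcal{A}}\to\mathcal{S}_+$ through the correct choice of true lower blocks, together with the block-triangular power-stability estimate and its decay-rate bookkeeping, where the strict inequality $\gamma>\gamma_-$ supplies the slack guaranteed by the hypothesis $\gamma\in(\gamma_-,1)$.
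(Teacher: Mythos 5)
Your proposal is correct and follows essentially the same route as the paper: the reduction map $(A,B)\mapsto(\Pi A|_{X_+},\Pi B)$, with surjectivity onto the projected system set obtained by filling in the \emph{true} lower blocks of $(A_s,B_s)$, is exactly Lemma~\ref{lem:data_reduction}; the block-triangular closed-loop analysis is \eqref{eq:ABK_rep}; and the range inclusion followed by Douglas' lemma is Lemma~\ref{lem:ker_lemma_finite} together with the paper's proof of the theorem. One step needs repair, though it is easily fixed: you cannot apply Lemma~\ref{lem:ker_lemma} \emph{verbatim} to the reduced data, because its hypothesis is $\Sigma_{\is}\subseteq\Sigma_{K,M,\gamma}$ with a constant $M$ that is \emph{uniform} over the system set --- this uniformity is what makes the resolvent-bound/Vitali argument work --- whereas statement~(i), through the definition of $\Sigma_{K,\gamma,\mathcal{A}}$, only provides a per-system constant $M$ and hence only the spectral-radius bound $\sr(A_++B_+K_+)\le\gamma_+$ for each compatible system. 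The paper's Lemma~\ref{lem:ker_lemma_finite} addresses exactly this point by replacing the resolvent argument with the continuity of the spectral radius for matrices, available because $\dim X_+<\infty$; with that substitution your argument goes through. On the other side of the ledger, your explicit treatment of the block-triangular power-stability bookkeeping --- using the slack $\gamma>\gamma_-$ to absorb polynomial growth from Jordan blocks and from the convolution term, and noting that $\sr(A_-)\le\gamma_-$ only yields power stability at rates strictly above $\gamma_-$ --- is more detailed than what the paper writes down for the implication (ii)~$\Rightarrow$~(i).
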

			
			To prove Theorem~\ref{thm:informative_stabilization_finite},
			the following lemma is useful.
			\begin{lemma}
				\label{lem:data_reduction}
				Suppose that Assumptions~\ref{assump:x1x0u0_relation_finite} and \ref{assump:system_decomposition} hold
				for the
				finite data $(x_1,x_0,u_0)$
				of length $N \in \mathbb{N}$ and the true system $(A_s,B_s)$.
				Let $\Pi \in \mathcal{L}(X)$ be
				the projection onto $X_+$ along $X_-$.
				Then the following statements are equivalent
				for $A_+ \in \mathcal{L}(X_+)$ and
				$B_+ \in \mathcal{L}(U,X_+)$.
				\begin{enumerate}[label=\upshape(\roman*), leftmargin=*, widest=ii]
					\item There exists $(A,B) \in \Sigma_{\is,\mathcal{A}}$ such that 
					$A_+ = \Pi A|_{X_+}$ and $B_+ = \Pi B$.
					\item 
					$
					\Pi x_1(k) = A_+ \Pi x_0(k) + B_+ u_0(k)
					$
					for all $k =1,\dots,N$.
				\end{enumerate}
			\end{lemma}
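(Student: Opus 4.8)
The plan is to establish the two implications separately. The implication (i) $\Rightarrow$ (ii) is obtained simply by projecting the data relation, while (ii) $\Rightarrow$ (i) is proved by constructing a compatible system in $\Sigma_{\is,\mathcal{A}}$ whose $X_+$-block is the prescribed pair $(A_+,B_+)$ and whose remaining blocks are copied from the true system $(A_s,B_s)$.

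For (i) $\Rightarrow$ (ii), suppose $(A,B) \in \Sigma_{\is,\mathcal{A}}$ satisfies $A_+ = \Pi A|_{X_+}$ and $B_+ = \Pi B$. From the block decomposition~\eqref{eq:AB_decomposition} one reads off $\Pi A = A_+\Pi$ and $\Pi B = B_+$. Applying $\Pi$ to $x_1(k) = Ax_0(k) + Bu_0(k)$ then yields $\Pi x_1(k) = A_+\Pi x_0(k) + B_+ u_0(k)$ for $k=1,\dots,N$, which is statement~(ii).

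For (ii) $\Rightarrow$ (i), recall that by Assumptions~\ref{assump:x1x0u0_relation_finite} and \ref{assump:system_decomposition} the true system satisfies $(A_s,B_s) \in \Sigma_{\is,\mathcal{A}}$; write it in the form~\eqref{eq:AB_decomposition} with blocks $A_{s,+}$, $A_{s,\pm}$, $A_{s,-}$, $B_{s,+}$, $B_{s,-}$. The plan is to define
\[
A \coloneqq \begin{bmatrix} A_+ & 0 \\ A_{s,\pm} & A_{s,-} \end{bmatrix}, \qquad
B \coloneqq \begin{bmatrix} B_+ \\ B_{s,-} \end{bmatrix},
\]
which are bounded since $\Pi \in \mathcal{L}(X)$ and each block is bounded. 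By construction $A|_{X_-} = A_{s,-} = A_s|_{X_-}$, so $A X_- \subseteq X_-$ and $\sr(A|_{X_-}) = \sr(A_s|_{X_-}) \leq \gamma_-$, i.e., $A \in \mathcal{A}(X)$; moreover $\Pi A|_{X_+} = A_+$ and $\Pi B = B_+$. It remains to verify the data relation $x_1(k) = Ax_0(k) + Bu_0(k)$ for $k=1,\dots,N$. Splitting both sides along $X = X_+ \dotplus X_-$, the $\Pi$-component is $\Pi x_1(k) = A_+\Pi x_0(k) + B_+ u_0(k)$, which is exactly~(ii); the $(I-\Pi)$-component is
\[
(I-\Pi)\big(Ax_0(k) + Bu_0(k)\big) = A_{s,\pm}\Pi x_0(k) + A_{s,-}(I-\Pi)x_0(k) + B_{s,-}u_0(k) = (I-\Pi)\big(A_s x_0(k) + B_s u_0(k)\big),
\]
which holds because $(A_s,B_s)$ is compatible with the data. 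Adding the two components gives the full relation, so $(A,B) \in \Sigma_{\is,\mathcal{A}}$, establishing~(i).

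The argument is essentially bookkeeping with the block-triangular decomposition, and I do not expect any analytic difficulty. The one point requiring care is that membership in $\mathcal{A}(X)$ forces the $X_-$-related blocks of $A$ (and the $X_-$-component of $B$) to be left unchanged from $(A_s,B_s)$; it is precisely this choice that makes the $(I-\Pi)$-component of the data relation hold automatically, so that only the $\Pi$-component — namely statement~(ii) — needs to be assumed.
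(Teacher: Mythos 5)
Your proof is correct and follows essentially the same route as the paper: the forward direction by projecting the data relation with $\Pi$ and using $\Pi A = A_+\Pi$, and the converse by constructing $(A,B)$ with the prescribed $X_+$-blocks and the remaining blocks copied from $(A_s,B_s)$, so that the $(I-\Pi)$-component of the data relation is inherited from Assumption~\ref{assump:x1x0u0_relation_finite}. No gaps.
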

			\begin{proof}
				First, we show the implication
				(ii) $\Rightarrow$ (i).
				Using the decomposition $X = X_+ \dotplus X_-$,
				we define
				$A \in \mathcal{L}(X)$ and 
				$B \in \mathcal{L}(U,X)$ by
				\begin{equation}
					A  \coloneqq
					\begin{bmatrix}
						A_+ & 0 \\
						(I-\Pi) A_s|_{X_+} & A_s|_{X_-} 
					\end{bmatrix}\quad \text{and} \quad 
					B \coloneqq
					\begin{bmatrix}
						B_+ \\ (I - \Pi)B_s 
					\end{bmatrix}.
				\end{equation}
				Then 
				$A_+ = \Pi A|_{X_+}$ and $B_+ = \Pi B$.
				Moreover,
				$A \in \mathcal{A}(X)$
				by Assumption~\ref{assump:system_decomposition}.
				Assumption~\ref{assump:x1x0u0_relation_finite}
				implies that
				\[
				(I-\Pi) x_1(k) =
				(I-\Pi) A_s\Pi x_{0}(k) 
				+ A_s(I- \Pi) x_0(k) + (I - \Pi)B_s u_0(k)
				\]
				for all $k = 1,\dots,N$.
				This together with statement~(ii) implies that
				\begin{align*}
					\begin{bmatrix}
						\Pi x_1(k) \\
						(I-\Pi) x_1(k)
					\end{bmatrix} = 
					\begin{bmatrix}
						A_+ & 0 \\
						(I-\Pi) A_s|_{X_+} & A_s|_{X_-} 
					\end{bmatrix}
					\begin{bmatrix}
						\Pi x_0(k) \\ (I- \Pi) x_0(k)
					\end{bmatrix} + 
					\begin{bmatrix}
						B_1 \\ (I - \Pi)B_s 
					\end{bmatrix} u_0(k)
				\end{align*}
				for all $k = 1,\dots,N$.
				Hence, $(A,B) \in \Sigma_{\is,\mathcal{A}}$.
				
				Next, we prove the implication
				(i) $\Rightarrow$ (ii).
				By
				$(A,B) \in \Sigma_{\is,\mathcal{A}}$,
				we obtain
				\begin{equation}
					\label{eq:Pi_x1_x0_u0}
					\Pi x_1(k) = \Pi Ax_0(k) + B_1 u_0(k)
				\end{equation}
				for all $k = 1,\dots,N$.
				Since $AX_- \subseteq X_-$,
				it follows that $\Pi A = A_+ \Pi$.
				Substituting this into \eqref{eq:Pi_x1_x0_u0}, 
				we conclude that statement (ii) holds.
			\end{proof}
			
			Using Lemma~\ref{lem:data_reduction}, we
			obtain a range property 
			of synthesis operators as in  \cite[Lemma~15]{Waarde2020} and
			Lemma~\ref{lem:ker_lemma}.
			\begin{lemma}
				\label{lem:ker_lemma_finite}
				Suppose that Assumptions~\ref{assump:x1x0u0_relation_finite} and \ref{assump:system_decomposition} hold
				for the
				finite data $(x_1,x_0,u_0)$
				of length $N \in \mathbb{N}$ and the true system $(A_s,B_s)$.
				Let $(\Xi_1,\Xi_0,\Upsilon_0)$ be the synthesis operators 
				associated with $(x_1,x_0,u_0)$.
				Assume that
				$K \in \mathcal{K}(X,U)$ satisfies
				$\Sigma_{\is,\mathcal{A}} \subseteq
				\Sigma_{K,\gamma, \mathcal{A}}$
				for some $\gamma \geq \gamma_-$, and let 
				$K_+ \coloneqq K|_{X_+}$. Then
				\begin{equation}
					\label{eq:Xi_Up_ran_inclusion_finite}
					\Ran \begin{bmatrix}
						I \\ K_+
					\end{bmatrix}
					\subseteq
					\Ran \begin{bmatrix}
						\Xi_{0+} \\ \Upsilon_0
					\end{bmatrix},
				\end{equation}
				where $\Xi_{0+}\in \mathcal{L}(\mathbb{C}^N,X_+)$ is as in \eqref{eq:Xi+_def}.
			\end{lemma}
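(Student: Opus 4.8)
The plan is to follow the strategy of Lemma~\ref{lem:ker_lemma}, but to run the whole argument on the finite-dimensional subspace $X_+$, transferring the stabilization hypothesis from $X$ to $X_+$ by means of Lemma~\ref{lem:data_reduction}. Since the domains $\mathbb{C}^N$ and $X_+$ are finite-dimensional, both ranges in \eqref{eq:Xi_Up_ran_inclusion_finite} are finite-dimensional and hence closed, so \eqref{eq:Xi_Up_ran_inclusion_finite} is equivalent to the kernel inclusion
\[
\Ker\begin{bmatrix}\Xi_{0+}^{*} & \Upsilon_0^{*}\end{bmatrix}
\subseteq
\Ker\begin{bmatrix}I & K_+^{*}\end{bmatrix}.
\]
Thus I would fix $(\xi_0,\upsilon_0)\in X_+\times U$ with $\Xi_{0+}^{*}\xi_0+\Upsilon_0^{*}\upsilon_0=0$ and aim to show $\xi_0+K_+^{*}\upsilon_0=0$.

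The core step is the claim that every pair $(A_0,B_0)\in\mathcal{L}(X_+)\times\mathcal{L}(U,X_+)$ with $A_0\Xi_{0+}+B_0\Upsilon_0=0$ satisfies $A_0+B_0K_+=0$. To prove it I would fix $(A_s,B_s)\in\Sigma_{\is,\mathcal{A}}$ (nonempty by Assumptions~\ref{assump:x1x0u0_relation_finite} and~\ref{assump:system_decomposition}), set $G_0\coloneqq A_0+B_0K_+$ and $C\coloneqq\Pi A_s|_{X_+}+(\Pi B_s)K_+$, and recall that $K=K_+\Pi$ by \eqref{eq:K+}. For each $n\in\mathbb{N}$, the pair $(\Pi A_s|_{X_+}+nA_0,\ \Pi B_s+nB_0)$ satisfies condition~(ii) of Lemma~\ref{lem:data_reduction}, because $(A_s,B_s)$ does and $A_0\Pi x_0(k)+B_0u_0(k)=0$ for $k=1,\dots,N$; hence there is $(A^{(n)},B^{(n)})\in\Sigma_{\is,\mathcal{A}}\subseteq\Sigma_{K,\gamma,\mathcal{A}}$ with $\Pi A^{(n)}|_{X_+}=\Pi A_s|_{X_+}+nA_0$ and $\Pi B^{(n)}=\Pi B_s+nB_0$. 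By \eqref{eq:ABK_rep}, $X_-$ is invariant under $A^{(n)}+B^{(n)}K$ and $\Pi(A^{(n)}+B^{(n)}K)^{k}|_{X_+}=(C+nG_0)^{k}$ for all $k$. Applying the bounded projection $\Pi$ to a power bound $\|(A^{(n)}+B^{(n)}K)^{k}\|\le M_n\gamma^{k}$ yields $\|(C+nG_0)^{k}\|\le\|\Pi\|\,M_n\gamma^{k}$ for all $k$, hence $\sr(C+nG_0)\le\gamma$ for every $n$. As $X_+$ is finite-dimensional, $\sr$ is continuous, so $\tfrac1n\sr(C+nG_0)=\sr(\tfrac1n C+G_0)\to\sr(G_0)$; since the left side is at most $\gamma/n$, we get $\sr(G_0)=0$. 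Replacing $(A_0,B_0)$ by $(G_0^{*}A_0,\ G_0^{*}B_0)$, which still satisfies $G_0^{*}A_0\Xi_{0+}+G_0^{*}B_0\Upsilon_0=G_0^{*}(A_0\Xi_{0+}+B_0\Upsilon_0)=0$, the same argument gives $\sr\!\big(G_0^{*}(A_0+B_0K_+)\big)=\sr(G_0^{*}G_0)=0$; since $G_0^{*}G_0\ge0$ on the finite-dimensional space $X_+$, this forces $G_0^{*}G_0=0$, i.e.\ $G_0=A_0+B_0K_+=0$, proving the claim.

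To finish, I would apply the claim to the rank-one pair $A_0x\coloneqq\langle x,\xi_0\rangle\zeta$ ($x\in X_+$) and $B_0u\coloneqq\langle u,\upsilon_0\rangle\zeta$ ($u\in U$) for an arbitrary $\zeta\in X_+\setminus\{0\}$: evaluating $A_0\Xi_{0+}+B_0\Upsilon_0$ on the unit vectors of $\mathbb{C}^N$ and using $\Xi_{0+}^{*}\xi_0+\Upsilon_0^{*}\upsilon_0=0$ shows $A_0\Xi_{0+}+B_0\Upsilon_0=0$, so the claim gives $A_0+B_0K_+=0$, which reads $\langle x,\xi_0+K_+^{*}\upsilon_0\rangle\zeta=0$ for all $x\in X_+$; since $\zeta\ne0$, $\xi_0+K_+^{*}\upsilon_0=0$. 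This establishes the kernel inclusion and hence \eqref{eq:Xi_Up_ran_inclusion_finite}.

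The main obstacle is the bookkeeping required to move everything onto $X_+$: one must use Lemma~\ref{lem:data_reduction} to see that perturbing only the $X_+$-blocks keeps the system inside $\Sigma_{\is,\mathcal{A}}$, and one must correctly identify the $X_+$-block of powers of the closed loop through the (not necessarily orthogonal) decomposition $X=X_+\dotplus X_-$, which is exactly where \eqref{eq:ABK_rep} and the boundedness of $\Pi$ enter. The compensating simplification relative to Lemma~\ref{lem:ker_lemma} is that, $X_+$ being finite-dimensional, continuity of the spectral radius replaces the resolvent estimates and Vitali's theorem used there, so no uniformity of the constants $M_n$ in $n$ is needed.
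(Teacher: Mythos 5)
Your proof is correct and follows essentially the same route as the paper: both reduce to the finite-dimensional subspace $X_+$ via Lemma~\ref{lem:data_reduction}, use continuity of the spectral radius on $X_+$ (in place of the resolvent/Vitali argument of Lemma~\ref{lem:ker_lemma}) to show that homogeneous-compatible pairs satisfy $A_0+B_0K_+=0$ via the self-adjoint squaring trick, then apply rank-one perturbations to obtain the kernel inclusion and invoke closedness of the finite-rank ranges. The only difference is organizational — you work directly with $(A_s,B_s)$ and argue through kernels from the outset, while the paper introduces the sets $\Sigma_{\is+}$, $\Sigma_{\is+}^0$ and defers the final steps to the earlier lemma.
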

			\begin{proof}
				Let $\Pi \in \mathcal{L}(X)$ be
				the projection onto $X_+$ along $X_-$.
				For the finite data $(\Pi x_{1}, \Pi x_{0}, u_0)$ of
				length $N$,
				define the subsets $\Sigma_{\is+}$  and $\Sigma_{\is+}^0$ of $ \mathcal{L}(X_+) \times \mathcal{L}(U,X_+)$
				by
				\begin{align*}
					\Sigma_{\is+} &\coloneqq 
					\{
					(A_+,B_+):
					\Pi x_1(k) = A_+ \Pi x_{0}(k) + B_+ u_0(k)~\text{for all
						$k =1,\dots,N$}
					\}, \\
					\Sigma_{\is+}^0 &\coloneqq 
					\{
					(A_+,B_+):
					0 = A_+ \Pi x_{0}(k) + B_+ u_0(k)~\text{for all
						$k =1,\dots,N$}
					\}.
				\end{align*}
				By assumption,
				$\sr(A + BK) \leq \gamma$
				for all $(A,B) \in \Sigma_{\is,\mathcal{A}}$.
				By Lemma~\ref{lem:data_reduction},
				for all $(A_+,B_+) \in \Sigma_{\is+}$,
				there exists $(A,B) \in \Sigma_{\is,\mathcal{A}}$ such that 
				$A_+ = \Pi A|_{X_+}$ and $B_+ = \Pi B$.
				Therefore, \eqref{eq:ABK_rep} implies that
				$\sr(A_++B_+K_+) \leq \gamma$
				for all $(A_+,B_+) \in \Sigma_{\is+}$.

				Let $(A_+,B_+)
				\in \Sigma_{\is+}$ and $(A_{0},B_{0})
				\in \Sigma_{\is+}^0$.
				Define $F \coloneqq A_++B_+K_+$ and $F_0 \coloneqq A_{0}+B_{0}K_+$.
				Then 
				for all $n \in \mathbb{N}$,
				$(A_++nA_{0}, B_++nB_{0}) \in \Sigma_{\is+}$
				and hence $\sr(F+nF_0)\leq \gamma$.
				Since $F+nF_0 \in \mathcal{L}(X_+)$ and $\dim X_+ < \infty$, 
				the continuity of the spectral radius 
				for matrices implies that 
				$\sr(F_0) = 0$.
				Therefore, the same argument as in the proof of 
				Lemma~\ref{lem:ker_lemma} can be applied to show that
				\[
				\Ran \begin{bmatrix}
					I \\ K_+
				\end{bmatrix} \subseteq
				\overline{\Ran \begin{bmatrix}
						\Xi_{0+} \\ \Upsilon_0
				\end{bmatrix}}.
				\]
				Since
				$\Xi_{0+} \in \mathcal{L}(\mathbb{C}^N,X_+)$ and 
				$\Upsilon_0 \in \mathcal{L}(\mathbb{C}^N,U)$ are 
				finite-rank operators, it follows that
				\[
				\Ran \begin{bmatrix}
					\Xi_{0+} \\ \Upsilon_0
				\end{bmatrix}
				\]
				is closed.
				Thus, \eqref{eq:Xi_Up_ran_inclusion_finite} holds.
			\end{proof}

			Now we are in a position to prove 
			Theorem~\ref{thm:informative_stabilization_finite}.
			\begin{proof}[Proof of Theorem~\ref{thm:informative_stabilization_finite}]
				Let $\Pi \in \mathcal{L}(X)$ be
				the projection onto $X_+$ along $X_-$.
				First, we prove the implication
				(ii) $\Rightarrow$ (i).
				Define $K_+ \coloneqq \Upsilon_0 \Xi_{0+}^{\dagger}$, and 
				let $(A,B) \in \Sigma_{\is,\mathcal{A}}$. Then
				we can show that $\Xi_1 = A\Xi_0+B\Upsilon_0$, as in
				Lemma~\ref{lem:Xi1_rep}. 
				Hence,
				\begin{equation}
					\label{eq:Xi1+Xi0+_a}
					\Xi_{1+}\Xi_{0+}^{\dagger} = 
					\Pi A\Xi_0 \Xi_{0+}^{\dagger} + 
					\Pi BK_+.
				\end{equation}
				Define $A_+ \coloneqq \Pi A|_{X_+}$
				and $B_+ \coloneqq \Pi B$. Since 
				$AX_- \subseteq X_-$, it follows that
				\begin{align}
					\label{eq:Xi1+Xi0+_b}
					\Pi A \Xi_0\Xi_{0+}^{\dagger} =
					A_+ \Pi \Xi_0 \Xi_{0+}^{\dagger} = A_+.
				\end{align}
				Combining \eqref{eq:Xi1+Xi0+_a}
				and \eqref{eq:Xi1+Xi0+_b}, we obtain
				$\Xi_{1+}\Xi_{0+}^{\dagger} = A_+ + B_+K_+$.
				By the assumption on the stability of 
				$\Xi_{1+} \Xi_{0+}^{\dagger}$,
				it follows that $A_+ + B_+K_+$
				is power stable with some decay rate in
				$[\gamma_-, \gamma)$.
				This and \eqref{eq:ABK_rep} imply that 
				if $K \in \mathcal{K}(X,U)$ is defined by
				$K \coloneqq K_+ \Pi $,
				then $A+BK$ is power stable with some decay rate in
				$[\gamma_-, \gamma)$.
				Hence, statement~(i) holds.
				
				Next, we prove the implication
				(i) $\Rightarrow$ (ii).
				By assumption and \eqref{eq:ABK_rep},
				there exist $K \in \mathcal{K}(X,U)$ 
				and $\gamma_+ \in [\gamma_-,\gamma)$
				such that
				$A_+ + B_+K_+$ is power stable with decay rate $\gamma_+$ for all $(A,B) \in \Sigma_{\is,\mathcal{A}}$, where $
				A_+ \coloneqq \Pi A|_{X_+}$, 
				$B_+ \coloneqq \Pi B$, and
				$K_+ \coloneqq K|_{X_+}$.
				Since \eqref{eq:Xi_Up_ran_inclusion_finite} holds
				by Lemma~\ref{lem:ker_lemma_finite},
				it follows that $\Ran \Xi_{0+} = X_+$. Moreover,
				by  \eqref{eq:Xi_Up_ran_inclusion_finite} and
				Lemma~\ref{lem:Douglas} (Douglas' lemma),
				there exists a right inverse 
				$\Xi_{0+}^{\dagger}  \in \mathcal{L}(X_+,
				\mathbb{C}^N)$ of $\Xi_{0+}$ such that 
				\[
				\begin{bmatrix}
					I \\ K_+
				\end{bmatrix} =
				\begin{bmatrix}
					\Xi_{0+} \\ \Upsilon_0
				\end{bmatrix}
				\Xi_{0+}^{\dagger}.
				\]
				By the same calculations as in
				\eqref{eq:Xi1+Xi0+_a} and 
				\eqref{eq:Xi1+Xi0+_b}, we obtain
				$\Xi_{1+}\Xi_{0+}^{\dagger} = A_++B_+K_+$ for all
				$(A,B) \in \Sigma_{\is,\mathcal{A}}$. Thus,
				$\Xi_{1+}\Xi_{0+}^{\dagger}$
				is power stable with decay rate $\gamma_+$.
			\end{proof}
			
			From the proof of Theorem~\ref{thm:informative_stabilization_finite},
			we obtain the structure of stabilizing gains
			on $X_+$.
			\begin{remark}[Structure of stabilizing gains on 
				$X_+$]
				\label{rem:K_fin_form}
				Let the assumptions of Theorem~\ref{thm:informative_stabilization_finite}
				be satisfied.
				If statement~(ii) of Theorem~\ref{thm:informative_stabilization_finite} 
				holds, then
				$
				\Upsilon_0 \Xi_{0+}^{\dagger} \Pi \in \mathcal{K}(X,U)$
				is a stabilizing gain on $X_+$ with decay rate $\gamma$  for $(x_1,x_0,u_0)$.
				Conversely, every stabilizing gain on $X_+$ with decay rate $\gamma$  for $(x_1,x_0,u_0)$ can
				be written as 
				$\Upsilon_0 \Xi_{0+}^{\dagger} \Pi$
				for some right inverse $\Xi_{0+}^{\dagger}$ 
				of $\Xi_{0+}$ satisfying
				the properties given in statement~(ii) of
				Theorem~\ref{thm:informative_stabilization_finite}.
				\hspace*{\fill} $\triangle$ 
			\end{remark}
			
			We can also characterize informativity for 
			stabilization on $X_+$ in terms of 
			operator inequalities on $X_+ \times X_+$.
			Since $\dim X_+ < \infty$, the inequalities can be
			written as linear matrix inequalities.
			Since the proof is the same as
			\cite[Theorem~17]{Waarde2020},
			it is omitted here.
			\begin{corollary}
				\label{coro:ope_ineq_finite}
				Suppose that Assumptions~\ref{assump:x1x0u0_relation_finite} and \ref{assump:system_decomposition} hold
				for the
				finite data $(x_1,x_0,u_0)$
				of length $N \in \mathbb{N}$ and the true system $(A_s,B_s)$.
				Let $(\Xi_1,\Xi_0,\Upsilon_0)$ be the synthesis operators 
				associated with $(x_1,x_0,u_0)$, 
				and let $\Xi_{1+},\Xi_{0+}\in \mathcal{L}(\mathbb{C}^N,X_+)$ be defined by \eqref{eq:Xi+_def}.
				Then the following statements are equivalent for a fixed $\gamma \in (\gamma_-,1)$:
				\begin{enumerate}
					[label=\upshape(\roman*), leftmargin=*, widest=ii]
					\item 
					The finite data $(x_1,x_0,u_0)$ are
					informative for stabilization on $X_+$  with some decay rate in $ [\gamma_-, \gamma)$.
					
					\item
					There exists
					an operator $\Lambda \in 
					\mathcal{L}(X_+,\mathbb{C}^N)$ such that 
					$\Xi_{0+} \Lambda $ is self-adjoint and 
					the following inequality on $X_+ \times X_+$ holds:
					\[
					\begin{bmatrix}
						\gamma^2 \Xi_{0+} \Lambda -I & \Xi_{1+}\Lambda \\
						(\Xi_{1+}\Lambda)^* & \Xi_{0+} \Lambda
					\end{bmatrix} \geq 0 .
					\]
				\end{enumerate}
			\end{corollary}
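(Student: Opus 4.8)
The plan is to derive Corollary~\ref{coro:ope_ineq_finite} from Theorem~\ref{thm:informative_stabilization_finite} by a discrete-time Lyapunov argument combined with a Schur-complement reformulation, following \cite[Theorem~17]{Waarde2020}. By Theorem~\ref{thm:informative_stabilization_finite}, statement~(i) is equivalent to the conjunction of $\Ran \Xi_{0+} = X_+$ and the existence of a right inverse $\Xi_{0+}^{\dagger} \in \mathcal{L}(X_+,\mathbb{C}^N)$ of $\Xi_{0+}$ such that $F \coloneqq \Xi_{1+}\Xi_{0+}^{\dagger}$ is power stable with some decay rate in $[\gamma_-,\gamma)$. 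Since $\dim X_+ < \infty$, this power-stability condition is equivalent to $\sr(F) < \gamma$: if such a decay rate exists then $\sr(F)$ lies strictly below it, and conversely, if $\sr(F) < \gamma$ then Gelfand's formula provides $M \ge 1$ with $\|F^k\| \le M\gamma_+^k$ for any $\gamma_+ \in [\gamma_-,\gamma)$ chosen larger than $\sr(F)$. Hence it suffices to show that statement~(ii) is equivalent to the existence of a right inverse $\Xi_{0+}^{\dagger}$ of $\Xi_{0+}$ with $\sr(\Xi_{1+}\Xi_{0+}^{\dagger}) < \gamma$, the condition $\Ran \Xi_{0+} = X_+$ being automatic in both directions.

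For (ii)~$\Rightarrow$~(i), given $\Lambda$ as in statement~(ii), the $(1,1)$-block of the displayed positive-semidefinite operator gives $\gamma^2\Xi_{0+}\Lambda - I \ge 0$, so $P \coloneqq \Xi_{0+}\Lambda$ is self-adjoint with $P \ge \gamma^{-2}I > 0$; in particular $P$ is invertible in $\mathcal{L}(X_+)$, which forces $\Ran \Xi_{0+} = X_+$. Setting $\Xi_{0+}^{\dagger} \coloneqq \Lambda P^{-1}$ one gets $\Xi_{0+}\Xi_{0+}^{\dagger} = I$ and $\Xi_{1+}\Lambda = FP$ with $F \coloneqq \Xi_{1+}\Xi_{0+}^{\dagger}$; applying the Schur complement with respect to the $(2,2)$-block $P > 0$ yields $(\gamma^2 P - I) - FPF^* \ge 0$, so $FPF^* < \gamma^2 P$, and the classical discrete-time Lyapunov criterion on the finite-dimensional space $X_+$ gives $\sr(F) < \gamma$.

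For (i)~$\Rightarrow$~(ii), let $\Xi_{0+}^{\dagger}$ be a right inverse of $\Xi_{0+}$ with $F \coloneqq \Xi_{1+}\Xi_{0+}^{\dagger}$ satisfying $\sr(F) < \gamma$, and choose $P > 0$ on $X_+$ with $FPF^* < \gamma^2 P$. Put $\Lambda_0 \coloneqq \Xi_{0+}^{\dagger}P$, so that $\Xi_{0+}\Lambda_0 = P$ is self-adjoint and $\Xi_{1+}\Lambda_0 = FP$; the strict inequality $\gamma^2\Xi_{0+}\Lambda_0 - (\Xi_{1+}\Lambda_0)(\Xi_{0+}\Lambda_0)^{-1}(\Xi_{1+}\Lambda_0)^* > 0$ is, by the Schur complement, equivalent to positive definiteness of the block operator whose $(1,1)$-entry is $\gamma^2\Xi_{0+}\Lambda_0$. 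Since $X_+$ is finite-dimensional, this block operator is bounded below by $\varepsilon I$ for some $\varepsilon > 0$ on $X_+\times X_+$; taking $t \ge 1/\varepsilon$ and $\Lambda \coloneqq t\Lambda_0$ then makes $\Xi_{0+}\Lambda = tP$ self-adjoint and yields the non-strict, $I$-normalized inequality of statement~(ii).

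The Schur-complement manipulations are routine for operators on the finite-dimensional space $X_+$ (and in fact hold for general bounded operators, cf.\ the remark in the proof of Proposition~\ref{prop:equiv_frame3}), and the discrete-time Lyapunov criterion is classical. The only step that needs a little attention is reconciling the strict inequality produced by the Lyapunov argument with the non-strict, $I$-normalized form in statement~(ii); this is precisely what the rescaling $\Lambda = t\Lambda_0$ accomplishes, and it is also the reason the $-I$ term appears in the statement.
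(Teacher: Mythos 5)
Your proof is correct and follows essentially the route the paper intends: the paper omits the argument, citing \cite[Theorem~17]{Waarde2020}, which is precisely the reduction to Theorem~\ref{thm:informative_stabilization_finite} followed by the Schur-complement and discrete-time Lyapunov (Stein inequality) equivalence that you carry out, including the rescaling $\Lambda = t\Lambda_0$ needed to absorb the $-I$ normalization. All steps check out, so your writeup simply supplies the details the paper leaves to the reference.
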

			
			For the structure of stabilizing gains on $X_+$,
			the same result as in Remark~\ref{rem:K_fin_form}
			holds by replacing $\Xi_{0+}^{\dagger}$ with $\Lambda (\Xi_{0+} \Lambda)^{-1}$.
			
			\begin{example}[Example~\ref{ex:heat_ODE} revisited]
				Consider the cascade system in
				Example~\ref{ex:heat_ODE} with true parameters
				\[
				A_v = 
				\begin{bmatrix}
					1 & 0.5 \\ -0.5 & 1
				\end{bmatrix},\quad 
				B_v = 
				\begin{bmatrix}
					2 \\ -1 
				\end{bmatrix},\quad 
				C_v = 
				\begin{bmatrix}
					0.5 &  1
				\end{bmatrix},\quad 
				a = 0.2,\quad \text{and} \quad b = -0.1
				\]
				and known bounds
				$a_0 = 0.1$ and $b_0 = 0$. The sampling period is $\tau = 0.05$.
				The finite data $(x_1,x_0,u_0)$ of length $N$ 
				are obtained from a single trajectory as in Example~\ref{ex:single_trajectory}, where
				the initial state and the input are chosen as
				\[
				v(0) = 
				\begin{bmatrix}
					0 \\ 0
				\end{bmatrix},\quad 
				\zeta(\xi,0) \equiv 1,\quad \text{and} \quad
				u_0(k) \equiv 1.
				\]
				Set $\gamma_- = 0.89$. Then the minimum
				$n_0 \in \mathbb{N}_0$ satisfying the inequality
				\eqref{eq:n0_cond} is $n_0 = 2$. 
				For the data length $N=5$,
				Theorem~\ref{thm:informative_stabilization_finite} and 
				Corollary~\ref{coro:ope_ineq_finite} show that 
				the data $(x_1,x_0,u_0)$ are informativity for 
				stabilization on $X_+$ with decay rate $\gamma = 0.9$.
				Corollary~\ref{coro:ope_ineq_finite} further yields a stabilizing gain 
				$K = K_+ \Pi$, where
				\[
				K_+ \coloneqq
				\begin{bmatrix}
					-0.4615 & 0.6464 & 1.6387 & -0.1597
				\end{bmatrix}.
				\] 
				All linear matrix inequalities were solved using 
				MATLAB R2025b with YALMIP~\cite{Lofberg2004} and 
				MOSEK~\cite{MOSEK2025}.
				\hspace*{\fill} $\triangle$ 
			\end{example}

			\section{Conclusion}
			\label{sec:conclusion}
			This paper investigated 
			data-driven stabilization for discrete-time
			infinite-dimensional systems.
			For single-input systems,
			we established a 
			necessary and sufficient condition for 
			infinite-length data to be 
			informative for stabilization in the noise-free setting.
			If the state sequence forms a frame and 
			the noise is small relative to the measured data, then
			this condition still implies stabilization 
			in the presence of noise.
			Combining finite-length data with 
			prior structural knowledge of the true system, 
			we derived a necessary and sufficient condition 
			for data informativity.
			Future work will examine informativity for 
			other system properties, including dissipativity and 
			admissibility.

\end{document}